\renewcommand*{\eqref}[1]{%
  \hyperref[{#1}]{\textup{\tagform@{\ref*{#1}}}}%
}
\definecolor{kerstin}{RGB}{25,180,60}
\definecolor{dim}{RGB}{50,175,31}
\definecolor{flu}{RGB}{114,41,196}
\newcommand\centre[4][below]{\node (#3) at #2 [circle,minimum size=0.5em,inner sep=0pt,thin,fill,solid] {}; \node [#1=0.1em] at (#3) {#4};}
\newcounter{cst}
\newcommand{\ctel}[1]{K_{\refstepcounter{cst}\label{#1}\thecst}}
\newcommand{\cter}[1]{K_{\ref*{#1}}} 
\newcommand{\re}{\mathbb{R}}
\newcommand{\R}{\mathbb{R}}
\newcommand{\na}{\mathbb{N}}
\newcommand{\E}{\mathbb{E}}
\newcommand{\diver}{\operatorname{div}_x}
\newcommand{\erwb}{\mathbb{E}\left[}
\newcommand{\erwe}{\right]}
\newcommand{\halbe}{\frac{1}{2}}
\newcommand{\Tau}{\mathcal{T}}
\newcommand{\edges}{\mathcal{E}}
\newcommand{\dkl}{d_{K|L}}
\newcommand{\edgesint}{\mathcal{E}_{\operatorname{int}}}
\newcommand{\edgesext}{\mathcal{E}_{\operatorname{ext}}}
\newcommand{\uhnl}{u_{h,N}^l}
\newcommand{\uhnr}{u_{h,N}^r}
\newcommand{\lzlambda}{{L^2(\Lambda)}}
\newcommand{\whnl}{w_{h,N}^l}
\newcommand{\whnr}{w_{h,N}^r}
\newcommand{\erww}[1]{\mathbb{E}\left[{#1}\right]}
\newcommand{\reg}{\operatorname{reg}(\Tau)}
\newcommand{\upe}{u_{p,\epsilon}}
\newcommand{\upenk}{u_{p,\epsilon}^{n,K}}
\newcommand{\upenpk}{u_{p,\epsilon}^{n+1,K}}
\newcommand{\upenpl}{u_{p,\epsilon}^{n+1,L}}
\newcommand{\tn}{t_n}
\newcommand{\tnp}{t_{n+1}}
\newcommand{\dlt}{\Delta t}
\newcommand{\emct}{e^{-ct}}
\newcommand{\emcs}{e^{-cs}}
\newcommand{\unk}{u^n_K}
\newcommand{\unpk}{u^{n+1}_K}
\newcommand{\unps}{u^{n+1}_\sigma}
\newcommand{\ukpk}{u^{k+1}_K}
\newcommand{\ukpl}{u^{k+1}_L}
\newcommand{\unpl}{u^{n+1}_L}
\newcommand{\ve}{\mathbf{v}}
\newcommand{\ms}{m_{\sigma}}
\newcommand{\nks}{\mathbf{ n}_{K,\sigma}}
\newcommand{\di}{\displaystyle}
\newcommand{\vksnp}{v^{n+1}_{K,\sigma}}
\newcommand{\vkskp}{v^{k+1}_{K,\sigma}}
\newtheorem{defi}{Definition}[section]
\newtheorem{lem}[defi]{Lemma}
\newtheorem{teo}[defi]{Theorem}
\newtheorem{prop}[defi]{Proposition}
\theoremstyle{remark}
\newtheorem{remark}[defi]{Remark}
\numberwithin{equation}{section}
\title{On a finite-volume approximation of a diffusion-convection equation with a multiplicative stochastic force}
\author{Caroline Bauzet\footnotemark[1], \and Kerstin Schmitz\footnotemark[2], \and Aleksandra Zimmermann\footnotemark[2]}
\date{\today}
\begin{document}
\maketitle

\begin{abstract}We address an original approach for the convergence analysis of a finite-volume scheme for the approximation of a stochastic diffusion-convection equation with multiplicative noise in a bounded domain of $\R^d$ (with $d=2$ or $3$) and with homogeneous Neumann boundary conditions. The idea behind our approach is to avoid using the stochastic compactness method. We study a numerical scheme that is semi-implicit in time and in which the convection and the diffusion terms are respectively approximated by means of an upwind scheme and the so called two-point flux approximation scheme (TPFA).
By adapting well-known methods for the time discretization of stochastic PDEs and combining them with deterministic techniques applied to spatial discretization, we show strong convergence of our scheme towards the unique variational solution of the continuous problem in $L^p\left(0,T;L^2(\Omega;L^2(\Lambda))\right)$, for any finite $p\geq 1$.
\quad\\

\noindent\textbf{Keywords:} Stochastic non-linear parabolic equation $\bullet$ Multiplicative Lipschitz noise $\bullet$ Finite-volume method $\bullet$ Upwind scheme $\bullet$ Diffusion-convection equation $\bullet$ Variational approach $\bullet$ Convergence analysis.\\
\quad\\
\textbf{Mathematics Subject Classification (2020):} 60H15 $\bullet$ 35K05 $\bullet$ 65M08.
\end{abstract}
\footnotetext[1]{Aix Marseille Univ, CNRS, Centrale Marseille, LMA UMR 7031, Marseille, France, caroline.bauzet@univ-amu.fr}
\footnotetext[2]{TU Clausthal, Institut f\"ur Mathematik, Clausthal-Zellerfeld, Germany, kerstin.schmitz@tu-clausthal.de,
aleksandra.zimmermann@tu-clausthal.de}

\section{Introduction}
Let $\Lambda$ be a bounded, open, connected, and polygonal subset of $\R^d$ (with $d=2$ or $3$).
Moreover let $(\Omega,\mathcal{A},\mathds{P})$ be a probability space endowed with a right-continuous, complete filtration $(\mathcal{F}_t)_{t\geq 0}$ and let $(W(t))_{t\geq 0}$ be a standard, one-dimensional Brownian motion with respect to $(\mathcal{F}_t)_{t\geq 0}$ on $(\Omega,\mathcal{A},\mathds{P})$.\\
For $T>0$, we consider the following non-linear parabolic problem forced by a multiplicative stochastic noise:
\begin{align}\label{equation}
\begin{aligned}
du-\Delta u\,dt + \diver (\mathbf{v}u)\, dt &=g(u)\,dW(t)+\beta(u)\,dt, &&\text{ in }\Omega\times(0,T)\times\Lambda;\\
u(0,\cdot)&=u_0, &&\text{ in } \Omega\times\Lambda;\\
\nabla u\cdot \mathbf{n}&=0, &&\text{ on }\Omega\times(0,T)\times\partial\Lambda;
\end{aligned}
\end{align}
where $\diver$ is the divergence operator with respect to the space variable and $\mathbf{n}$ denotes the unit normal vector to $\partial\Lambda$ outward to $\Lambda$.
We assume the following hypotheses on the data:
\begin{itemize}
\item[$\mathscr{A}_1$:] $u_0\in L^2(\Omega;L^2(\Lambda))$ is $\mathcal{F}_0$-measurable.
\item[$\mathscr{A}_2$:] $g:\re\rightarrow\re$ is a Lipschitz-continuous function.
\item[$\mathscr{A}_3$:] $\beta : \R\rightarrow \R$ is a Lipschitz-continuous function with $\beta(0)=0$.
\item[$\mathscr{A}_4$:] $\mathbf{v}\in \mathscr{C}^1([0,T]\times \overline{\Lambda}; \R^d)$,
 $\diver(\mathbf{v}(t,x))=0$ for all $(t,x)\in [0,T]\times \Lambda$ and $\mathbf{v}(t,x)\cdot\mathbf{n}(x)=0$ for all $(t,x)\in [0,T]\times \partial\Lambda$.
\end{itemize}

\subsection{Notations}
Let us introduce some notations and make precise the functional setting.
\begin{itemize}
\item[$\bullet$] $|x|$ denotes the euclidean norm of $x$ in $\R^d$ and $x\cdot y$ the usual scalar product of $x$ and $y$ in $\R^d$.
\item By abusing the previous notation, $|\Lambda|$ denotes the $d$-dimensional Lebesgue measure of $\Lambda$.
\item[$\bullet$] For $p\in\{1,d,d+1\}$, $\|\cdot\|_\infty$ denotes the $L^\infty(\re^p)$-norm.
\item[$\bullet$] $L_{\beta}\geq 0$ the Lipschitz constant of $\beta$.
\item[$\bullet$] $L_g\geq 0$ the Lipschitz constant of $g$.
\item[$\bullet$] $C_{L_g}\geq 0$ a constant only depending on $L_g$ and $g(0)$, satisfying for all $r\in\mathbb{R}$ 
\begin{equation}\label{H3}
|g(r)|^2\leq C_{L_g}(1+|r|^2).
\end{equation}
\item[$\bullet$] $\E[\cdot]$ denotes the expectation, \textit{i.e.} the integral over $\Omega$ with respect to the probability measure $\mathds{P}$.
\item[$\bullet$] For a given separable Banach space $X$, we denote by $L^2_{\mathcal{P}_T}\big(\Omega\times(0,T);X\big)$ the space of the predictable $X$-valued processes (\cite{DPZ14} p.94 or \cite{PrevotRockner} p.27). This space is the space $L^2\big(\Omega\times(0,T);X\big)$ for the product measure $d\mathds{P}\otimes dt$ on the predictable $\sigma$-field $\mathcal{P}_T$ (\textit{i.e.} the $\sigma$-field generated by the sets $\mathcal{F}_0\times \{0\}$ and the rectangles $A\times (s,t]$, for any $s,t\in[0,T]$ with $s\leq t$ and $A\in \mathcal{F}_s$).\\ 
For $X=L^2(\Lambda)$, one has $L^2_{\mathcal{P}_T}\big(\Omega\times(0,T);L^2(\Lambda)\big)\subset L^2\big(\Omega\times(0,T);L^2(\Lambda)\big)$.
\end{itemize}
\begin{remark} Note that the existence of the constant $C_{L_g}$ is given by Assumption $\mathscr{A}_2$. It allows us to apply our scheme also for square integrable, additive noise with appropriate measurability assumptions.
\end{remark}

\subsection{Concept of solution and main result}
The theoretical framework associated with Problem~\eqref{equation} is well established in the literature. Indeed, we can find many existence and uniqueness results for various concepts of solutions associated with this problem such as mild solutions, variational solutions, pathwise solutions and weak solutions, see, e.g., \cite{DPZ14,LR,PardouxBook}. In the present paper we will be interested in the concept of solution as defined below, which we will call a variational solution: 
\begin{defi}\label{solution} A stochastic process $u$ in $L^2_{\mathcal{P}_T}\big(\Omega\times(0,T);H^1(\Lambda)\big)$ is 
a variational solution to Problem \eqref{equation} if it belongs to  $L^2(\Omega;\mathscr{C}([0,T];L^2(\Lambda)))$
and satisfies, for all $t\in[0,T]$,
\begin{align*}
u(t)-u_0-\int_0^t \Delta u(s)\,ds+\int_0^t \diver(\mathbf{v}(s,\cdot)u(s))\, ds=\int_0^t g(u(s))\,dW(s)+\int_0^t \beta(u(s))\,ds
\end{align*}
in $L^2(\Lambda)$ and $\mathds{P}$-a.s. in $\Omega$, where $\Delta$ denotes the Laplace operator on $H^1(\Lambda)$ associated with the formal Neumann boundary condition.
\end{defi}
\begin{remark}\label{240202_01}
\textit{A priori}, we have the predictability of $u$ with values in $L^2(\Lambda)$. It is a direct consequence of, e.g., \cite[Corollary 1.1.8]{HNVW16}
that we may \textit{a posteriori} conclude $u\in L^2_{\mathcal{P}_T}\big(\Omega\times(0,T);H^1(\Lambda)\big)$.
\end{remark}

Existence, uniqueness and regularity of this variational solution is well-known in the literature, see, e.g., \cite{KryRoz81,LR,PardouxThese}.
The main result of this paper is to propose a finite-volume scheme for the approximation of such a variational solution and to show its stochastically strong convergence by passing to the limit with respect to the time and space discretization parameters, as stated in the theorem below: 
\begin{teo} \label{mainresult}
Assume that hypotheses $\mathscr{A}_1$ to $\mathscr{A}_4$ hold. 
Let $(\Tau_m)_{m\in \mathbb{N}}$ be a sequence of admissible finite-volume meshes of $\Lambda$ in the sense of Definition \ref{defmesh} such that the mesh size $h_m$ tends to $0$ and let $(N_m)_{m\in \mathbb{N}}\subset \mathbb{N}^{\star}$ be a sequence of positive numbers which tends to infinity.
For a fixed $m\in\mathbb{N}$, let $u^r_{h_m,N_m}$ and $u^l_{h_m,N_m}$ be respectively the right and left in time finite-volume approximations defined by \eqref{eq:notation_wh}, \eqref{eq:def_u0}-\eqref{equationapprox} with $\Tau =\Tau_m$ and $N=N_m$. Then $(u^r_{h_m,N_m})_{m\in \mathbb{N}}$ and $(u^l_{h_m,N_m})_{m\in \mathbb{N}}$ converge strongly in $L^p\left(0,T;L^2(\Omega;L^2(\Lambda))\right)$, for any finite $p\geq 1$,
to the variational solution of Problem \eqref{equation} in the sense of Definition~\ref{solution}.
\end{teo}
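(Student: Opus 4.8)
The plan is to follow the by-now classical route for convergence of discretizations of stochastic parabolic problems: establish uniform discrete energy estimates, extract a subsequence via a stochastic compactness (Skorokhod) argument, identify the limit as a variational solution of \eqref{equation}, and then invoke pathwise uniqueness together with the Gy\"ongy--Krylov principle to conclude that the whole sequence converges, on the original probability space, in the required norm. For the a priori estimates I would test the scheme \eqref{equationapprox} with the discrete unknown $\unpk$, sum over the control volumes $K\in\Tau_m$ and over the time steps, and use the identity $a(a-b)=\tfrac12(a^2-b^2+(a-b)^2)$ for the discrete time derivative; discrete integration by parts turns the diffusion contribution into the discrete $H^1$ seminorm, while the upwind convective contribution is controlled using $\diver\mathbf v=0$ and $\mathbf v\cdot\mathbf n=0$ (its numerical-diffusion part being nonnegative after reorganization). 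The delicate point is the stochastic term $\sum_K m_K\,g(\unk)\,\Delta W^n\,\unpk$: because the scheme is semi-implicit, $\unk$ is $\mathcal F_{t_n}$-measurable hence independent of $\Delta W^n$, so writing $\unpk=\unk+(\unpk-\unk)$ kills the first cross term in expectation and the second is absorbed through the scheme itself together with $\E|\Delta W^n|^2=\Delta t$ and the linear growth \eqref{H3}. A discrete Gronwall lemma then gives, uniformly in $m$,
\begin{align*}
\E\Big[\max_{0\le n\le N_m}\|u^{n}_{\Tau_m}\|_{L^2(\Lambda)}^2\Big]+\E\Big[\sum_{n=0}^{N_m-1}\Delta t_m\,|u^{n+1}_{\Tau_m}|_{1,\Tau_m}^2\Big]\le C,
\end{align*}
and, running the same argument with higher powers, a bound on $\E\big[\max_n\|u^n_{\Tau_m}\|_{L^2(\Lambda)}^{p}\big]$ for some $p>2$. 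Exploiting the energy structure once more (testing the increment equation with $\unpk-\unk$) one obtains $\E\sum_n\|\unpk-\unk\|_{L^2(\Lambda)}^2\le C$, whence $\|\uhnrm-\uhnlm\|_{L^2(\Omega\times(0,T);L^2(\Lambda))}^2\le C\,\Delta t_m\to0$; the two reconstructions therefore have the same limit. A fractional-in-time estimate (the stochastic part being responsible for a loss of derivative, so only a $W^{\alpha,2}$-bound for $\alpha<1/2$) completes the list.

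Next I would combine the discrete $H^1$ bound with a discrete compactness argument for admissible TPFA meshes (Kolmogorov--Riesz / Rellich type) and the fractional time regularity, in the spirit of Aubin--Lions--Simon, to show that the laws of the triple $(\uhnrm,\uhnlm,W)$ are tight on $L^2(0,T;L^2(\Lambda))\times L^2(0,T;L^2(\Lambda))\times\mathscr C([0,T])$. By the Skorokhod representation theorem there exist, on a new probability space, random variables $(\tilde u^r_m,\tilde u^l_m,\tilde W_m)$ with the same joint laws and converging almost surely to some $(\tilde u^r,\tilde u^l,\tilde W)$; one checks that $\tilde W_m$ and $\tilde W$ are Brownian motions for the natural filtrations, and $\tilde u^r=\tilde u^l=:\tilde u$ by the previous step.

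On the new stochastic basis I would then pass to the limit in the (copied) scheme. The weak limit of the reconstructed discrete gradients is identified with $\nabla\tilde u$ by the standard TPFA consistency argument, which simultaneously shows $\tilde u\in L^2(\tilde\Omega;L^2(0,T;H^1(\Lambda)))$; the upwind convective flux converges to $\diver(\mathbf v\tilde u)$, its numerical-diffusion remainder vanishing thanks to the $\Delta t_m$-weighted control of the discrete $H^1$ seminorm; the term $\beta(\cdot)$ passes to the limit by continuity of $\beta$ (using $\mathscr A_3$) and dominated convergence; and $\int_0^\cdot g(\tilde u_m)\,d\tilde W_m\to\int_0^\cdot g(\tilde u)\,d\tilde W$ by the stability of stochastic integrals under the almost sure convergence obtained above together with the Lipschitz assumption $\mathscr A_2$. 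Hence $(\tilde u,\tilde W)$ is a variational solution of \eqref{equation} in the sense of Definition~\ref{solution}.

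Finally, pathwise uniqueness for \eqref{equation} (recalled just after Definition~\ref{solution}, and provable by a Gronwall argument on $\E\|u^1(t)-u^2(t)\|_{L^2(\Lambda)}^2$ using the monotonicity of $-\Delta$, the skew-symmetry of the divergence-free convection, and the Lipschitz bounds on $g,\beta$) together with the Gy\"ongy--Krylov characterization of convergence in probability yields that $\uhnrm$ and $\uhnlm$ converge in probability in $L^2(0,T;L^2(\Lambda))$ to the unique variational solution $u$ on the original probability space; the uniform bound on $\E\big[\|\uhnrm\|_{L^2(0,T;L^2(\Lambda))}^{p}\big]$ with $p>2$ provides the uniform integrability needed to upgrade this to convergence in $L^2(\Omega;L^2(0,T;L^2(\Lambda)))$, which is the claim. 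I expect the two main obstacles to be the correct treatment of the stochastic increment in the discrete energy estimate — exploiting predictability/independence of $\Delta W^n$ while still closing the estimate in the presence of the implicit diffusion — and, in the identification step, the simultaneous passage to the limit in the TPFA diffusion flux, the upwind convective flux, and the stochastic integral, in particular identifying the limit of the discrete gradients and controlling the upwind stabilization on general admissible meshes.
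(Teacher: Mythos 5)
Your proposal is a coherent program, but it is not the route the paper takes; the authors explicitly set out to avoid Prokhorov/Skorokhod, martingale solutions and the Gy\"ongy--Krylov argument. Their proof stays on the original probability space throughout: the energy estimates of Proposition \ref{bounds} give only \emph{weak} convergence (up to subsequences) of $(u^r_{h_m,N_m})_m$, $(u^l_{h_m,N_m})_m$ and of the nonlinear terms $(g(u^l_{h_m,N_m}))_m$, $(\beta(u^r_{h_m,N_m}))_m$ towards $u$, $g_u$, $\beta_u$; passing to the limit in the scheme (Proposition \ref{PTTL}) shows that $u$ solves the It\^o equation with the unidentified weak limits $g_u$, $\beta_u$ in place of $g(u)$, $\beta(u)$; and the identification $g_u=g(u)$, $\beta_u=\beta(u)$ is then obtained \emph{simultaneously} with the strong $L^2(\Omega;L^2(0,T;L^2(\Lambda)))$ convergence by comparing an exponentially weighted (weight $e^{-ct}$ with $c$ chosen large depending on $L_g$ and $L_\beta$) discrete energy inequality with the continuous stochastic energy equality of Lemma \ref{lemenergy}, together with the $\liminf$ inequality of Lemma \ref{keylemma} for the discrete $H^1$ seminorms. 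This buys exactly what your route has to work harder for: no tightness argument, no change of probability space, no pathwise-uniqueness/Gy\"ongy--Krylov step, and strong convergence at the exponent $p=2$.

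Two steps of your program are genuine gaps as written. First, the final upgrade from convergence in probability to convergence in $L^2(\Omega;L^2(0,T;L^2(\Lambda)))$ requires uniform integrability of $\|u^r_{h_m,N_m}\|^2_{L^2(0,T;L^2(\Lambda))}$, i.e.\ a moment bound of order $p>2$; you assert this follows by ``running the same argument with higher powers'', but for the semi-implicit scheme the cross term $\sum_K m_K\,g(u_K^n)(W^{n+1}-W^n)u_K^{n+1}$ raised to higher powers is no longer handled by independence alone and requires a discrete Burkholder--Davis--Gundy argument that you do not supply; this is precisely the point at which the same strategy, applied in \cite{BNSZ22} to the simpler case $\mathbf v=\mathbf 0$, $\beta=0$, only delivered convergence in $L^p(\Omega;L^2(0,T;L^2(\Lambda)))$ for $1\le p<2$. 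Second, the tightness step (discrete Rellich compactness on admissible TPFA meshes combined with a fractional-in-time $W^{\alpha,2}$ estimate) is stated but not proved, and the adaptedness of the Skorokhod copies $\tilde u_m$ to the filtration of $\tilde W_m$, which is needed to make sense of and pass to the limit in the stochastic integral, is glossed over. Until these are filled in, your argument does not establish the theorem as stated.
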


\begin{remark}As a consequence of the isomorphism between $L^2(\Omega\times (0,T) ;L^2(\Lambda))$ and $L^2(\Omega ;L^2(0,T ;L^2(\Lambda))$ and $L^2(0,T ;L^2(\Omega ;L^2(\Lambda))$, see, e.g., \cite[Corollary~1.2.23 and Proposition~1.2.24]{HNVW16},
from the convergence result of Theorem \ref{mainresult} it follows that the convergence also holds in $L^2(\Omega;L^2(0,T;L^2(\Lambda)))$.
\end{remark}

\subsection{State of the art}
As mentioned in a previous paper \cite{BNSZ22} in collaboration with {\sc F. Nabet}, the study of numerical schemes for stochastic partial differential equations (SPDEs) has been a very fashionable subject in recent decades and for this reason, an extensive literature on this topic is available. We refer the interested reader to \cite{ACQS20,DP09,OPW20} for a general overview and associated references.\\

If we focus on the theoretical study of parabolic SPDEs with a non-linear first order operator, note that the variational techniques developed in \cite{PardouxThese,KryRoz81,LR} can be applied whereas the semigroup approach is not available, and therefore the use of mild solutions is out of range. Note that for the theoretical study of \eqref{equation} in $\R^d$ ($d\geq 1$) instead of a bounded domain, we can refer to \cite{V08} or the appendix of \cite{BVW12}.\\

Concerning the numerical analysis of these variational solutions, it is clear that in the past the use of finite-element methods has been favored and extensively employed (we refer to \cite{Prohl,BHL21} for a thorough exposition of existing papers).
In recent years, one is more and more interested in numerical approximations which preserve the specific structure of the underlying equations. In this way, important physical properties are reflected at the numerical level and thus improve the robustness and the stability of numerical methods. As pointed out in \cite{Droniou14}, amongst the numerous families of numerical methods, e.g., finite difference, finite element, discontinuous Galerkin, Gradient Discretization Method (GDM), finite-volume schemes are favourable methods for applications in which the conservation of quantities is important. For finite-volume schemes, balance and local conservativity are the leading principles in the construction of numerical fluxes. The TPFA scheme is a cell-centered finite-volume scheme which is particularly cheap to implement, since its matrices are very sparse.\\
There has been growing interest in the use of volume-finite schemes for the spatial discretization of stochastic PDEs. Results have been firstly derived for the approximation by monotone schemes of hyperbolic problems perturbed by multiplicative noise, known as stochastic first order scalar conservation laws in the literature.
Let us cite in the chronological order the following contributions: \cite{BCG162,BCG161,BCG17,FGH18,M18,DV18,BCC20,DV20}. Then, the parabolic case has been more recently investigated in collaboration with {\sc F. Nabet} in \cite{BN20} and \cite{BNSZ22}.
We proved the convergence of a TPFA scheme for the stochastic heat equation with linear and non-linear multiplicative noise, respectively. By adapting the stochastic compactness method based on Skorokhod's representation theorem, we were able to obtain the convergence of our approximation in $L^p(\Omega;L^2(0,T;L^2(\Lambda)))$ for any $p\in [1,2)$, towards the unique variational solution of the associated problem.
Let us additionally mention the recent work of \cite{DGL22}, where the authors investigated the convergence of a large class of numerical schemes for the stochastic transient Leray-Lions equation with multiplicative noise. They used a generic GDM framework which covers various schemes, including finite elements, some finite-volume methods, discontinuous Galerkin, mass-lumped finite elements but does not include, strictly speaking, the TPFA scheme. Using the same framework in a recent preprint \cite{DKL23}, the authors showed a result of convergence of these GDM schemes to a martingale solution of the Stefan's problem perturbed by a multiplicative noise.

\subsection{Originality of the study and outline of the paper}
In our present work, our aim is to fill the gap left by previous authors and to propose a convergence result for a space-time discretization of finite-volume type for the diffusion-convection equation \eqref{equation} with a non-linear source term, a linear, first-order convection term, forced by a multiplicative stochastic noise, and subject to homogeneous Neumann boundary conditions. The added value comparing to existing results is fourfold: 
\begin{itemize}
\item Firstly, the taking into account of a convection term $\diver(\ve u)$ is very interesting from a modeling point of view and paves the way towards many extensions such as the consideration of a non-linear flux term of the type $\mathbf{v}f(u)$ in combination with a porous medium operator of the type $\Delta\varphi(u)$, with $f,\varphi:\R\rightarrow \R$ Lispchitz-continuous, and $\varphi$ nondecreasing.
\item Secondly, the fact that we avoid (for the passage to the limit in the non-linear terms) the use of the stochastic compactness method which involves technical tools from the stochastic framework (such as the theorem of Prokhorov, Skorokhod's representation theorem, the concept of martingale solutions, and the Gy\"ongy-Krylov argument of pathwise uniqueness) as we did in our previous work \cite{BNSZ22} for the stochastic heat equation (which corresponds to \eqref{equation} with $\mathbf{ v}=\mathbf{ 0}$ and $\beta=0$). This allows us to propose a more general convergence result which is at the same time accessible without deeper knowledge of stochastic analysis.
\item Thirdly, we have weakened the hypothesis $u_0\in L^2(\Omega, H^1(\Lambda))$ from \cite{BNSZ22} by assuming that $u_0$ is an element of $L^2(\Omega, L^2(\Lambda))$. This is mainly due to the fact that the method used in the present paper requires drastically fewer stability estimates on approximations than in \cite{BNSZ22}.
\item Fourthly, the extension of the convergence analysis to the dimensional case $d=3$ comparing to \cite{BNSZ22}, where we restricted ourselves to $d=2$.
\end{itemize}

The main difficulty of the present study is to choose suitable tools of the finite-volume framework compatible with the stochastic one and the restrictions brought by the multiplicative noise. Particularly, we will see that the main challenge will be the identification of weak limits coming from the discretization of the non-linear terms $g(u)$ and $\beta(u)$. 
\\

Our contribution is organized as follows. Section \ref{sectiontwo} is devoted to the introduction of the finite-volume framework: definition of the finite-volume mesh employed for the discretization of $\Lambda$, associated notations, definition of discrete norms and construction of the right and left finite-volume approximations denoted respectively by $(u^r_{h,N})_{h,N}$ and $(u^l_{h,N})_{h,N}$. In Section \ref{estimates}, we will derive stability estimates in suitable functional spaces satisfied by the sequences $(u^r_{h,N})_{h,N}$, $(u^l_{h,N})_{h,N}$, $(g(u^l_{h,N}))_{h,N}$ and $(\beta(u^r_{h,N}))_{h,N}$. These estimates will allow us in Section \ref{ConvFVscheme} to extract weakly converging subsequences towards elements of $L^2_{\mathcal{P}_T}\big(\Omega\times(0,T);L^2(\Lambda)\big)$, denoted by $u$, $g_u$ and $\beta_u$. The remaining part will be devoted to the passage to the limit in the numerical scheme and to the identification of $g_u$ and $\beta_u$ as $g(u)$ and $\beta(u)$, respectively. This will be achieved through the following five steps, which combine in an original way techniques commonly used for the time discretization of stochastic PDEs on one hand, and space approximation of deterministic PDEs on the other hand.
\begin{description}
\item[Step 1:] Proving that the joint weak limit $u$ of $(u^r_{h,N})_{h,N}$ and $(u^l_{h,N})_{h,N}$ is an $L^2(\Lambda)$-valued It\^o stochastic process of the form 
$$du+\big(\diver(\mathbf{v}u)-\Delta u-\beta_u\big)dt=g_udW.$$ 
\item[Step 2:] Deriving thanks to It\^o's formula a stochastic energy equality satisfied by the weak limit $u$, by employing in particular an exponential weighted in time norm with a parameter $c>0$. 
\item[Step 3:] This provides in particular a crucial inequality between the $L^2(\Lambda)$-norm of $\nabla u$ and the discrete $H^1(\Lambda)$-seminorm of the sequence $(u^r_{h,N})_{h,N}$.
\item[Step 4:] Obtaining a discrete stochastic energy inequality (using again an exponential weighted in time norm with a parameter $c>0$) satisfied by the quadruple $$\big(u^r_{h,N}, u^l_{h,N}, g(u^l_{h,N}), \beta(u^r_{h,N})\big)_{h,N},$$ as a discrete counterpart to Step 2.
\item[Step 5:] Combining the continuous and discrete energy estimates obtained in Step 2 and Step 4 with the key result derived in Step 3 and making a clever choice of the weight parameter $c>0$ will allow us to do the identifications $g_u=g(u)$ and $\beta_u=\beta(u)$, and therefore to complete the proof of Theorem \ref{mainresult}.
\end{description}

\section{The finite-volume framework}\label{sectiontwo}
The following subsections \ref{mesh}, \ref{discretenotation}, \ref{dnadg} contain all the definitions and notations related to finite-volume framework. 

\subsection{Admissible finite-volume meshes and notations}\label{mesh}
In order to perform a finite-volume approximation of the variational solution of Problem~\eqref{equation} on $[0,T]\times \Lambda$ we need first of all to set a choice for the temporal and spatial discretization. For the time discretization, let $N\in \mathbb{N}^{\star}$ be given. We define the fixed time step $\Delta t=\frac{T}{N}$ and divide the interval $[0,T]$ in $0=t_0<t_1<...<t_N=T$ equidistantly with $\tn=n \Delta t$ for all $n\in \{0, ..., N-1\}$.
For the space discretization, we refer to \cite{gal} and consider finite-volume admissible meshes in the sense of the following definition.

\begin{defi}\label{defmesh} (Admissible finite-volume meshes) An admissible finite-volume mesh of $\Lambda$, denoted by $\Tau$, is given by a family of \enquote{control volumes}, which are open polygonal convex subsets of $\Lambda$, a family of subsets of $\overline{\Lambda}$ contained in hyperplanes of $\R^d$, denoted by $\mathcal{E}$ (these are the edges for $d=2$ or sides for $d=3$ of the control volumes), with strictly positive $(d-1)$-dimensional Lebesgue measure, and a family of points of $\Lambda$ denoted by $\mathcal{P}$ satisfying the following properties\footnotemark[2]\footnotetext[2]{In fact, we shall denote, somewhat incorrectly, by $\mathcal{T}$ the family of control volumes.}
\begin{itemize}
\item $\overline{\Lambda}=\bigcup_{K\in\Tau}\overline{K}$.
\item For any $K\in \Tau$, there exists a subset $\mathcal{E}_K$ of $\mathcal{E}$ such that $\partial K=\overline{K}\setminus K=\bigcup_{\sigma\in\mathcal{E}_K}\overline{\sigma}$. Furthermore, $\mathcal{E}=\bigcup_{K\in \Tau}\mathcal{E}_K$.
\item For any $K,L\in\Tau$, with $K\neq L$ either the $(d-1)$-dimensional Lebesgue measure of $\overline{K}\cap \overline{L}$ is $0$ or $\overline{K}\cap \overline{L}=\overline{\sigma}$, for some $\sigma\in \mathcal{E}$, which will then be denoted by $K|L$.
\item The family $\mathcal{P}=(x_K)_{K\in \Tau}$ is such that for any $K\in \Tau$, $x_K\in \overline{K}$ and, if $\sigma=K|L$, it is assumed that $x_K\neq x_L$, and that the straight line going through $x_K$ and $x_L$ is orthogonal to $K|L$.
\end{itemize}
\end{defi}

Once an admissible finite-volume mesh $\Tau$ of $\Lambda$ is fixed, we will use the following notations.\\
\quad\\
\textbf{Notations.}
\begin{itemize}
\item $h=\operatorname{size}(\Tau)=\sup\{\operatorname{diam}(K): K\in\Tau\}$, the mesh size.
\item $d_h\in\mathbb{N}$, the number of control volumes $K\in\Tau$ with $h=\operatorname{size}(\Tau)$.
\item $\mathcal{E}_{\operatorname{int}}:=\{\sigma\in\mathcal{E}:\sigma\nsubseteq \partial\Lambda\}$  and $\mathcal{E}_{\operatorname{ext}}:=\{\sigma\in\mathcal{E}:\sigma\subseteq \partial\Lambda\}$.

\item For any $K\in\Tau$, $m_K$ denotes the $d$-dimensional Lebesgue measure  of $K$ (it is the area when $d=2$ and the volume when $d=3$), and for any $\sigma\in \mathcal{E}_{\operatorname{int}}$, $m_\sigma$ denotes the $(d-1)$-dimensional Lebesgue measure  of $\sigma$.
\item For any $K\in\Tau$, $\mathbf{n}_K$ denotes the unit normal vector to $\partial K$ outward to $K$, and for any $\sigma \in \mathcal{E}_K$, the unit vector on the edge $\sigma$ pointing out of $K$ is denoted by $\mathbf{n}_{K,\sigma}$.

\item  For any $\sigma=K|L\in\edgesint$, $d_{K|L}$ denotes the Euclidean distance between $x_K$ and $x_L$ (which is positive).

\end{itemize}

\begin{figure}[htbp!]
\centering
\begin{tikzpicture}[scale=2]

  \clip (-1.2,-0.6) rectangle (1.8,1.3);

  \node[rectangle,fill] (A) at (-1,0.6) {};
  \node[rectangle,fill] (B) at (0,1.2) {};
  \node[rectangle,fill] (C) at (0,-0.2) {};
  \node[rectangle,fill] (D) at (1.5,0.3) {};

  \centre[above right]{(-0.6,0.5)}{xK}{$x_K$};
  \centre[above left]{(0.9,0.5)}{xL}{$x_L$};
  
  \draw[thick] (B)--(C) node [pos=0.7,right] {$\sigma=$\small{$K|L$}};

  \draw[thin,opacity=0.5] (A) -- (B) -- (C) -- (A) ;
  \draw[thin,opacity=0.5] (D) -- (B) -- (C) -- (D);

  \draw[dashed] (xK) -- (xL);
  
  \coordinate (KK) at ($(xK)!0.65!-90:(xL)$);
  \coordinate (LL) at ($(xL)!0.65!90:(xK)$);

  \draw[dotted,thin] (xK) -- (KK);
  \draw[dotted,thin] (xL) -- (LL);

  \draw[|<->|] (KK) -- (LL) node [midway,fill=white,sloped] {$\dkl$};
 
  \coordinate (KAB) at ($(A)!(xK)!(B)$);
  \coordinate (KAC) at ($(A)!(xK)!(C)$);

  \coordinate (LDB) at ($(D)!(xL)!(B)$);
  \coordinate (LDC) at ($(D)!(xL)!(C)$);

  \draw[dashed] (xK) -- ($(xK)!3!(KAB)$);
  \draw[dashed] (xK) -- ($(xK)!3!(KAC)$);

  \draw[dashed] (xL) -- ($(xL)!3!(LDB)$);
  \draw[dashed] (xL) -- ($(xL)!3!(LDC)$);

  \begin{scope}[on background layer]   
    \draw (0,0.5) rectangle ++ (0.1,-0.1);
  \end{scope}
  
  \coordinate (nkl) at ($(B)!0.35!(C)$);
  \draw[->,>=latex] (nkl) -- ($(nkl)!0.3cm!90:(C)$) node[above] {$\mathbf{n}_{K,\sigma}$};

\end{tikzpicture}
\caption{Notations of the mesh $\mathcal T$ associated with $\Lambda$\label{fig:notation_mesh} for $d=2$}
\end{figure}
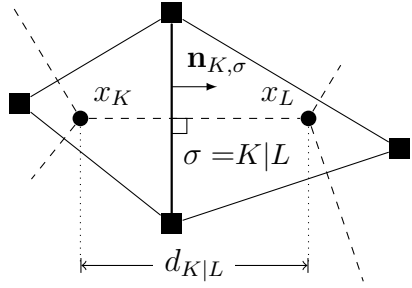

Using these notations, and by denoting additionally $\mathcal N$ the  maximum of edges incident to any vertex, and for any $\sigma\in\mathcal{E}_K$, and any $x_K\in\mathcal{P}$, $d(x_K,\sigma)$ the Euclidean distance between $x_K$ and $\sigma$, we introduce a positive number 
\begin{eqnarray}\label{mrp}
\reg=\max\left(\mathcal N,\max_{\scriptscriptstyle K \in\Tau \atop \sigma\in\mathcal{E}_K} \frac{\operatorname{diam}(K)}{d(x_K,\sigma)}\right)
\end{eqnarray}
 that measures the regularity of a given mesh and is useful to perform the convergence analysis of our finite-volume scheme.
This number should be uniformly bounded by a constant not depending on the mesh size $h$ for the convergence results to hold. We have in particular for any $ K,L\in \Tau$,  
\begin{equation}\label{hoverdkl}
\frac{h}{\dkl}\leq \reg.
\end{equation}

\subsection{Discrete unknowns and piecewise constant functions }\label{discretenotation}

From now on and unless otherwise specified, we consider $N\in \mathbb{N}^{\star}$, $\Delta t=\frac{T}{N}$ and $\Tau$ an admissible finite-volume mesh of $\Lambda$ in the sense of Definition \ref{defmesh} with a mesh size $h$. For $n\in\{0, ..., N-1\}$ given, the idea of a finite-volume scheme for the approximation of Problem \eqref{equation} is to associate to each control volume $K\in\Tau$ and time $t_n$ a discrete unknown value denoted by $u^n_K\in \mathbb{R}$, expected to be an approximation of $u(t_n,x_K)$, where $u$ is the variational solution of \eqref{equation}. Before presenting the numerical scheme satisfied by the discrete unknowns $\{u^n_K, K\in\Tau, n\in\{0, ..., N-1\}\}$, let us introduce some general notations.\\

For any arbitrary vector $(w_K^n)_{K\in\Tau}\in\re^{d_h}$ we can define the piecewise constant function $w_h^n:\Lambda\rightarrow \mathbb{R}$ by
\[
w_h^n(x):=\sum_{K\in\Tau} w^n_K \mathds{1}_K(x),\ \forall x\in \Lambda.
\]
Note that since the mesh $\Tau$ is fixed, by the continuous mapping defined from $ \mathbb{R}^{d_h}$ to $L^2(\Lambda)$ by
\[(w^n_K)_{{K\in\Tau}} 
\mapsto \sum_{K\in\Tau}\mathds{1}_K w^n_K, \]
the space $\re^{d_h}$ can be considered as a finite-dimensional subspace of $L^2(\Lambda)$ and we may naturally identify the function and the vector
\[
w^n_h\equiv(w^n_K)_{K\in\Tau}\in \mathbb{R}^{d_h}.
\]
Then, knowing for all $n \in\{0,\ldots,N\}$ the function $w_h^n$, we can define the following piecewise constant functions in time and space $\whnr, \whnl :[0,T]\times \Lambda\rightarrow \mathbb{R}$ by
\begin{equation}
 \label{eq:notation_wh}
 \begin{aligned}  
  \whnr(t,x):=&\sum_{n=0}^{N-1} w_h^{n+1}(x)\mathds{1}_{[t_n,t_{n+1})}(t)\text{ if }t\in[0,T)
\text{ and } \whnr(T,x):=w_h^N(x),\\
\whnl(t,x):=&\sum_{n=0}^{N-1} w_h^n(x)\mathds{1}_{[t_n,t_{n+1})}(t) \text{ if }t\in(0,T]\text{ and } 
\whnl(0,x):=w_h^0(x).
 \end{aligned}
\end{equation}
\begin{remark}
The superscripts $r$ and $l$ in \eqref{eq:notation_wh} do not refer to the continuity properties of the associated functions (which may be chosen either c\`{a}dl\`{a}g or c\`{a}gl\`{a}d). The difference is that in our case, the finite-volume approximation $u_{h,N}^l$ will be adapted to the filtration $(\mathcal{F}_t)_{t\geq 0}$, whereas $u_{h,N}^r$ won't be.
\end{remark}
As for the piecewise constant function in space, since $\Tau$ and $N$ are fixed, by the continuous mapping defined from $\mathbb{R}^{d_h\times N}$ to $L^2(0,T;L^2(\Lambda))$ by
\[(w_K^n)_{\substack{K\in\Tau \\ n\in\{0,\ldots,N-1\}}}\mapsto\sum_{\substack{K\in\Tau \\ n\in\{0,\ldots,N-1\}}}\mathds{1}_K\mathds{1}_{[t_n,t_{n+1})}w_K^n,\]
the space $\mathbb{R}^{d_h\times N}$ can be considered as a finite-dimensional subspace of $L^2(0,T;L^2(\Lambda))$ and we may naturally identify
\begin{align*}
\whnl&\equiv(w_K^n)_{\substack{K\in\Tau \\ n\in\{0,\ldots,N-1\}}}\in \mathbb{R}^{d_h\times N},\\
\whnr&\equiv(w_K^{n+1})_{\substack{K\in\Tau \\ n\in\{0,\ldots,N-1\}}}\in \mathbb{R}^{d_h\times N}.
\end{align*}
\begin{remark}Note that in the rest of the paper, when we will consider a time and space function $\alpha:[0,T]\times \Lambda\rightarrow \mathbb{R}$ on all the space $\Lambda$ (respectively the time interval $[0,T]$) at a fixed time $t\in [0,T]$ (respectively at a fixed $x\in \Lambda$) we will omit the space (respectively time) variable in the notations and write $\alpha(t)$ (respectively $\alpha(x)$) instead of $\alpha(t,\cdot)$ (respectively $\alpha(\cdot,x)$).
\end{remark}

\subsection{Discrete norms and weak gradient}\label{dnadg}
Fix $n\in\{0, ..., N-1\}$ and consider for the remainder of this subsection an arbitrary vector $(w^n_K)_{K\in\Tau}\in \mathbb{R}^{d_h}$ and use its natural identification with the piecewise constant function in space $w^n_h\equiv(w^n_K)_{K\in\Tau}$. We introduce in what follows the notions of weak gradient and discrete norms for such a function $w^n_h$.
\begin{defi}[Discrete $L^2$-norm]We define the $L^2$-norm of $w^n_h \in\re^{d_h}$ as follows
$$||w^n_h||_{L^2(\Lambda)}=\left(\sum_{K\in \Tau}m_K |w^n_K|^2\right)^\frac12.$$
\end{defi}

\begin{defi}[Weak gradient]
We define the gradient operator $\nabla^h$ that maps scalar fields $w^n_h\in\re^{d_h}$ into vector fields of $(\re^{d})^{e_h}$ (where $e_h$ is the number of elements of $\mathcal{E}$),
we set $\nabla^h w^n_h =(\nabla_\sigma^h w^n_h)_{\sigma\in\edges}$ with
 \[
  \nabla_\sigma^h w^n_h :=
  \left\{
  \begin{aligned}
   d\frac{w^n_L-w^n_K}{\dkl} \mathbf{n}_{K,\sigma}, \quad
   &\text{ if }\sigma=K|L\in\edgesint ; \\
   \qquad 0, \qquad\qquad &\text{ if } \sigma\in\edgesext.
  \end{aligned}
  \right.
 \]
\end{defi}

\begin{defi}[Discrete $H^1$-seminorm]
We define the $H^1$-seminorm of $w^n_h \in\re^{d_h}$ as follows
 \[
  |w^n_h|_{1,h}:=\left(\sum_{\sigma=K|L\in\edgesint}\frac{m_\sigma}{\dkl}|w^n_K-w^n_L|^2\right)^\halbe.
 \]

\end{defi}

\begin{remark}\label{discrpartint}
If we consider another arbitrary vector $\widetilde w^n_h\equiv(\widetilde{w}^n_K)_{K\in\Tau}\in \mathbb{R}^{d_h}$, by summing over the edges we may rearrange the sum on the left-hand side and get the following rule of "discrete partial integration"
\begin{align}\label{PInt}
\sum_{K\in\Tau}\sum_{\sigma=K|L\in\edges_K\cap\edgesint}\frac{m_\sigma}{\dkl}(w^n_K-w^n_L)\widetilde w^n_K
=\sum_{\sigma=K|L\in\edgesint}\frac{m_\sigma}{\dkl}(w^n_K-w^n_L)(\widetilde w^n_K-\widetilde w^n_L).
\end{align}
\end{remark}
\quad\\
Now, we have all the necessary definitions and notations to present the finite-volume scheme studied in this paper. This is the aim of the next subsection.

\subsection{The finite-volume scheme}

Firstly, we define the vector $u_h^0\equiv (u^0_K)_{K\in\Tau} \in \re^{d_h}$ by the discretization of the initial condition $u_0$ of Problem \eqref{equation} over each control volume:
\begin{align}
\label{eq:def_u0}
u_K^0:=\frac{1}{m_K}\int_K u_0(x)\,dx, \quad \forall K\in \Tau.
\end{align}
The finite-volume scheme we propose reads, for this given initial $\mathcal{F}_0$-measurable random vector $u_h^0\in\re^{d_h}$, as follows: \\
for any $n \in \{0,\dots,N-1\}$, knowing $u_h^n\equiv (u^{n}_K)_{K\in\Tau} \in \re^{d_h}$ we search $u_h^{n+1}\equiv(u^{n+1}_K)_{K\in\Tau}\in\re^{d_h}$ such that, for almost every $\omega\in\Omega$, the vector $u_h^{n+1}$ is a solution to the following random equations 
\begin{align}\label{equationapprox}
\begin{split}
&\frac{m_K}{\Delta t}(u_K^{n+1}-u_K^n)+\sum_{\sigma\in\edgesint\cap\edges_K} m_\sigma \vksnp u^{n+1}_{\sigma} +\sum_{\sigma=K|L\in\edgesint\cap\edges_K}\frac{m_\sigma}{\dkl}(u_K^{n+1}-u_L^{n+1})\\
&=\frac{m_K}{\Delta t}g(u_K^n)(W^{n+1}-W^n)+m_K \beta(\unpk),\quad \forall K\in \Tau,
\end{split}
\end{align}
where, the $(d-1)$-dimensional Lebesgue measure is denoted by $\gamma$,
$$\vksnp=\frac{1}{\dlt \ms}\int_{\tn}^{\tnp}\int_{\sigma}\ve(t,x)\cdot\mathbf{ n}_{K,\sigma}\, d\gamma(x)\,dt,$$ 
and $u^{n+1}_{\sigma}$ denotes the upstream value at time $t_{n+1}$ with respect to $\sigma$ defined as follows: if $\sigma\in\edgesint\cap\edges_K$ is the interface between the control volumes $K$ and $L$ (\textit{i.e.} $\sigma=K|L$), $u^{n+1}_{\sigma}$ is equal to $u^{n+1}_K$ if $\vksnp\geq 0$ and to $u^{n+1}_L$ if $\vksnp< 0$.
Note also that $W^{n+1}-W^n$ denotes the increments of the Brownian motion between $t_{n+1}$ and $t_n$:
$$W^{n+1}-W^n=W(t_{n+1})-W(t_n)\text{ for }n\in\{0,\dots,N-1\}.$$ 

\begin{remark} Note that using the divergence-free property of $\ve$ (\textit{i.e.} $\diver(\mathbf{v}(t,x))=0$ for all $(t,x)\in[0,T]\times \Lambda$), the scheme \eqref{equationapprox} can be rewritten in the following way:
\begin{align}\label{equationapproxbis}
&\frac{m_K}{\Delta t}(u_K^{n+1}-u_K^n)+\sum_{\sigma\in\edgesint\cap\edges_K} m_\sigma \vksnp(u^{n+1}_{\sigma}-u^{n+1}_K)+\sum_{\sigma=K|L\in\edgesint\cap\edges_K}\frac{m_\sigma}{\dkl}(u_K^{n+1}-u_L^{n+1})\nonumber \\
&=\frac{m_K}{\Delta t}g(u_K^n)\left(W^{n+1}-W^n\right)+m_K \beta(\unpk),\quad  \forall K \in \Tau.
\end{align}
Indeed, 
\begin{eqnarray*}
\sum_{\sigma\in\edgesint\cap\edges_K} m_\sigma \vksnp u^{n+1}_K
&=&\sum_{\sigma\in\edgesint\cap\edges_K} \frac{u^{n+1}_K}{\dlt} \int_{\tn}^{\tnp}\int_{\sigma}\ve(t,x)\cdot\mathbf{ n}_{K,\sigma}\,d\gamma(x)\,dt\\
&=&\frac{u^{n+1}_K}{\dlt}\int_{\tn}^{\tnp}\int_{\partial K}\mathbf{v}(t,x)\cdot \mathbf{n}_{K}(x)\,d\gamma(x)\,dt\\
&=&\frac{u^{n+1}_K}{\dlt}\int_{\tn}^{\tnp} \int_{K}\diver(\mathbf{v}(t,x))\,dx\,dt\\
&=&0.
\end{eqnarray*}

Note that using \eqref{equationapproxbis} and the fact that\footnotemark[1] \footnotetext[1]{For any $a\in\R$, $a^+=\max(a,0)$ and $a^-=-\min(a,0)$.} $\vksnp=(\vksnp)^+-(\vksnp)^-$, another equivalent formulation of the scheme \eqref{equationapprox} is given for any $K\in\Tau$ by
\begin{align}\label{equationapproxter}
\begin{split}
&\frac{m_K}{\Delta t}(u_K^{n+1}-u_K^n)+\sum_{\sigma=K|L\in\edgesint\cap\edges_K} m_\sigma (\vksnp)^-\big(u^{n+1}_{K}-u^{n+1}_L\big)\\
&+\sum_{\sigma=K|L\in\edgesint\cap\edges_K}\frac{m_\sigma}{\dkl}(u_K^{n+1}-u_L^{n+1})=\frac{m_K}{\Delta t}g(u_K^n)\left(W^{n+1}-W^n\right)+m_K \beta(\unpk).
\end{split}
\end{align}
\end{remark}

\begin{prop}[Existence of a discrete solution]
\label{210609_prop1}
Assume that hypotheses $\mathscr{A}_1$ to $\mathscr{A}_4$ hold. Let $\Tau$ be an admissible finite-volume mesh of $\Lambda$  in the sense of Definition \ref{defmesh} with a mesh size $h$ and $N\in \mathbb{N}^{\star}$. Then, there exists a unique solution $(u_h^n)_{1\le n \le N} \in (\re^{d_h})^N$ to Problem~\eqref{equationapprox} associated with the initial vector $u^0_h$ defined by~\eqref{eq:def_u0}. Additionally, for any $n\in \{0,\ldots,N\}$, $u_h^n$ is a $\mathcal{F}_{t_n}$-measurable random vector.
\end{prop}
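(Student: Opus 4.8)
The plan is to prove existence and uniqueness of the discrete solution inductively on $n$, and then to establish measurability as a byproduct of the construction. Fix $\omega \in \Omega$ and suppose that the $\mathcal{F}_{t_n}$-measurable vector $u_h^n \equiv (u_K^n)_{K \in \Tau}$ is already known; we must solve the system \eqref{equationapprox} for $u_h^{n+1}$. The key observation is that, because the noise term $\frac{m_K}{\Delta t} g(u_K^n)(W^{n+1}-W^n)$ depends only on $u_h^n$ and not on $u_h^{n+1}$, the system is, for each fixed $\omega$, a \emph{deterministic} coupled algebraic system in the unknown $u_h^{n+1} \in \re^{d_h}$. Rewriting it as $F(u_h^{n+1}) = b^n$, where $b^n \in \re^{d_h}$ collects the known data $\frac{m_K}{\Delta t} u_K^n + \frac{m_K}{\Delta t} g(u_K^n)(W^{n+1}-W^n)$, the map $F : \re^{d_h} \to \re^{d_h}$ is given componentwise by
\[
F(v)_K = \frac{m_K}{\Delta t} v_K + \sum_{\sigma \in \edgesint \cap \edges_K} m_\sigma \vksnp v_\sigma + \sum_{\sigma = K|L \in \edgesint \cap \edges_K} \frac{m_\sigma}{\dkl}(v_K - v_L) - m_K \beta(v_K),
\]
with $v_\sigma$ the upwind value. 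I would prove that $F$ is a well-defined continuous (in fact Lipschitz, up to the $\beta$ term which is Lipschitz by $\mathscr{A}_3$) and \emph{coercive} map, hence surjective, and that it is injective, so that $u_h^{n+1} = F^{-1}(b^n)$ is uniquely determined.

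For the existence part, the cleanest route is to show that $F$ is continuous and that $F(v) \cdot v \to \infty$ as $|v| \to \infty$ (coercivity in $\re^{d_h}$), then invoke the standard corollary of Brouwer's fixed-point theorem (e.g. \cite{gal}) that a continuous coercive map on $\re^{d_h}$ is onto. To compute $F(v) \cdot v = \sum_K F(v)_K\, v_K$, I would use exactly the discrete-partial-integration identity \eqref{PInt} to rewrite the diffusion contribution $\sum_K \sum_{\sigma = K|L} \frac{m_\sigma}{\dkl}(v_K - v_L) v_K = \sum_{\sigma = K|L \in \edgesint} \frac{m_\sigma}{\dkl}|v_K - v_L|^2 = |v_h|_{1,h}^2 \ge 0$; I would use the reformulation \eqref{equationapproxter} together with $(\vksnp)^- \ge 0$ and the elementary inequality $a(a-b) \ge \frac12(a^2 - b^2)$ (after symmetrizing over the two orientations of each interior edge) to show the convection term is nonnegative, or at worst controlled; and I would bound the $\beta$ term via $|m_K \beta(v_K) v_K| \le L_\beta m_K |v_K|^2$ using $\mathscr{A}_3$. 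Choosing $\Delta t$ — or rather noting the estimate holds for all $\Delta t$ since we can absorb $L_\beta |v|^2$ into the $\frac{1}{\Delta t}|v|^2$ term only when $\Delta t$ is small, or alternatively keeping the full term and arguing more carefully — yields $F(v) \cdot v \ge c|v|^2 - C$ for suitable constants, which gives coercivity. Strictly speaking the clean statement requires no smallness on $\Delta t$, and I would handle the $\beta$-term by the sharper bound $m_K(\beta(v_K) - \beta(0))v_K \le L_\beta m_K |v_K|^2$ with $\beta(0) = 0$, accepting a possible restriction or a Gronwall-type argument; this is the one place to be slightly careful.

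For uniqueness, suppose $v, w \in \re^{d_h}$ both solve $F(\cdot) = b^n$. Subtracting the equations and testing the difference against $v - w$ (i.e. computing $(F(v) - F(w)) \cdot (v - w) = 0$), the diffusion term again contributes $|v_h - w_h|_{1,h}^2 \ge 0$ by \eqref{PInt}; the convection term, being linear in the unknown, contributes a nonnegative quantity by the same manipulation as above applied to $v - w$ (using that the upwind choice makes the associated matrix monotone, i.e. $\sum_K \sum_{\sigma=K|L} m_\sigma \vksnp (z_\sigma)(z_K) \ge 0$ after symmetrization, where $z = v-w$); and the $\frac{m_K}{\Delta t}|v_K - w_K|^2$ term is strictly positive unless $v = w$. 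The only term with a sign that could hurt is $-m_K(\beta(v_K) - \beta(w_K))(v_K - w_K)$, bounded in absolute value by $L_\beta m_K |v_K - w_K|^2$; combining, $\big(\tfrac{1}{\Delta t} - L_\beta\big)\|v_h - w_h\|_{L^2(\Lambda)}^2 \le 0$, which forces $v = w$ provided $\Delta t L_\beta < 1$. Again, if one wants no restriction on $\Delta t$ the argument can be iterated or one invokes that $F$ is the sum of a strictly monotone linear part and a Lipschitz perturbation; I expect the authors to either impose $\Delta t$ small or absorb this differently, and this is the main (minor) obstacle.

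Finally, for the measurability claim I would argue inductively: $u_h^0$ is $\mathcal{F}_0$-measurable by \eqref{eq:def_u0} and $\mathscr{A}_1$; assuming $u_h^n$ is $\mathcal{F}_{t_n}$-measurable, the data vector $b^n$ is $\mathcal{F}_{t_{n+1}}$-measurable because it is a continuous (hence Borel) function of $u_h^n$ (via $g$, which is continuous by $\mathscr{A}_2$) and of the increment $W^{n+1} - W^n$, which is $\mathcal{F}_{t_{n+1}}$-measurable. Since $u_h^{n+1} = F^{-1}(b^n)$ and $F^{-1}$ is continuous (being the inverse of a continuous bijection on $\re^{d_h}$, or directly because $F$ is a homeomorphism by invariance of domain / strict monotonicity), $u_h^{n+1}$ is $\mathcal{F}_{t_{n+1}}$-measurable, completing the induction.
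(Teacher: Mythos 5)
The paper does not actually prove this proposition: the authors defer the entire argument to the companion preprint \cite{BNSZ23}, so there is no in-text proof to compare against. That said, your reconstruction is the standard and correct route for this kind of semi-implicit nonlinear TPFA scheme, and it is consistent with everything the paper later uses. Your key structural observations are right: for fixed $\omega$ the system is a deterministic algebraic system in $u_h^{n+1}$ because the noise enters only through $u_h^n$; the upwind value $u_\sigma^{n+1}$ is a \emph{linear} function of the unknown vector since the upwind direction is decided by the sign of the known quantity $\vksnp$, so the convection operator is linear and (by the computation the paper itself carries out to obtain \eqref{term2}, using $\operatorname{div}\ve=0$) positive semi-definite when tested against the unknown; the diffusion part is symmetric positive semi-definite by \eqref{PInt}; existence then follows from continuity plus coercivity via the Brouwer-type surjectivity lemma, uniqueness from strict monotonicity of $F$, and measurability from the fact that $F$ is a deterministic homeomorphism (indeed $F^{-1}$ is Lipschitz by the monotonicity estimate) applied to the $\mathcal{F}_{t_{n+1}}$-measurable data $b^n$.

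The one point you flag as a possible obstacle is in fact a genuine restriction, not something you can argue away: without $\Delta t\, L_\beta<1$ the map $F$ need not be surjective. Take $\beta(r)=L_\beta r$ (admissible under $\mathscr{A}_3$) and $\Delta t=1/L_\beta$; then the diagonal contribution $\frac{m_K}{\Delta t}v_K-m_K\beta(v_K)$ vanishes and $F$ reduces to the convection--diffusion operator, whose kernel contains the constant vectors under the homogeneous Neumann setting, so $F(\cdot)=b^n$ is not solvable for generic $b^n$. Thus the proposition as literally stated, for every $N\in\mathbb{N}^{\star}$, requires the smallness condition; this is coherent with the paper, which already assumes ``$N$ large enough'' (so that $1-2\Delta t L_\beta\geq\frac14$) in Proposition \ref{bounds}, and with the fact that Theorem \ref{mainresult} only concerns the regime $N_m\to\infty$. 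Apart from making that hypothesis explicit, your proof is complete.
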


The solution $(u_h^n)_{1\le n \le N} \in (\re^{d_h})^N$ of the scheme \eqref{eq:def_u0}-\eqref{equationapprox} is then used to build the right and left finite-volume approximations $u^r_{h,N}$ and $u^l_{h,N}$ defined by \eqref{eq:notation_wh} for the variational solution $u$ of Problem \eqref{equation}.

\begin{proof} We refer to \cite{BNSZ23} exclusively dedicated to the proof of such an existence and uniqueness result. 
\end{proof}

\section{Stability estimates}\label{estimates}

We will derive in this section several stability estimates satisfied by the discrete solution $(u_h^n)_{1 \le n \le N} \in (\re^{d_h})^N$ of the scheme \eqref{eq:def_u0}-\eqref{equationapprox} given by Proposition \ref{210609_prop1}, and also by the associated right and left finite-volume approximations $(u^r_{h,N})_{h,N}$ and $(u^l_{h,N})_{h,N}$ defined by \eqref{eq:notation_wh}. We start by giving a bound on the discrete initial data, as a direct consequence of the definition of $u_h^0$ and the Cauchy-Schwarz inequality:
\begin{lem}
\label{bound_u0}
Let $u_0$ be a given function satisfying assumption $\mathscr{A}_1$. Then, the associated discrete initial data $u_h^0 \in \re^{d_h}$ defined by~\eqref{eq:def_u0} satisfies $\mathds{P}$-a.s. in $\Omega$,
\begin{equation*}
\|u_h^0\|_{L^2(\Lambda)} \leq \|u_0\|_{L^2(\Lambda)}.
\end{equation*}
\end{lem}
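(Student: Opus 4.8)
The plan is to apply Jensen's inequality to the defining formula \eqref{eq:def_u0} for the discrete initial data, control volume by control volume, and then sum. Since $u_K^0$ is an average of $u_0$ over $K$, the map $x \mapsto x^2$ being convex gives a pointwise-in-$K$ comparison, after which summing over $K \in \Tau$ reconstructs exactly $\|u_0\|_{L^2(\Lambda)}^2$ because the control volumes tile $\Lambda$ up to a null set. This is entirely deterministic in $\omega$ (the dependence on $\omega$ in $u_0$ is just a parameter here), so the bound holds $\mathds{P}$-a.s. in the same way it holds for a fixed function.

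\begin{proof}
Fix $\omega \in \Omega$ (we suppress it in the notation). Let $K \in \Tau$. By the definition \eqref{eq:def_u0} of $u_K^0$ and the Cauchy--Schwarz inequality (equivalently, Jensen's inequality applied to the convex function $r \mapsto r^2$ and the probability measure $\tfrac{1}{m_K}\lambda_2|_K$ on $K$),
\begin{align*}
|u_K^0|^2 = \left|\frac{1}{m_K}\int_K u_0(x)\,dx\right|^2 \leq \frac{1}{m_K}\int_K |u_0(x)|^2\,dx.
\end{align*}
Multiplying by $m_K$ and summing over all control volumes $K \in \Tau$, we obtain
\begin{align*}
\|u_h^0\|_{L^2(\Lambda)}^2 = \sum_{K\in\Tau} m_K |u_K^0|^2 \leq \sum_{K\in\Tau} \int_K |u_0(x)|^2\,dx = \int_\Lambda |u_0(x)|^2\,dx = \|u_0\|_{L^2(\Lambda)}^2,
\end{align*}
where we used that $\overline{\Lambda} = \bigcup_{K\in\Tau} \overline{K}$ with pairwise disjoint interiors, so that the control volumes form a partition of $\Lambda$ up to a set of two-dimensional Lebesgue measure zero. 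Taking square roots yields the claimed inequality. Since $u_0 \in L^2(\Omega;H^1(\Lambda))$ by $\mathscr{A}_1$, the quantities above are finite for $\mathds{P}$-a.e. $\omega \in \Omega$, and the estimate holds $\mathds{P}$-a.s. in $\Omega$.
\end{proof}

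There is no real obstacle here: the only point requiring a moment of care is the measure-theoretic fact that the control volumes cover $\Lambda$ up to a null set, which is immediate from Definition \ref{defmesh}. The $\mathcal{F}_0$-measurability of $u_h^0$ (needed elsewhere, e.g. in Proposition \ref{210609_prop1}) is inherited from that of $u_0$ via \eqref{eq:def_u0}, but is not needed for this norm bound.
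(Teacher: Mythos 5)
Your proof is correct and follows exactly the route the paper intends: the paper dispenses with the proof in one line ("a direct consequence of the definition of $u_h^0$ and the Cauchy--Schwarz inequality"), and your cell-by-cell Cauchy--Schwarz/Jensen estimate followed by summation over the control volumes is precisely that argument, spelled out. No issues.
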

Now, we can give the bounds on the discrete solutions which is one of the key points of the proof of the convergence theorem.

\begin{prop}[Bounds on the discrete solutions]\label{bounds}
There exists a constant $K_0\geq 0$, depending only on $u_0$, $C_{L_g}$, $L_{\beta}$, $|\Lambda|$ and $T$ such that for any $N\in\mathbb{N}^{\star}$ large enough and any $h\in \R^\star_+$
\begin{align*}
&\erwb \|u_h^n\|_{L^2(\Lambda)}^2 \erwe+\erwb\sum_{k=0}^{n-1}\|u_h^{k+1}-u_h^k\|_{L^2(\Lambda)}^2\erwe+\Delta t \sum_{k=0}^{n-1}\erww{|u_h^{k+1}|_{1,h}^2}\leq K_0,\; \forall n\in \{1,\ldots,N\}.
\end{align*}
\end{prop}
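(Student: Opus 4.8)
The plan is to perform a discrete It\^o-type energy estimate: test the scheme \eqref{equationapprox} (in its equivalent form \eqref{equationapproxter}) with $u_h^{k+1}$, i.e. multiply the equation for control volume $K$ by $u_K^{k+1}$ and sum over $K\in\Tau$, then sum over $k=0,\dots,n-1$, and finally take expectations. First I would treat the parabolic/time term: using the algebraic identity $a(a-b)=\tfrac12(a^2-b^2)+\tfrac12(a-b)^2$ applied with $a=u_K^{k+1}$, $b=u_K^k$, the term $\sum_K m_K(u_K^{k+1}-u_K^k)u_K^{k+1}$ telescopes after summation in $k$ to $\tfrac12\|u_h^n\|_{L^2(\Lambda)}^2 - \tfrac12\|u_h^0\|_{L^2(\Lambda)}^2 + \tfrac12\sum_{k=0}^{n-1}\|u_h^{k+1}-u_h^k\|_{L^2(\Lambda)}^2$. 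The diffusion term, after the discrete partial integration rule \eqref{PInt}, yields exactly $\Delta t\sum_{k=0}^{n-1}|u_h^{k+1}|_{1,h}^2$, which is the good dissipative term (recall Remark \ref{remarkforuhnrboundiii} relates this to $\|\nabla^h\cdot\|^2$). For the convection term I would use the formulation \eqref{equationapproxter}: the term $\sum_{\sigma=K|L}m_\sigma(\vksnp)^-(u_K^{k+1}-u_L^{k+1})u_K^{k+1}$, after reorganizing the sum over interior edges (each edge seen from both sides, using $(\mathbf v^{n+1}_{L,\sigma})=-(\mathbf v^{n+1}_{K,\sigma})$ hence the two one-sided contributions combine), produces a nonnegative quantity of the form $\sum_{\sigma=K|L} m_\sigma(\vksnp)^\pm \cdot\tfrac12|u_K^{k+1}-u_L^{k+1}|^2 \geq 0$ — the standard monotonicity/upwind dissipation — and can simply be discarded (moved to the favorable side). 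This is where the divergence-free and no-flux hypotheses $\mathscr A_4$ are essential, and where the reformulations \eqref{equationapproxbis}-\eqref{equationapproxter} pay off.

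Next I would handle the right-hand side. The drift term $\sum_k\Delta t\sum_K m_K\beta(u_K^{k+1})u_K^{k+1}$ is controlled by $L_\beta$-Lipschitzness and $\beta(0)=0$: $|\beta(r)r|\leq L_\beta r^2$, so this is bounded by $L_\beta\Delta t\sum_{k=0}^{n-1}\|u_h^{k+1}\|_{L^2(\Lambda)}^2$, which will be absorbed by a discrete Gronwall argument. For the stochastic term $\sum_K m_K g(u_K^k)(W^{k+1}-W^k)u_K^{k+1}$, the standard trick is to split $u_K^{k+1}=u_K^k+(u_K^{k+1}-u_K^k)$. The term with $u_K^k$ is a martingale increment: since $u_h^k$ and $g(u_h^k)$ are $\mathcal F_{t_k}$-measurable and $W^{k+1}-W^k$ is independent of $\mathcal F_{t_k}$ with mean zero, its expectation vanishes. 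The remaining term $\sum_K m_K g(u_K^k)(W^{k+1}-W^k)(u_K^{k+1}-u_K^k)$ is estimated by Young's inequality: $\leq \tfrac14\|u_h^{k+1}-u_h^k\|_{L^2(\Lambda)}^2 + \|g(u_h^k)\|_{L^2(\Lambda)}^2|W^{k+1}-W^k|^2$; the first part is absorbed into the good telescoped increment term (that is why the factor $2$ on $\sum\|u_h^{k+1}-u_h^k\|^2$ in the statement, rather than the naive $1$, survives — one keeps a chunk of it), and after taking expectations $\erw|W^{k+1}-W^k|^2=\Delta t$ together with the growth bound \eqref{H3}, $\|g(u_h^k)\|_{L^2(\Lambda)}^2\leq C_{L_g}(|\Lambda|+\|u_h^k\|_{L^2(\Lambda)}^2)$, gives a term of the form $C_{L_g}\Delta t(|\Lambda|+\erw\|u_h^k\|_{L^2(\Lambda)}^2)$.

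Collecting everything and using Lemma \ref{bound_u0} to bound $\|u_h^0\|_{L^2(\Lambda)}\leq\|u_0\|_{L^2(\Lambda)}$, I arrive at an inequality of the schematic form
\begin{align*}
\erw\|u_h^n\|_{L^2(\Lambda)}^2 + \text{(good terms)} \;\leq\; C\big(\erw\|u_0\|_{L^2(\Lambda)}^2 + T C_{L_g}|\Lambda|\big) + C(C_{L_g}+L_\beta)\,\Delta t\sum_{k=0}^{n-1}\erw\|u_h^{k}\|_{L^2(\Lambda)}^2,
\end{align*}
where "good terms" are $\tfrac{3}{2}\sum\erw\|u_h^{k+1}-u_h^k\|^2$ (or any constant $\geq 1$ suffices for the claim) and $\Delta t\sum|u_h^{k+1}|_{1,h}^2$ with the required constant $8$ after tracking the factor $2$ from Remark \ref{remarkforuhnrboundiii}. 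A discrete Gr\"onwall lemma then removes the sum on the right at the cost of a multiplicative factor $e^{C(C_{L_g}+L_\beta)T}$, valid once $\Delta t=T/N$ is small enough (i.e. $N$ large enough) so that the implicit $\Delta t$-coefficient appearing when one absorbs the $k=n$ term is $<1$. This yields the constant $K_0$ depending only on $u_0$, $C_{L_g}$, $L_\beta$, $|\Lambda|$, $T$, uniformly in $h$. The main obstacle is bookkeeping: one must be careful that the coefficient absorbed from the Young-inequality splitting of the stochastic term does not consume more than half of the telescoped increment term, and that the upwind convection term is correctly seen to be nonnegative after edge-wise reorganization — this is the one place where the structure of the scheme (divergence-free $\mathbf v$, no-flux boundary, and the upwind choice of $u_\sigma^{n+1}$) is genuinely used rather than just Lipschitz/growth estimates.
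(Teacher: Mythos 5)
Your plan reproduces the paper's proof in all essential respects: testing \eqref{equationapproxter} with $u_K^{k+1}$, the identity $a(a-b)=\tfrac12(a^2-b^2)+\tfrac12(a-b)^2$ for the time term, discrete partial integration \eqref{PInt} for the diffusion term, nonnegativity of the upwind convection term via the divergence-free property, the martingale splitting $u_K^{k+1}=u_K^k+(u_K^{k+1}-u_K^k)$ with Young's inequality and the It\^o isometry for the noise, $|\beta(r)r|\le L_\beta r^2$ for the drift, and a discrete Gr\"onwall argument for $\Delta t$ small enough. The only (harmless) cosmetic difference is that the paper bounds the convection term with the one-sided inequality $b(b-a)\ge\tfrac12(b^2-a^2)$ and shows the resulting sum vanishes, rather than keeping the explicit edge dissipation $\sum_\sigma m_\sigma|\vkskp|\,|u_K^{k+1}-u_L^{k+1}|^2/2$ as you do.
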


\begin{proof}
Set $N\in\mathbb{N}^{\star}$, $h\in \mathbb{R}_+^\star$ and fix $n\in \{1,\ldots,N\}$. For any $k\in \{0,\ldots,n-1\}$, we multiply the numerical scheme \eqref{equationapproxter} with $u_K^{k+1}$, take the expectation, and sum over $K\in\Tau$ to obtain thanks to \eqref{PInt}
\begin{align}\label{implicitscheme}
\begin{split}
&\sum_{K\in\Tau}\frac{m_K}{\Delta t}\erwb(u_K^{k+1}-u_K^k)u_K^{k+1}\erwe
+ \sum_{\sigma=K|L\in\edgesint}\frac{m_\sigma}{\dkl}\erwb|u_K^{k+1}-u_L^{k+1}|^2\erwe\\
&+\sum_{K\in\Tau}\sum_{\sigma=K|L\in\edgesint\cap\edges_K} m_\sigma (\vkskp)^-\erwb\big(\ukpk-\ukpl\big)u_K^{k+1}\erwe\\
=\,&\sum_{K\in\Tau}\frac{m_K}{\Delta t}\erwb g(u_K^k)u_K^{k+1}\left(W^{k+1}-W^k\right)\erwe+\sum_{K\in\Tau}m_K \erwb \beta(\ukpk)\ukpk\erwe.
\end{split}
\end{align}
We consider the terms of \eqref{implicitscheme} separately. Firstly note that
\begin{align}\label{term1}
\sum_{K\in\Tau}\frac{m_K}{\Delta t}\erwb(u_K^{k+1}-u_K^k)u_K^{k+1}\erwe=\halbe\sum_{K\in\Tau}\frac{m_K}{\Delta t}\erwb|u_K^{k+1}|^2-|u_K^k|^2+|u_K^{k+1}-u_K^k|^2\erwe.
\end{align}
Secondly, using the inequality $\forall a,b\in\R, \ \displaystyle b(b-a)\geq \frac{b^2}{2}-\frac{a^2}{2}$ with $b=u_K^{k+1}$ and $a=u_L^{k+1}$, one arrives at 
$$\big(u^{k+1}_{K}-u^{k+1}_L\big)u_K^{k+1}\geq \frac{(u_K^{k+1})^2}{2}-\frac{(u_L^{k+1})^2}{2}.$$
Note that $v_{L,\sigma}^{k+1}=-\vkskp$, thus using the divergence-free property of $\ve$ one gets
\begin{align*}
&\sum_{K\in \Tau}\sum_{\sigma=K|L\in\edgesint\cap\edges_K} m_\sigma (\vkskp)^- \left(\frac{(u_L^{k+1})^2}{2}-\frac{(u_K^{k+1})^2}{2}\right)\\
=&\sum_{K\in \Tau}\sum_{\sigma\in\edgesint\cap\edges_K} m_\sigma \vkskp \frac{(u_K^{k+1})^2}{2}\\
=&\sum_{K\in \Tau}\sum_{\sigma\in\edgesint\cap\edges_K}\frac{(u_K^{k+1})^2}{2\dlt}\int_{\tn}^{\tnp} \int_\sigma \ve(t,x)\cdot\mathbf{n}_{K,\sigma}\,d\gamma(x)\,dt \\
=&\sum_{K\in \Tau} \frac{(u_K^{k+1})^2}{2\dlt}\int_{\tn}^{\tnp} \int_K \diver(\ve(t,x))\,dx\,dt \\
=&0,
\end{align*}
and this leads to
\begin{eqnarray}\label{term2}
\sum_{K\in\Tau}\sum_{\sigma=K|L\in\edgesint\cap\edges_K} m_\sigma (\vkskp)^-\E\left[\big(\ukpk-\ukpl\big)u_K^{k+1}\right]\geq 0.
\end{eqnarray}
Thirdly, since $u_K^k$ and $\left(W^{k+1}-W^k\right)$ are independent one obtains
\begin{eqnarray*}
\sum_{K\in\Tau}\frac{m_K}{\Delta t}\erwb g(u_K^k)u_K^k\left(W^{k+1}-W^k\right)\erwe=0,
\end{eqnarray*}
and so by applying Young's inequality and using the It\^{o} isometry one arrives at
\begin{align}\label{term3}
\begin{split}
&\sum_{K\in\Tau}\frac{m_K}{\Delta t}\erwb g(u_K^k)u_K^{k+1}\left(W^{k+1}-W^k\right)\erwe\\
=\,&\sum_{K\in\Tau}\frac{m_K}{\Delta t}\erwb g(u_K^k)(u_K^{k+1}-u_K^k)\left(W^{k+1}-W^k\right)\erwe\\
\leq\,&\sum_{K\in\Tau}\frac{m_K}{\Delta t}\erwb|g(u_K^k)\left(W^{k+1}-W^k\right)|^2\erwe+\frac{1}{4}\sum_{K\in\Tau}\frac{m_K}{\Delta t}\erwb|u_K^{k+1}-u_K^k|^2\erwe\\
\leq\, &\Delta t\sum_{K\in\Tau}\frac{m_K}{\Delta t}\erwb|g(u_K^k)|^2\erwe+\frac{1}{4}\sum_{K\in\Tau}\frac{m_K}{\Delta t}\erwb|u_K^{k+1}-u_K^k|^2\erwe.
\end{split}
\end{align}
Fourthly, using the Lipschitz property of $\beta$, the following holds
\begin{eqnarray}\label{term4}
\sum_{K\in\Tau}m_K \erwb \beta(\ukpk)\ukpk\erwe&\leq& L_{\beta}\sum_{K\in\Tau}m_K \erwb |\ukpk|^2\erwe.
\end{eqnarray}\noindent Combining \eqref{term1}-\eqref{term2}-\eqref{term3} and \eqref{term4} and multiplying the obtained inequality with $2\Delta t$, one gets
\begin{eqnarray*}
&&\sum_{K\in\Tau}m_K\erwb |u_K^{k+1}|^2-|u_K^k|^2+|u_K^{k+1}-u_K^k|^2\erwe+ 2\dlt\sum_{\sigma=K|L\in\edgesint}\frac{m_\sigma}{\dkl}\erwb|u_K^{k+1}-u_L^{k+1}|^2\erwe\\
&\leq&2\dlt\sum_{K\in\Tau}m_K\erwb|g(u_K^k)|^2\erwe+\frac{1}{2}\sum_{K\in\Tau}m_K\erwb|u_K^{k+1}-u_K^k|^2\erwe+2\dlt L_{\beta}\sum_{K\in\Tau}m_K \erwb |\ukpk|^2\erwe.
\end{eqnarray*}
Then, from \eqref{H3}
\begin{align*}
&(1-2\dlt L_{\beta}) \sum_{K\in\Tau}m_K\erwb |u_K^{k+1}|^2-|u_K^k|^2\erwe
+\frac{1}{2}\sum_{K\in\Tau}m_K\erwb|u_K^{k+1}-u_K^k|^2\erwe\\
&+ 2\dlt\sum_{\sigma=K|L\in\edgesint}\frac{m_\sigma}{\dkl}\erwb|u_K^{k+1}-u_L^{k+1}|^2\erwe\leq 2C_{L_g}\dlt |\Lambda|+ 2\dlt (C_{L_g}+L_{\beta})\sum_{K\in\Tau}m_K \erwb (u^k_K)^2\erwe.
\end{align*}
For $\dlt$ small enough so that $1-2\dlt L_{\beta}\geq \frac{1}{4}$, after summing over $k\in\{0,\dots,n-1\}$, one arrives at
\begin{align}\label{lem1beschr}
\begin{split}
&\frac{1}{4}\erwb\|u_h^n\|_{L^2(\Lambda)}^2-\|u_h^0\|_{L^2(\Lambda)}^2\erwe+\frac{1}{2}\sum_{k=0}^{n-1}\erww{\|u_h^{k+1}-u_h^k\|_{L^2(\Lambda)}^2}+ 2\Delta t\sum_{k=0}^{n-1}\erwb|u_h^{k+1}|_{1,h}^2\erwe\\
&\leq 2C_{L_g}T |\Lambda|+ 2\dlt (C_{L_g}+L_{\beta}) \sum_{k=0}^{n-1}\erwb\|u_h^k\|_{L^2(\Lambda)}^2\erwe.
\end{split}
\end{align}
Then, it follows that
\[
\erwb\|u_h^n\|_{L^2(\Lambda)}^2\erwe\leq\erwb\|u_h^0\|_{L^2(\Lambda)}^2\erwe+8C_{L_g}|\Lambda|T+8\Delta t(C_{L_g}+L_{\beta})\sum_{k=0}^{n-1}\erwb\|u_h^k\|_{L^2(\Lambda)}^2\erwe.
\]
Applying the discrete Gronwall lemma yields 
\begin{align}\label{uhnbound}
\erwb\|u_h^n\|_{L^2(\Lambda)}^2\erwe\leq\left(\erwb\|u_h^0\|_{L^2(\Lambda)}^2\erwe+8C_{L_g}|\Lambda|T\right)e^{8T(C_{L_g}+L_{\beta})}.
\end{align}
From \eqref{uhnbound} and Lemma~\ref{bound_u0} we may conclude that there exists a constant $\Upsilon>0$ such that
\begin{align}\label{uhnboundbis}
\sup_{n\in\{1,\dots,N\}}\erww{\|u_h^n\|_{L^2(\Lambda)}^2}\leq \Upsilon.
\end{align}
Thanks to \eqref{uhnboundbis} and \eqref{H3} one gets that for all $n\in\{1,\ldots N\}$
\begin{align}\label{210819_02}
\Delta t\sum_{k=0}^{n-1}\erww{\|g(u_h^k)\|_{L^2(\Lambda)}^2}\leq C_{L_g}|\Lambda|T+C_{L_g}\Delta t\sum_{k=0}^{n-1}\erww{\|u_h^k\|_{L^2(\Lambda)}^2}\leq C_{L_g}T(\Upsilon+|\Lambda|).
\end{align}
From \eqref{lem1beschr}, Lemma~\ref{bound_u0} and \eqref{uhnboundbis} it now follows that for all $n\in \{1,\ldots, N\}$
\begin{align*}
\begin{split}
&\erwb\|u_h^n\|_{L^2(\Lambda)}^2\erwe+2\sum_{k=0}^{n-1}\erww{\|u_h^{k+1}-u_h^k\|_{L^2(\Lambda)}^2}+ 8\Delta t\sum_{k=0}^{n-1}\erwb|u_h^{k+1}|_{1,h}^2\erwe\\
&\leq  \erwb\|u_0\|_{L^2(\Lambda)}^2\erwe+8C_{L_g}T |\Lambda|+8\Upsilon T(C_{L_g}+L_{\beta}).
\end{split}
\end{align*}
Now, defining $K_0\geq 0$ to be the right-hand side of the above equation, the result follows.
\end{proof}
We are now interested in the bounds on the right and left finite-volume approximations defined by~\eqref{eq:notation_wh}.

\begin{lem}\label{210611_lem01}
The sequences $(\uhnr)_{h,N}$ and $(\uhnl)_{h,N}$ are bounded in $L^2(\Omega;L^2(0,T;\lzlambda))$ independently of the discretization parameters $N\in\mathbb{N}^{\star}$ and $h\in \mathbb{R}_+^\star$. Additionally, $(\uhnl)_{h,N}$ is bounded in $L^2_{\mathcal{P}_T}\big(\Omega\times(0,T);L^2(\Lambda)\big)$.
\end{lem}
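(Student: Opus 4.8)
The plan is to translate the uniform discrete bound of Proposition \ref{bounds} into a bound on the piecewise-constant-in-time functions $\uhnr$ and $\uhnl$, using only the definitions \eqref{eq:notation_wh} and the identification of $\re^{d_h}$ with a subspace of $L^2(\Lambda)$. First I would compute, directly from \eqref{eq:notation_wh},
\begin{align*}
\erwb\|\uhnr\|_{L^2(0,T;L^2(\Lambda))}^2\erwe
=\erwb\int_0^T\|\uhnr(t)\|_{L^2(\Lambda)}^2\,dt\erwe
=\Delta t\sum_{n=0}^{N-1}\erwb\|u_h^{n+1}\|_{L^2(\Lambda)}^2\erwe
\leq T\sup_{n\in\{1,\dots,N\}}\erwb\|u_h^n\|_{L^2(\Lambda)}^2\erwe,
\end{align*}
and likewise for $\uhnl$ one gets $\Delta t\sum_{n=0}^{N-1}\erwb\|u_h^n\|_{L^2(\Lambda)}^2\erwe$, where now the index runs over $\{0,\dots,N-1\}$; here the term $\|u_h^0\|_{L^2(\Lambda)}^2$ is controlled by $\|u_0\|_{L^2(\Lambda)}^2$ via Lemma \ref{bound_u0} and Assumption $\mathscr{A}_1$. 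In both cases the right-hand side is bounded by $T\,\Upsilon$ (or by $T(\Upsilon+\erwb\|u_0\|_{L^2(\Lambda)}^2\erwe)$), with $\Upsilon$ the constant from \eqref{uhnboundbis} of Proposition \ref{bounds}, which depends only on $u_0$, $C_{L_g}$, $L_\beta$, $|\Lambda|$ and $T$ — in particular not on $N$ or $h$. This gives the claimed uniform $L^2(\Omega;L^2(0,T;L^2(\Lambda)))$ bound.

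For the second assertion, I would argue that $\uhnl$ is predictable. By Proposition \ref{210609_prop1}, for each $n$ the random vector $u_h^n$ is $\mathcal{F}_{t_n}$-measurable, hence $u_h^n\mathds{1}_{(t_n,t_{n+1}]}$ is a predictable process (it is left-continuous and adapted, or directly of the generating form $A\times(s,t]$ with $A\in\mathcal{F}_s$ up to the convention on the endpoints), and $u_h^0\mathds{1}_{\{0\}}$ is $\mathcal{F}_0\times\{0\}$-measurable; summing finitely many such processes, $\uhnl$ belongs to $L^2_{\mathcal{P}_T}\big(\Omega\times(0,T);L^2(\Lambda)\big)$, with the norm already estimated above. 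Note $\uhnr$ is \emph{not} claimed predictable, consistent with the remark following \eqref{eq:notation_wh}, since $u_h^{n+1}$ is only $\mathcal{F}_{t_{n+1}}$-measurable while it multiplies $\mathds{1}_{[t_n,t_{n+1})}$.

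There is no serious obstacle here: the estimate is a one-line consequence of Proposition \ref{bounds} together with the time-step normalization $\Delta t\cdot N = T$, and the measurability claim is immediate from Proposition \ref{210609_prop1}. The only mild point of care is the bookkeeping of which time index ($n$ versus $n+1$, and whether $0$ or $N$ is included) appears for the left versus the right approximation, and correspondingly invoking Lemma \ref{bound_u0} to absorb the initial term in the left case.
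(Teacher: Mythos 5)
Your proof is correct and follows essentially the same route as the paper: the $L^2(\Omega;L^2(0,T;L^2(\Lambda)))$ bound is read off directly from Proposition \ref{bounds} (with Lemma \ref{bound_u0} absorbing the $u_h^0$ term for the left approximation), and predictability of $\uhnl$ follows from the $\mathcal{F}_{t_n}$-measurability of $u_h^n$ given by Proposition \ref{210609_prop1}, which makes $\uhnl$ an elementary adapted process. You merely spell out the bookkeeping that the paper leaves implicit.
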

\begin{proof} The boundedness of the sequences in $L^2(\Omega;L^2(0,T;\lzlambda))$ is a direct consequence of Proposition~\ref{bounds}. The predictability of $(\uhnl)_{h,N}$ with values in $\lzlambda$ is a consequence of the $\mathcal{F}_{\tn}$-measurability of $\unk$ for all $n\in\{0, ..., N\}$ and all $K\in\Tau$. Indeed, by construction, $(\uhnl)_{h,N}$ is then an elementary process adapted to the filtration $(\mathcal{F}_t)_{t\geq 0}$ and so it is predictable.
\end{proof}

Thanks to Proposition~\ref{bounds} we can also obtain a $L^2(\Omega;L^2(0,T;L^2(\Lambda)))$-bound on the weak gradients of the finite-volume approximations.

\begin{lem}\label{remarkuhnrboundintomega}
There exists a constant $\ctel{K1}\geq 0$ depending only on $u_0$, $C_{L_g}$, $L_\beta$, $|\Lambda|$ and $T$  such that
\begin{align}\label{uhnrboundinotomega}
\int_0^T\erwb|\uhnr(t)|_{1,h}^2\erwe dt\leq \cter{K1}.
\end{align}
\end{lem}
\begin{proof} This estimate follows directly from Proposition~\ref{bounds}.
\end{proof}

\begin{lem}\label{boundguhnlr}
The sequences $(g(\uhnr))_{h,N}$, $(g(\uhnl))_{h,N}$, $(\beta(\uhnr))_{h,N}$, and $(\beta(\uhnl))_{h,N}$ are bounded in $L^2(\Omega;L^2(0,T;\lzlambda))$ independently of the discretization parameters $N\in\mathbb{N}^{\star}$ and $h\in \mathbb{R}_+^{\star}$.
\end{lem}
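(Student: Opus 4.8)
The plan is to prove Lemma~\ref{boundguhnlr} by reducing everything to the already-established $L^2(\Omega;L^2(0,T;L^2(\Lambda)))$-bounds on $(\uhnr)_{h,N}$ and $(\uhnl)_{h,N}$ coming from Lemma~\ref{210611_lem01}, and then exploiting the linear growth bound \eqref{H3} for $g$ and the Lipschitz property $\mathscr{A}_3$ (with $\beta(0)=0$) for $\beta$. These two growth conditions turn a bound on $u$ into a bound on $g(u)$ and $\beta(u)$ pointwise in $(\omega,t,x)$, which, after integration, gives the claim. So the proof should be short: the content is really just "composition with a function of at most linear growth preserves $L^2$-boundedness."

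More precisely, I would argue as follows. Fix an admissible mesh $\Tau$ with size $h$ and $N\in\mathbb{N}^\star$. Since $\uhnr$ and $\uhnl$ are piecewise constant in $(t,x)$, for each $(\omega,t,x)$ one has, using \eqref{H3},
\begin{align*}
|g(\uhnr(t,x))|^2\leq C_{L_g}\bigl(1+|\uhnr(t,x)|^2\bigr),
\end{align*}
and the same with $\uhnl$ in place of $\uhnr$. Integrating over $\Lambda$, then over $(0,T)$, then taking expectations, one gets
\begin{align*}
\erwb\|g(\uhnr)\|_{L^2(0,T;L^2(\Lambda))}^2\erwe\leq C_{L_g}\bigl(T|\Lambda|+\erwb\|\uhnr\|_{L^2(0,T;L^2(\Lambda))}^2\erwe\bigr),
\end{align*}
and the right-hand side is bounded uniformly in $h$ and $N$ by Lemma~\ref{210611_lem01}. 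The identical computation works for $g(\uhnl)$. For $\beta$, the Lipschitz property together with $\beta(0)=0$ gives $|\beta(r)|\leq L_\beta|r|$ for all $r\in\R$, hence $|\beta(\uhnr(t,x))|^2\leq L_\beta^2|\uhnr(t,x)|^2$ pointwise, and integrating yields
\begin{align*}
\erwb\|\beta(\uhnr)\|_{L^2(0,T;L^2(\Lambda))}^2\erwe\leq L_\beta^2\,\erwb\|\uhnr\|_{L^2(0,T;L^2(\Lambda))}^2\erwe,
\end{align*}
again uniformly bounded by Lemma~\ref{210611_lem01}, and likewise for $\beta(\uhnl)$. This establishes all four bounds.

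There is essentially no obstacle here; the only point requiring a word of care is measurability, namely that $g(\uhnr)$, $g(\uhnl)$, $\beta(\uhnr)$, $\beta(\uhnl)$ genuinely define elements of $L^2(\Omega;L^2(0,T;L^2(\Lambda)))$. This is immediate because $g$ and $\beta$ are continuous (indeed Lipschitz) by $\mathscr{A}_2$ and $\mathscr{A}_3$, and the finite-volume approximations are, by construction, measurable with respect to $(\omega,t,x)$ (each coefficient $\unk$ is an $\mathcal{F}_{t_n}$-measurable random variable and the functions are piecewise constant), so the compositions are Carathéodory functions of measurable maps and hence measurable. Since the bounds obtained depend only on $C_{L_g}$, $L_\beta$, $|\Lambda|$, $T$ and the uniform constants from Lemma~\ref{210611_lem01} (which in turn depend only on $u_0$, $C_{L_g}$, $L_\beta$, $|\Lambda|$, $T$), they are independent of the discretization parameters, which is exactly what is asserted.
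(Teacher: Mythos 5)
Your proof is correct and follows exactly the paper's (very brief) argument: boundedness of $(\uhnr)_{h,N}$ and $(\uhnl)_{h,N}$ from Lemma~\ref{210611_lem01} combined with the linear growth of $g$ via \eqref{H3} and the Lipschitz property of $\beta$ with $\beta(0)=0$. You have merely written out the pointwise estimates and integration steps that the paper leaves implicit.
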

\begin{proof} It is a direct consequence of the boundedness of the sequences $(u_{h,N}^r)_{h,N}$ and $(u_{h,N}^l)_{h,N}$ in $L^2(\Omega;L^2(0,T;L^2(\Lambda)))$ given by Lemma \ref{210611_lem01} and of the Lipschitz nature of $g$ and $\beta$.
\end{proof}
\begin{lem}\label{boundsnguhnlr}
There exists a constant $\ctel{K3}\geq 0$ depending only on $u_0$, $L_g$, $L_\beta$, $|\Lambda|$ and $T$ such that 
\begin{align}\label{uhnrboundinotomegabis}
\int_0^T\erwb|g(\uhnr(t))|_{1,h}^2\erwe dt\leq \cter{K3}.
\end{align}
\end{lem}
\begin{proof}After noticing that:
\begin{eqnarray*}
\int_0^T\erwb|g(\uhnr(t))|_{1,h}^2\erwe dt& \leq& L_g^2\int_0^T\erwb|\uhnr(t)|_{1,h}^2\erwe dt,
\end{eqnarray*}
the result is immediate thanks to Lemma \ref{remarkuhnrboundintomega}.
\end{proof}

\section{Convergence of the finite-volume scheme}\label{ConvFVscheme}

Now, we have all the necessary material to pass to the limit in the numerical scheme.

In the sequel,  let $(\Tau_m)_{m\in\mathbb{N}}$ be a sequence of admissible meshes of $\Lambda$ in the sense of Definition \ref{defmesh} such that the mesh size $h_m$ tends to $0$ when $m$ tends to $+\infty$ and let $(N_m)_{m\in\mathbb{N}} \subset\mathbb{N}^\star$ be a sequence with $\lim_{m\rightarrow+\infty} N_m=+\infty$, and $\Delta t_m:=\frac{T}{N_m}$.\\ 
For the sake of simplicity, for $m\in\mathbb{N}$, we shall use the notations $\Tau=\Tau_m$, $h=\operatorname{size}(\Tau_m)$, $\Delta t=\Delta t_m$, and $N=N_m$ when the $m$-dependency is not useful for the understanding of the reader.

\subsection{Weak convergence of finite-volume approximations}
Firstly, thanks to the bounds on the discrete solutions, we obtain the following weak convergences.

\begin{prop}\label{addreg u}
There exist not relabeled subsequences of $(u_{h,N}^r)_m$ and of $(u_{h,N}^l)_m$ respectively and a process $u\in L^2_{\mathcal{P}_T}\big(\Omega\times(0,T);H^1(\Lambda)\big)$ such that 
\[u_{h,N}^l\rightarrow u \ \text{and} \ \uhnr\rightarrow u,\text{ both weakly in }L^2(\Omega;L^2(0,T;L^2(\Lambda)))\text{ as }m\rightarrow+\infty.\]
\end{prop}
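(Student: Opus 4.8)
The plan is to combine the uniform bounds established in Section~\ref{estimates} with weak compactness arguments in Hilbert spaces, and then to verify that the three functional properties claimed for the limit---square-integrability of the gradient, predictability, and being a common limit of both approximation families---are preserved in the passage to the limit.

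First I would note that by Lemma~\ref{210611_lem01} the sequences $(\uhnr)_m$ and $(\uhnl)_m$ are bounded in the Hilbert space $L^2(\Omega;L^2(0,T;L^2(\Lambda)))$, so by the Banach--Alaoglu theorem (reflexivity) we can extract not-relabeled subsequences converging weakly in that space, say $\uhnr\rightharpoonup u^r$ and $\uhnl\rightharpoonup u^l$. Second, I would upgrade this: since by Lemma~\ref{remarkuhnrboundintomega} the discrete $H^1$-seminorms $\int_0^T\erwb|\uhnr(t)|_{1,h}^2\erwe\,dt$ and $\int_0^T\erwb|\uhnl(t)|_{1,h}^2\erwe\,dt$ are bounded uniformly, the discrete gradients $\nabla^h\uhnr$ and $\nabla^h\uhnl$ (piecewise constant on the diamonds, cf. Remark~\ref{remarkforuhnrboundiii}) are bounded in $L^2(\Omega;L^2(0,T;(L^2(\Lambda))^2))$; extracting a further subsequence, they converge weakly to some limit. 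The standard finite-volume argument (as in \cite{gal}, adapted to the present stochastic setting by integrating over $\Omega$) then identifies this weak limit as $\nabla u^r$ (resp. $\nabla u^l$) in the distributional sense, together with the homogeneous Neumann condition, yielding $u^r,u^l\in L^2(\Omega;L^2(0,T;H^1(\Lambda)))$.

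Third, I would identify the two limits: $u^r=u^l=:u$. This follows because $\uhnr-\uhnl=\sum_{n=0}^{N-1}(u_h^{n+1}-u_h^n)\mathds{1}_{[t_n,t_{n+1})}$, and by Proposition~\ref{bounds} we have $\erwb\sum_{k=0}^{N-1}\|u_h^{k+1}-u_h^k\|_{L^2(\Lambda)}^2\erwe\leq K_0/2$, hence
\[
\erwb\int_0^T\|\uhnr(t)-\uhnl(t)\|_{L^2(\Lambda)}^2\,dt\erwe=\Delta t\sum_{k=0}^{N-1}\erwb\|u_h^{k+1}-u_h^k\|_{L^2(\Lambda)}^2\erwe\leq \frac{K_0\,\Delta t}{2}\xrightarrow[m\to\infty]{}0,
\]
so $\uhnr-\uhnl\to 0$ strongly, and therefore the weak limits coincide. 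Fourth, for predictability: each $\uhnl$ is, by Lemma~\ref{210611_lem01}, an elementary predictable process with values in $L^2(\Lambda)$, hence lies in the closed subspace $L^2_{\mathcal{P}_T}\big(\Omega\times(0,T);L^2(\Lambda)\big)$ of $L^2(\Omega\times(0,T);L^2(\Lambda))$; a closed subspace of a Hilbert space is weakly closed, so the weak limit $u$ again belongs to $L^2_{\mathcal{P}_T}\big(\Omega\times(0,T);L^2(\Lambda)\big)$. Combining the three pieces gives $u\in L^2(\Omega;L^2(0,T;H^1(\Lambda)))\cap L^2_{\mathcal{P}_T}\big(\Omega\times(0,T);L^2(\Lambda)\big)$ with the asserted weak convergences.

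The main obstacle is the second step: identifying the weak limit of the discrete gradients with the genuine weak gradient of $u$, and simultaneously recovering the Neumann boundary condition. The classical deterministic proof (test against a smooth vector field, sum by parts over edges using \eqref{PInt}-type rearrangements, and control the consistency error by the mesh size via the regularity bound \eqref{hoverdkl}) has to be carried out with an extra integration over $\Omega$ and with test functions of the form $\varphi(\omega,t,x)$; the mesh-dependence of the discrete gradient operator means one must argue that $\nabla^h\uhnr$ converges weakly while $\uhnr$ itself converges weakly to the same underlying $u$, which requires the uniform bound on $\reg$. Everything else is a routine application of Hilbert-space weak compactness and the estimates already in hand.
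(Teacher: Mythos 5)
Your proposal is correct and follows essentially the same route as the paper: weak compactness from the $L^2$ bounds of Lemma~\ref{210611_lem01}, the discrete-gradient bound from Lemma~\ref{remarkuhnrboundintomega} combined with the consistency argument of \cite{gal} to obtain $u\in L^2(\Omega;L^2(0,T;H^1(\Lambda)))$, the estimate $\Delta t\sum_k\erwb\|u_h^{k+1}-u_h^k\|_{L^2(\Lambda)}^2\erwe\lesssim K_0\Delta t$ from Proposition~\ref{bounds} to identify the two weak limits, and preservation of predictability under weak limits. Your observation that $L^2_{\mathcal{P}_T}\big(\Omega\times(0,T);L^2(\Lambda)\big)$ is a closed, hence weakly closed, subspace is exactly the justification behind the paper's statement that predictability is ``inherited at the limit.''
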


\begin{proof}
From Lemma \ref{210611_lem01} it follows that the sequences $(u_{h,N}^r)_m$, $(\uhnl)_m$ respectively are bounded in $L^2(\Omega;L^2(0,T;L^2(\Lambda)))$, thus, up to a not relabeled subsequence, they are weakly convergent in $L^2(\Omega;L^2(0,T;L^2(\Lambda)))$ towards possibly distinct elements $u$, $\tilde{u}$ respectively. Moreover, from Lemma \ref{remarkuhnrboundintomega} and the fact that (see \cite[Remark~2.7]{BNSZ22})
\begin{align}\label{linkgradsnh1}
\|\nabla^h u^r_{h,N}\|_{(L^2(\Lambda))^2}^2
=\sum_{\sigma=K|L\in\edgesint}\frac{d_{K|L}\times m_\sigma}{d}\left|d\frac{u^{n+1}_K-u^{n+1}_L}{d_{K|L}}\right|^2=d|u^r_{h,N}|_{1,h}^2,
\end{align}
it follows that
\[\| \nabla^h \uhnr\|^2_{L^2(\Omega; L^2(0,T;L^2(\Lambda)))}\leq d \cter{K1}.\]  
Consequently, there exists $\chi\in L^2(\Omega;L^2(0,T;L^2(\Lambda)))$ such that, passing to a not relabeled subsequence if necessary,
$\nabla^h \uhnr\rightarrow \chi$ weakly in $L^2(\Omega;L^2(0,T;L^2(\Lambda)))$ for $m\rightarrow+\infty$. With similar arguments as in \cite[Lemma 2]{eymardgallouet} and \cite[Theorem 14.3]{gal} we get the additional regularity $u\in L^2(\Omega;L^2( 0,T;H^1(\Lambda)))$ and $\chi=\nabla u$. Since, by Proposition~\ref{bounds},
\begin{align}\label{210824_05}
\erww{\|\uhnr-\uhnl\|_{L^2(0,T;\lzlambda)}^2}=\Delta t\erww{\sum_{n=0}^{N-1}\|u_h^{n+1}-u_h^n\|_\lzlambda^2}
\leq K_0\Delta t,
\end{align}
it follows that $(\uhnr-\uhnl)_m$ converges strongly to $0$ in $L^2(\Omega;L^2(0,T;L^2(\Lambda)))$ for $m\rightarrow+\infty$, hence also weakly and therefore $u=\tilde{u}$. Note that the predictability property of $u$ with values in $L^2(\Lambda)$ is inherited from $(u_{h,N}^l)_m$ at the limit. Now, since we also have $u\in L^2(\Omega;L^2( 0,T;H^1(\Lambda)))$, as pointed out in Remark \ref{240202_01}, the predictability of $u$ with values in $H^1(\Lambda)$ follows from the standard theory of Bochner spaces. 
\end{proof}

\begin{lem}\label{CVguhnl} There exist not relabeled subsequences of $(g(u_{h,N}^r))_m$ and $(g(u_{h,N}^l))_m$, and a  process $g_u$ in $L^2_{\mathcal{P}_T}\big(\Omega\times(0,T); H^1(\Lambda)\big)$ such that
$$g(u_{h,N}^r)\rightarrow g_u \text{ and } g(u_{h,N}^l)\rightarrow g_u,\text{ both weakly in }L^2(\Omega;L^2(0,T;L^2(\Lambda)))\text{ as }m\rightarrow+\infty.$$
\end{lem}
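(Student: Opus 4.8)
The plan is to extract the weak limit of $(g(u_{h,N}^l))_m$ and $(g(u_{h,N}^r))_m$ from the stability estimates already available, and to exhibit enough regularity (an $H^1$-bound on the discrete gradients of $g$ composed with the discrete solution) to land in $L^2(\Omega;L^2(0,T;H^1(\Lambda)))$. First I would invoke Lemma~\ref{boundguhnlr}, which gives that $(g(u_{h,N}^r))_m$ and $(g(u_{h,N}^l))_m$ are bounded in $L^2(\Omega;L^2(0,T;L^2(\Lambda)))$; hence, up to a not relabeled subsequence, they converge weakly in that Hilbert space to limits $g_u$ and $\tilde{g}_u$ respectively. Next I would use Lemma~\ref{boundsnguhnlr} together with Remark~\ref{remarkforuhnrboundiii}: the bound $\int_0^T \erwb |g(\uhnr(t))|_{1,h}^2 \erwe\,dt \le \cter{K3}$ translates into $\|\nabla^h g(\uhnr)\|^2_{L^2(\Omega;L^2(0,T;L^2(\Lambda)))} \le 2\cter{K3}$, so a further subsequence gives $\nabla^h g(\uhnr) \rightharpoonup \psi$ weakly in $L^2(\Omega;L^2(0,T;L^2(\Lambda)))$ for some $\psi$.

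Then I would identify the weak limit of the discrete gradient with the distributional gradient of $g_u$, exactly as in the proof of Proposition~\ref{addreg u}: by the arguments of \cite[Lemma 2]{eymardgallouet} and \cite[Theorem 14.3]{gal}, the weak limit of a bounded sequence of discrete gradients (with the underlying functions converging weakly in $L^2$) is the weak gradient of the limit, so $g_u \in L^2(\Omega;L^2(0,T;H^1(\Lambda)))$ and $\psi = \nabla g_u$. To see that the left and right limits coincide, I would use that $g$ is Lipschitz (Assumption~$\mathscr{A}_2$) together with the estimate \eqref{210824_05}: since $\|g(\uhnr)-g(\uhnl)\|_{L^2(\Omega;L^2(0,T;L^2(\Lambda)))} \le L_g \|\uhnr-\uhnl\|_{L^2(\Omega;L^2(0,T;L^2(\Lambda)))} \le L_g \sqrt{K_0\,\Delta t} \to 0$, the two sequences have the same weak limit, hence $\tilde{g}_u = g_u$. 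Finally, the predictability of $g_u$ with values in $H^1(\Lambda)$ is inherited at the limit from $(g(\uhnl))_m$, which is predictable because $(\uhnl)_m$ is (Lemma~\ref{210611_lem01}) and $g$ is continuous; this places $g_u$ in $L^2_{\mathcal{P}_T}\big(\Omega\times(0,T);H^1(\Lambda)\big)$.

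I do not expect a serious obstacle here: every ingredient is already in place, and the proof is essentially a replay of Proposition~\ref{addreg u} with $g(u_{h,N}^{r/l})$ in place of $u_{h,N}^{r/l}$. The only point requiring a little care is the identification $\psi = \nabla g_u$ via the discrete-gradient-to-weak-gradient lemma, since that argument must be applied to the sequence $(g(\uhnr))_m$ rather than to $(\uhnr)_m$; but since $g(\uhnr)$ is itself a piecewise constant finite-volume function on the same mesh (being $g$ evaluated at each $u_K^{n+1}$), the cited results apply verbatim. The coincidence of the two limits is the other place a reader might want detail, but the Lipschitz-plus-\eqref{210824_05} argument dispatches it cleanly.
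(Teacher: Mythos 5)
Your proposal is correct and follows essentially the same route as the paper: boundedness from Lemma~\ref{boundguhnlr} gives weak subsequential limits, the Lipschitz property of $g$ combined with \eqref{210824_05} forces the two limits to coincide, the discrete-gradient bound of Lemma~\ref{boundsnguhnlr} plus the arguments of \cite[Lemma 2]{eymardgallouet} and \cite[Theorem 14.3]{gal} yield $g_u\in L^2(\Omega;L^2(0,T;H^1(\Lambda)))$, and predictability passes to the weak limit. The only cosmetic difference is that you run the discrete-gradient argument on $g(\uhnr)$ while the paper writes it for $g(\uhnl)$; both work, and your pairing of the bound $2\cter{K3}$ with $g(\uhnr)$ is in fact the consistent one given \eqref{uhnrboundinotomegabis}.
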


\begin{proof} The existence of a common weak limit $g_u$ in $L^2(\Omega;L^2( 0,T;L^2(\Lambda)))$ is a direct consequence of the boundedness results on $(g(u_{h,N}^r))_m$ and $(g(u_{h,N}^l))_m$ in the same space stated in Lemma \ref{boundguhnlr}, the Lipschitz property of $g$ and Inequality \eqref{210824_05}. Moreover, from Lemma \ref{boundsnguhnlr} and the equality (\ref{linkgradsnh1}) applied to $(g(u^r_{h,N}))_{h,N}$, it follows that 
\[\| \nabla^h g(\uhnr)\|^2_{L^2(\Omega; L^2(0,T;L^2(\Lambda)))}\leq d\cter{K3}.\]  Consequently, there exists $\tilde{\chi}\in L^2(\Omega;L^2(0,T;L^2(\Lambda)))$ such that, passing to a not relabeled subsequence if necessary,
$\nabla^h g(\uhnr)\rightarrow \tilde{\chi}$ weakly in $L^2(\Omega;L^2(0,T;L^2(\Lambda)))$ as $m\rightarrow+\infty$. With similar arguments as in \cite[Lemma 2]{eymardgallouet} and \cite[Theorem 14.3]{gal} we get the additional regularity $g_u\in L^2(\Omega;L^2( 0,T;H^1(\Lambda)))$ and $\tilde{\chi}=\nabla g_u$. Note that the predictability property of $g_u$ with values in $H^1(\Lambda)$ is obtained using  the same arguments as in the proof of Proposition \ref{addreg u}. 
\end{proof}
\begin{remark}
Note that the information $g_u\in L^2_{\mathcal{P}_T}\big(\Omega\times(0,T); H^1(\Lambda)\big)$ will be one of the keys points in the proof of Proposition \ref{PTTL} below.
\end{remark}

\begin{lem}\label{CVbetauhnl} There exist not relabeled subsequences of $(\beta(u_{h,N}^r))_m$ and $(\beta(u_{h,N}^l))_m$, and a process $\beta_u$ in $L^2_{\mathcal{P}_T}\big(\Omega\times(0,T); L^2(\Lambda)\big)$ such that
$$\beta(u_{h,N}^r)\rightarrow \beta_u \text{ and } \beta(u_{h,N}^l)\rightarrow \beta_u,\text{ both weakly in }L^2(\Omega;L^2(0,T;L^2(\Lambda)))\text{ as }m\rightarrow+\infty.$$
\end{lem}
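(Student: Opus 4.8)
The plan is to mirror the proof of Lemma~\ref{CVguhnl}, but in the simpler setting where no gradient control is needed. First I would invoke Lemma~\ref{boundguhnlr}, which states that $(\beta(u_{h,N}^r))_m$ and $(\beta(u_{h,N}^l))_m$ are bounded in the Hilbert space $L^2(\Omega;L^2(0,T;L^2(\Lambda)))$ independently of the discretization parameters. Since bounded sequences in a Hilbert space admit weakly convergent subsequences, I extract a not relabeled subsequence of $(\beta(u_{h,N}^r))_m$ converging weakly to some $\beta_u\in L^2(\Omega;L^2(0,T;L^2(\Lambda)))$, and, along a further subsequence, a weak limit $\tilde\beta_u$ of $(\beta(u_{h,N}^l))_m$ in the same space.

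The remaining point is to identify the two limits, i.e.\ to show $\beta_u=\tilde\beta_u$. For this I would use the Lipschitz property of $\beta$ (Assumption~$\mathscr{A}_3$) together with Inequality~\eqref{210824_05}: one has
\begin{align*}
\erww{\|\beta(\uhnr)-\beta(\uhnl)\|_{L^2(0,T;\lzlambda)}^2}\leq L_\beta^2\,\erww{\|\uhnr-\uhnl\|_{L^2(0,T;\lzlambda)}^2}\leq L_\beta^2 K_0\Delta t,
\end{align*}
which tends to $0$ as $m\to+\infty$ since $\Delta t=\Delta t_m=T/N_m\to 0$. Hence $(\beta(\uhnr)-\beta(\uhnl))_m$ converges strongly — and therefore weakly — to $0$ in $L^2(\Omega;L^2(0,T;L^2(\Lambda)))$, so that by uniqueness of weak limits $\beta_u=\tilde\beta_u$. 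This gives a common weak limit $\beta_u$ for both sequences along a common (not relabeled) subsequence, which is exactly the assertion of the lemma.

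There is essentially no obstacle here: unlike in Lemma~\ref{CVguhnl}, we do not claim any $H^1$-regularity or predictability of $\beta_u$ (consistent with $\mathscr{A}_3$ not providing, and the variational formulation not requiring, spatial regularity of the $\beta(u)\,dt$ term), so no discrete-gradient bound à la Lemma~\ref{boundsnguhnlr} or compactness argument from \cite{eymardgallouet}, \cite{gal} is needed. The only mild care is bookkeeping of successive subsequence extractions, which is standard. I would therefore keep the proof to a couple of lines, citing Lemma~\ref{boundguhnlr}, the Lipschitz bound above, and \eqref{210824_05}.
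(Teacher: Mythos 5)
Your proof is correct and follows exactly the paper's argument: boundedness from Lemma~\ref{boundguhnlr} gives weakly convergent subsequences, and the Lipschitz property of $\beta$ combined with Inequality~\eqref{210824_05} identifies the two weak limits. You merely spell out the Lipschitz estimate that the paper leaves implicit.
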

\begin{proof} The weak convergence in $L^2(\Omega;L^2( 0,T;L^2(\Lambda)))$ of the sequences $(\beta(u_{h,N}^r))_m$ and  $(\beta(u_{h,N}^l))_m$ is a direct consequence of their boundedness properties in such a space stated in Lemma \ref{boundguhnlr}. The fact that the weak limit $\beta_u$ is common is due to Inequality \eqref{210824_05} and again the Lipschitz property of $\beta$. At last, the predictability property of $\beta_u$ with values in $L^2(\Lambda)$ is inherited from $(g(u_{h,N}^l))_m$ at the limit.
\end{proof}

\begin{prop}\label{PTTL} The weak limit $u$ of our finite-volume scheme \eqref{eq:def_u0}-\eqref{equationapprox} introduced  in Proposition \ref{addreg u} has $\mathds{P}$-a.s. continuous paths with values in $L^2(\Lambda)$ and satisfies for all $t\in [0,T]$,
\[u(t)=u_0+\int_0^t\Delta u(s)\,ds-\int_0^t\diver\big(\mathbf{v}(s,\cdot)u(s)\big)ds+\int_0^t g_u(s)\,dW(s)+\int_0^t \beta_u(s)\,ds,
\quad\]
in $L^2(\Lambda)$ and $\mathds{P}$-a.s. in $\Omega$, where $\Delta$ denotes the Laplace operator on $H^1(\Lambda)$ associated with the formal Neumann boundary conditions, and $g_u$ and $\beta_u$ respectively are given by Lemmas \ref{CVguhnl} and \ref{CVbetauhnl}.
\end{prop}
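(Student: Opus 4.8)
The plan is to pass to the limit in a suitable weak-in-space, integrated-in-time formulation of the scheme \eqref{equationapprox} and to identify each term with the corresponding integral appearing in the claimed identity. First I would fix a test direction: for an arbitrary $\psi\in\mathscr{C}^\infty_c(\Lambda)$ (or rather its piecewise-constant interpolation $\psi_h\equiv(\psi(x_K))_{K\in\Tau}$), multiply \eqref{equationapprox} by $m_K\psi(x_K)\Delta t$, sum over $K\in\Tau$ and over $k\in\{0,\ldots,n-1\}$ with $t_n\approx t$, and take an inner product against an arbitrary $\mathcal F$-measurable bounded random variable and an arbitrary $L^\infty(0,T)$ test function so that the identity is tested in $L^2(\Omega\times(0,T))$. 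This produces a discrete weak formulation
\begin{align*}
\big(u_h^n-u_h^0,\psi_h\big)_{L^2(\Lambda)}
= -\Delta t\sum_{k=0}^{n-1}\big(\nabla^h u_h^{k+1},\nabla^h\psi_h\big)
+ (\text{convection term})
+ \sum_{k=0}^{n-1}g(u_h^k)(W^{k+1}-W^k)(\cdot,\psi_h)
+ \Delta t\sum_{k=0}^{n-1}\big(\beta(u_h^{k+1}),\psi_h\big).
\end{align*}
Rewriting the left-hand side via the left/right approximations $\uhnl,\uhnr$ and the right-hand sums as integrals in time against the piecewise-constant-in-time functions, this is exactly a discrete analogue of the claimed identity tested against $\psi$.

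Next I would pass to the limit term by term using the weak convergences already established. The elliptic term converges because $\nabla^h\uhnr\rightharpoonup\nabla u$ weakly in $L^2(\Omega;L^2(0,T;L^2(\Lambda)))$ (Proposition \ref{addreg u}) while $\nabla^h\psi_h\to\nabla\psi$ strongly; the It\^o term converges because $g(\uhnl)\rightharpoonup g_u$ weakly (Lemma \ref{CVguhnl}) and, crucially, $\uhnl$ (hence $g(\uhnl)$) is predictable, so that the discrete stochastic integral $\sum_k g(u_h^k)(W^{k+1}-W^k)$ identifies in the limit with the It\^o integral $\int_0^\cdot g_u\,dW$ — here one uses that the discrete noise increments reconstruct the Brownian stochastic integral of an adapted step process, and weak convergence together with the continuity (and adaptedness) of the It\^o map on $L^2_{\mathcal P_T}$ passes the limit; the reaction term converges via $\beta(\uhnr)\rightharpoonup\beta_u$ (Lemma \ref{CVbetauhnl}). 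For the convection term I would start from the divergence-free reformulation \eqref{equationapproxter} (or \eqref{equationapproxbis}), recognize $m_\sigma\vkskp$ as the exact flux of $\ve$ across $\sigma$, and show that the upwind-reconstructed convective flux, tested against $\psi_h$, converges to $\int_0^t\int_\Lambda \ve(s,x)u(s,x)\cdot\nabla\psi(x)\,dx\,ds$, i.e. to $-\int_0^t\big(\diver(\ve u(s)),\psi\big)\,ds$ after integrating by parts against $\ve\cdot\mathbf n=0$ on $\partial\Lambda$; the consistency error between the upwind value $u^{k+1}_\sigma$ and $u^{k+1}_K$ is controlled by $\|\ve\|_\infty\sum_\sigma m_\sigma|u_K^{k+1}-u_L^{k+1}|\lesssim \|\ve\|_\infty\,\reg^{1/2}\,h^{1/2}\,|u_h^{k+1}|_{1,h}\,|\Lambda|^{1/2}$, which tends to $0$ in $L^2(\Omega;L^2(0,T))$ thanks to the $H^1$-seminorm bound of Proposition \ref{bounds}. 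Passing finally to a density argument in $\psi$ over $H^1(\Lambda)$ (and using that $\nabla u,\diver(\ve u)\in L^2$) gives the identity in $L^2(\Lambda)$, $\mathds{P}$-a.s., for fixed $t$; the $\mathds{P}$-a.s. continuity of paths with values in $L^2(\Lambda)$ then follows because the right-hand side is, by construction, a sum of an absolutely continuous part and an It\^o integral with $L^2_{\mathcal P_T}$ integrand, hence has a continuous $L^2(\Lambda)$-valued modification, and one identifies $u$ with that modification for all $t$ simultaneously.

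The main obstacle I expect is the identification of the discrete stochastic term with the genuine It\^o integral $\int_0^t g_u(s)\,dW(s)$: weak convergence $g(\uhnl)\rightharpoonup g_u$ by itself is not obviously enough to pass to the limit inside a stochastic integral, and this is precisely the point where papers like \cite{BNSZ22} invoked martingale/Skorokhod machinery. The way around it here is to exploit that both $g(\uhnl)$ and $g_u$ live in the predictable space $L^2_{\mathcal P_T}\big(\Omega\times(0,T);L^2(\Lambda)\big)$ on which the It\^o isometry makes $f\mapsto\int_0^\cdot f\,dW$ a bounded \emph{linear} operator into $L^2(\Omega;\mathscr C([0,T];L^2(\Lambda)))$, so it is weak-weak continuous; combined with the fact that the discrete sum $\sum_k g(u_h^k)(W^{k+1}-W^k)$ is exactly $\int_0^{t_n}\widehat g_h\,dW$ for the adapted step process $\widehat g_h$ built from $g(u_h^k)$ on $[t_k,t_{k+1})$ (so it is literally in the image of that operator), one can test against an arbitrary element of the dual and move the limit across. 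A secondary technical point is the time-discretization mismatch between $t_n$ and $t$ (one has $|t_n-t|\le\Delta t\to0$) and between $\uhnl$ and $\uhnr$ (controlled by \eqref{210824_05}), which only contributes vanishing remainders; and one must be a little careful that $g_u$ is the weak limit of the \emph{left} values $g(\uhnl)$ (the adapted ones) so that the stochastic integral is well defined, while $\beta_u$ may equally be taken as the limit of either family thanks to \eqref{210824_05}.
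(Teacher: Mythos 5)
Your overall architecture --- test the scheme against space--time test functions, pass to the limit term by term, and handle the noise by viewing the discrete stochastic sum as the It\^o integral of an adapted step process and exploiting the weak--weak continuity of the linear It\^o map on $L^2_{\mathcal P_T}\big(\Omega\times(0,T);L^2(\Lambda)\big)$ --- is exactly the paper's, and your treatment of the stochastic term coincides with the study of $\hat S_{2,m}$ there. But two steps would fail as written. The most serious is the diffusion term: you pass to the limit by pairing $\nabla^h\uhnr\rightharpoonup\nabla u$ (weak) against $\nabla^h\psi_h\to\nabla\psi$ (claimed strong). For the TPFA gradient this strong convergence is false: $\nabla^h_\sigma\psi_h=2\frac{\psi(x_L)-\psi(x_K)}{\dkl}\mathbf n_{K,\sigma}$ is, up to $O(h)$, \emph{twice} the projection of $\nabla\psi$ onto the edge normal, so it converges to $\nabla\psi$ only weakly, with $\|\nabla^h\psi_h\|^2_{L^2}\to 2\|\nabla\psi\|^2_{L^2}$ (cf.\ Remark \ref{remarkforuhnrboundiii}); a weak--weak pairing cannot be passed to the limit. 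Moreover $\int_\Lambda\nabla^h u_h\cdot\nabla^h\psi_h\,dx=2\sum_{\sigma}\frac{m_\sigma}{\dkl}(u_K-u_L)(\psi_K-\psi_L)$ is twice the bilinear form actually produced by the scheme, so even formally your argument would land on $\tfrac12\int\nabla u\cdot\nabla\psi$. The paper instead transposes the discrete Laplacian onto the smooth test function and controls the consistency residual $R^\varphi_\sigma$ by $C_\varphi h$, paying with the $H^1$-seminorm bound of Lemma \ref{remarkuhnrboundintomega}; this forces the test functions to satisfy $\nabla\varphi\cdot\mathbf n=0$ on $\partial\Lambda$ and to be dense in $H^1(\Lambda)$ (\cite{Droniou}), whereas your $\psi\in\mathscr C^\infty_c(\Lambda)$ only span $H^1_0(\Lambda)$, which loses the Neumann condition and breaks the later $H^1(\Lambda)$--$H^1(\Lambda)^{\ast}$ duality step. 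A related quantitative slip occurs in the convection term: $\sum_\sigma m_\sigma|u^{k+1}_K-u^{k+1}_L|\le\sqrt{2|\Lambda|}\,|u_h^{k+1}|_{1,h}$ carries no factor $h^{1/2}$; the smallness of the upwind remainder comes from the variation $|\varphi(x_K)-\varphi(x)|\le h\|\nabla\varphi\|_\infty$ of the test function across a cell, as in the paper's estimate of $S_{4,m}-\tilde S_{4,m}$, not from the jump $|u_K-u_L|$ itself.

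The second gap is the conclusion. Your limit identity is obtained for a.e.\ $t$, and the ``absolutely continuous part'' $\int_0^t\big(\Delta u-\diver(\ve u)+\beta_u\big)\,ds$ is a priori only $H^1(\Lambda)^{\ast}$-valued, so by itself it yields path continuity in $H^1(\Lambda)^{\ast}$, not in $L^2(\Lambda)$. The paper upgrades this by using that $g_u\in L^2_{\mathcal P_T}\big(\Omega\times(0,T);H^1(\Lambda)\big)$ (Lemma \ref{CVguhnl}), so that $u-\int_0^{\cdot}g_u\,dW$ lies in $L^2(\Omega;L^2(0,T;H^1(\Lambda)))$ with time derivative in $L^2(0,T;H^1(\Lambda)^{\ast})$, and then invokes the Lions--Magenes type embedding into $\mathscr C([0,T];L^2(\Lambda))$; only after that does the identity hold for every $t$ in $L^2(\Lambda)$, the $L^2(\Lambda)$-valuedness of $\int_0^t\Delta u\,ds$ being read off a posteriori from the equation. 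This argument, and the $H^1$-regularity of $g_u$ that makes it possible, are missing from your sketch.
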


\begin{proof} Let $A\in\mathcal{A}$, $\xi\in \mathscr{D}(\re)$ with $\xi(T)=0$ and $\varphi\in \mathscr{D}(\re^d)$ with $\nabla\varphi\cdot\mathbf{n}=0$ on $\partial\Lambda$, where we denote $\mathscr{D}(D):=\mathscr{C}_c^\infty(D)$ for any open subset $D\subseteq \re^m, m\in\na^{\ast}$. We introduce the discrete function $\varphi_h : \Lambda\rightarrow \R$ defined by $\displaystyle\varphi_h(x)=\sum_{K\in \Tau}\mathds{1}_K(x)\varphi(x_K)$ for any $x\in \Lambda$. \\
For $K\in\Tau$, $n\in\{0,\dots,N-1\}$ and $t\in[t_n,t_{n+1})$ we multiply \eqref{equationapproxbis} with $\mathds{1}_A\xi(t)\varphi(x_K)$ to obtain
\begin{align}\label{220814}
\begin{split}
&\mathds{1}_A\xi(t)\frac{m_K}{\Delta t}\Big(u_K^{n+1}-u_K^n-g(u_K^n)\big(W^{n+1}-W^n\big)\Big)\varphi(x_K)\\
&+\mathds{1}_A\xi(t)\sum_{\sigma=K|L\in\edgesint\cap\edges_K}\frac{m_\sigma}{\dkl}(u_K^{n+1}-u_L^{n+1})\varphi(x_K)\\
&+\mathds{1}_A\xi(t)\sum_{\sigma\in\edgesint\cap\edges_K} m_\sigma \vksnp\big(u^{n+1}_{\sigma}-u^{n+1}_K\big)\varphi(x_K)\\
=\,&\mathds{1}_A\xi(t)m_K\beta(\unpk)\varphi(x_K).
\end{split}
\end{align}
Firstly, we sum \eqref{220814} over each control volume $K\in\Tau$, we integrate over each time interval $[t_{n},t_{n+1}]$, then we sum over $n=0,\dots,N-1$, and finally we take the expectation to obtain 
\begin{equation}\label{discretequforlimit}
S_{1,m}+S_{2,m}+S_{3,m}+S_{4,m}=S_{5,m}
\end{equation}
where
\begin{eqnarray*}
S_{1,m}&=&\mathbb{E}\left[\sum_{n=0}^{N-1}\int_{t_n}^{t_{n+1}}\sum_{K\in\Tau}\mathds{1}_A\xi(t)m_K\frac{u_K^{n+1}-u_K^n}{\Delta t}\varphi(x_K)\,dt\right]\\
S_{2,m}&=&-\mathbb{E}\left[\sum_{n=0}^{N-1}\int_{t_n}^{t_{n+1}}\sum_{K\in\Tau}\mathds{1}_A\xi(t)m_Kg(u_K^n)\frac{W^{n+1}-W^n}{\Delta t}\varphi(x_K)\,dt\right]\\
S_{3,m}&=&\mathbb{E}\left[\sum_{n=0}^{N-1}\int_{t_n}^{t_{n+1}}\sum_{K\in\Tau}\mathds{1}_A\xi(t)\sum_{\sigma=K|L\in\edgesint\cap\edges_K}\frac{m_\sigma}{\dkl}(u_K^{n+1}-u_L^{n+1})\varphi(x_K)\,dt\right]\\
S_{4,m}&=&\mathbb{E}\left[\sum_{n=0}^{N-1}\int_{t_n}^{t_{n+1}}\sum_{K\in\Tau}\mathds{1}_A\xi(t)\sum_{\sigma\in\edgesint\cap\edges_K} m_\sigma \vksnp\big(u^{n+1}_{\sigma}-u^{n+1}_K\big)\varphi(x_K)\, dt\right]\\
S_{5,m}&=&\mathbb{E}\left[\sum_{n=0}^{N-1}\int_{t_n}^{t_{n+1}}\sum_{K\in\Tau}\mathds{1}_A\xi(t)m_K \beta(\unpk)\varphi(x_K)\,dt\right].
\end{eqnarray*}
Let us study separately the limit as $m$ goes to $+\infty$ of $S_{1,m}$, $S_{2,m}$, $S_{3,m}$, $S_{4,m}$ and $S_{5,m}$.\medskip\\
$\bullet$ Study of $S_{1,m}$: By using Proposition \ref{addreg u} and a discrete integration by parts formula, one shows that up to a subsequence denoted in the same way 
\begin{eqnarray*}
S_{1,m}\xrightarrow[m\rightarrow +\infty]{}-\mathbb{E}\left[\mathds{1}_A\int_0^T\int_\Lambda u(t,x)\xi'(t)\varphi(x)\,dx\,dt\right]-\mathbb{E}\left[\mathds{1}_A\int_\Lambda u_0(x)\xi(0)\varphi(x)\,dx\right].
\end{eqnarray*}
Indeed, if we consider the following discrete integration by parts formula
\begin{eqnarray*}
\sum_{n=1}^{N}a_n(b_n-b_{n-1})=a_Nb_N-a_0b_0-\sum_{n=0}^{N-1}b_n(a_{n+1}-a_n)
\end{eqnarray*}
applied to $a_n=\unk$ and $b_n=\xi(\tn)$, one has since $\xi(T)=0$
\begin{eqnarray*}
&&\E\left[\mathds{1}_A\int_{\Delta t}^T\int_{\Lambda}\uhnl(t,x)\xi'(t-\Delta t)\varphi(x)\,dx\,dt\right]\\
&=&\E\left[\mathds{1}_A\sum_{n=1}^{N-1}\sum_{K\in\Tau}\int_{\tn}^{\tnp}\int_K\unk\xi'(t-\Delta t)\varphi(x)\,dx\,dt\right]\\
&=&\E\left[\mathds{1}_A\sum_{n=1}^{N-1}\sum_{K\in\Tau}\unk(\xi(\tn)-\xi(t_{n-1}))\int_K\varphi(x)\,dx\right]\\
&=&-\E\left[\mathds{1}_A\sum_{K\in\Tau} u^0_K\xi(0)\int_K\varphi(x)\,dx\right]-\E\left[\mathds{1}_A\sum_{K\in\Tau}\sum_{n=0}^{N-1}(\unpk-\unk)\xi(\tn)\int_K\varphi(x)\,dx\right].
\end{eqnarray*}
Firstly, thanks to the weak convergence of $(\uhnl)_m$ towards $u$ in $L^2(\Omega;L^2(0,T;L^2(\Lambda)))$ given by Proposition \ref{addreg u}, one gets
$$\E\left[\mathds{1}_A\int_{\Delta t}^T\int_{\Lambda}\uhnl(t,x)\xi'(t-\Delta t)\varphi(x)\,dx\,dt\right]\xrightarrow[m\rightarrow +\infty]{}\mathbb{E}\left[\mathds{1}_A\int_0^T\int_\Lambda u(t,x)\xi'(t)\varphi(x)\,dx\,dt\right].$$
Secondly, we have
\begin{eqnarray*}
&&\left|\E\left[\mathds{1}_A\sum_{K\in\Tau} u^0_K\xi(0)\int_K\varphi(x)\,dx\right]-\mathbb{E}\left[\mathds{1}_A\int_\Lambda u_0(x)\xi(0)\varphi(x)\,dx\right]\right|\\
&=&\left|\E\left[\mathds{1}_A\sum_{K\in\Tau} \xi(0)\int_K(u^0_K-u_0(x))\varphi(x)\,dx\right]\right|\\
&=&\left|\E\left[\mathds{1}_A\sum_{K\in\Tau} \xi(0)\int_K\left(\frac{1}{m_K}\int_Ku_0(y)-u_0(x)\,dy\right) \varphi(x)\,dx\right]\right|\\
&\leq&||\xi||_\infty||\varphi||_\infty\E\left[\sum_{K\in\Tau} \int_K\left(\frac{1}{m_K}\int_K|u_0(y)-u_0(x)|\,dy\right)\,dx\right],\\
\end{eqnarray*}
and since by Assumption $\mathscr{A}_1$, $u_0$ belongs particularly to $L^1(\Omega; L^1(\Lambda))$, this last term tends to $0$ as $m$ goes to $+\infty$ thank's to Lebesgue's dominated \smallskip convergence theorem.\smallskip \\
$\bullet$ Study of $S_{2,m}$: Thanks to Lemma \ref{CVguhnl} and the properties of the stochastic integral, one shows that, up to a subsequence denoted in the same way,
\begin{eqnarray*}
S_{2,m}\xrightarrow[m\rightarrow +\infty]{}\mathbb{E}\left[\mathds{1}_A\int_0^T\int_\Lambda \int_0^t g_u(s,x)\,dW(s)\xi'(t)\varphi(x)\,dx\,dt\right].
\end{eqnarray*}
To do so, we introduce the following terms
\begin{eqnarray*}
\tilde{S}_{2,m}&=&-\mathbb{E}\left[\sum_{n=0}^{N-1}\sum_{K\in\Tau}m_K\mathds{1}_A\xi(\tn)g(u_K^n)\big(W^{n+1}-W^n\big)\varphi(x_K)\right]\\
\hat{S}_{2,m}&=&-\mathbb{E}\left[\mathds{1}_A\int_{\Lambda}\int_0^T\xi(t)g(\uhnl(t,x))\,dW(t)\varphi(x)\,dx\right]\\
\end{eqnarray*}
and decompose $S_{2,m}$ as $$S_{2,m}=(S_{2,m}-\tilde{S}_{2,m})+(\tilde{S}_{2,m}-\hat{S}_{2,m})+\hat{S}_{2,m}$$
and study separately each term of this decomposition. \\
Firstly, we have by applying successively Cauchy-Schwarz inequality on $\Omega$ and then on the sum over $n\in\{0,..,N-1\}$ and $K\in \Tau$:
\begin{eqnarray*}
&&|S_{2,m}-\tilde{S}_{2,m}|^2\\
&=&\left|\mathbb{E}\left[\sum_{n=0}^{N-1}\int_{t_n}^{t_{n+1}}\sum_{K\in\Tau}m_K\mathds{1}_A\big(\xi(t)-\xi(\tn)\big)g(u_K^n)\frac{W^{n+1}-W^n}{\Delta t}\varphi(x_K)\,dt\right]\right|^2\\
&\leq&\left(\mathbb{E}\left[\sum_{n=0}^{N-1}\int_{t_n}^{t_{n+1}}\sum_{K\in\Tau}m_K\left|\mathds{1}_A\big(\xi(t)-\xi(\tn)\big)g(u_K^n)\frac{W^{n+1}-W^n}{\Delta t}\varphi(x_K)\right|\,dt\right]\right)^2\\
&\leq&\frac{(\Delta t)^2}{(\Delta t)^2}||\xi'||^2_\infty||\varphi||^2_\infty\left(\sum_{n=0}^{N-1}\dlt\sum_{K\in\Tau}m_K\mathbb{E}\Big[\left|\mathds{1}_Ag(u_K^n)\big(W^{n+1}-W^n\big)\right|\Big]\right)^2\\
&\leq&||\xi'||^2_\infty||\varphi||^2_\infty\left(\sum_{n=0}^{N-1}\sum_{K\in\Tau}\dlt m_K\left(\mathbb{E}[\mathds{1}_A]\right)^{\frac{1}{2}}\times  \left\{\mathbb{E}\Big[g^2(u_K^n)\big(W^{n+1}-W^n\big)^2\Big]\right\}^{\frac{1}{2}}\right)^2\\
&\leq&||\xi'||^2_\infty||\varphi||^2_\infty \left(\sum_{n=0}^{N-1}\sum_{K\in\Tau}\dlt m_K \right)\times\left(\sum_{n=0}^{N-1}\sum_{K\in\Tau}\dlt m_K \mathbb{E}\Big[g^2(u_K^n)\big(W^{n+1}-W^n\big)^2\Big] \right) \\
&\leq&||\xi'||^2_\infty||\varphi||^2_\infty T|\Lambda|\sum_{n=0}^{N-1}\sum_{K\in\Tau}\dlt m_K \mathbb{E}\big[g^2(u_K^n)\big]\times \mathbb{E}\Big[\big(W^{n+1}-W^n\big)^2\Big]  \\
&\leq&\Delta t ||\xi'||^2_\infty  ||\varphi||_\infty^2 |\Lambda| TC_{L_g}\big(1+ ||\uhnl||^2_{L^2(\Omega;L^2(0,T;\lzlambda))}\big) ,
\end{eqnarray*}
which tends to $0$ as $m$ goes to $+\infty$. \\
Secondly, we start by rewriting $\tilde{S}_{2,m}-\hat{S}_{2,m}$ in the following manner
\begin{align}\label{decomps2m}
\begin{split}
-(\tilde{S}_{2,m}-\hat{S}_{2,m})&=\mathbb{E}\left[\mathds{1}_A\sum_{n=0}^{N-1}\sum_{K\in\Tau}m_K\int _{\tn}^{\tnp}\xi(\tn)g(u_K^n)\,dW(t)\varphi(x_K)\right]\\
&\quad-\mathbb{E}\left[\mathds{1}_A\sum_{n=0}^{N-1}\sum_{K\in\Tau}m_K\int _{\tn}^{\tnp}\xi(t)g(\unk)\,dW(t)\varphi(x_K)\right]\\
&\quad+\mathbb{E}\left[\mathds{1}_A\sum_{n=0}^{N-1}\sum_{K\in\Tau}m_K\int _{\tn}^{\tnp}\xi(t)g(\unk)\,dW(t)\varphi(x_K)\right]\\
&\quad-\mathbb{E}\left[\mathds{1}_A\int_\Lambda\int _{0}^{T}\xi(t)g(\uhnl(t,x))\,dW(t)\varphi(x)\,dx\right].
\end{split}
\end{align}
Our aim is to analyze the difference between the two first terms (the two last terms, respectively) on the right hand side of \eqref{decomps2m}. To do so, we involve the discrete function $\varphi_h$ introduced at the beginning of the proof. Using successively Cauchy-Schwarz inequality on $\Omega\times \Lambda$ and It\^o isometry, one gets since $\mathds{P}(A)\leq 1$
\begin{eqnarray*}
&&\left|\mathbb{E}\left[\mathds{1}_A\sum_{n=0}^{N-1}\sum_{K\in\Tau}m_K\int _{\tn}^{\tnp}\big(\xi(\tn)-\xi(t)\big)g(u_K^n)\,dW(t)\varphi(x_K)\right]\right|\\
&\leq&\sum_{n=0}^{N-1}\left|\mathbb{E}\left[\mathds{1}_A\int_\Lambda\int _{\tn}^{\tnp}\big(\xi(\tn)-\xi(t)\big)g(\uhnl(t,x))\varphi_h(x)\,dW(t)\,dx\right]\right|\\
&\leq&\sqrt{|\Lambda|}\sum_{n=0}^{N-1}\left(\mathbb{E}\left[\int_\Lambda\left\{\int _{\tn}^{\tnp}\big(\xi(\tn)-\xi(t)\big)g(\uhnl(t,x))\varphi_h(x)\,dW(t)\right\}^2\,dx\right]\right)^{\frac{1}{2}}\\
&=&\sqrt{|\Lambda|}\sum_{n=0}^{N-1}\left(\mathbb{E}\left[\int_\Lambda\int _{\tn}^{\tnp}\Big\{\big(\xi(\tn)-\xi(t)\big)g(\uhnl(t,x))\varphi_h(x)\Big\}^2\,dt\,dx\right]\right)^{\frac{1}{2}}\\
&\leq& \sqrt{\Delta t}\sqrt{|\Lambda|}||\xi'||_\infty||\varphi||_\infty L_g\sum_{n=0}^{N-1}\Delta t\left( \sup_{t\in [0,T]}\mathbb{E}\left[\int_\Lambda |\uhnl(t,x)|^2\,dx\right]\right)^{\frac{1}{2}}
\end{eqnarray*}
which tends to $0$ thanks to the bound given by Proposition \ref{bounds}. Let us now study the difference between the two last terms of the right hand side of \eqref{decomps2m}. Using again Cauchy-Schwarz inequality on $\Omega\times \Lambda$ and It\^o isometry, one arrives at
\begin{eqnarray*}
&&\left|\mathbb{E}\left[\mathds{1}_A\int_\Lambda\int _{0}^{T}\xi(t)g(\uhnl(t,x))\big(\varphi_h(x)-\varphi(x)\big) \,dW(t)\,dx\right]\right|^2\\
&\leq& |\Lambda|\times \mathbb{E}\left[\int_\Lambda\left\{\int _{0}^{T}\xi(t)g(\uhnl(t,x))\big(\varphi_h(x)-\varphi(x)\big)\,dW(t)\right\}^2\,dx\right]\\
&=& |\Lambda|\times \mathbb{E}\left[\int_\Lambda\int _{0}^{T}\left\{\xi(t)g(\uhnl(t,x))\big(\varphi_h(x)-\varphi(x)\big)\right\}^2\,dt\,dx\right]\\
&\leq& h^2 ||\xi||^2_\infty ||\nabla \varphi||^2_\infty |\Lambda| \mathbb{E}\left[\int_\Lambda\int _{0}^{T}C_{L_g}(1+|\uhnl(t,x)|^2)\,dt\,dx\right],\\
\end{eqnarray*}
which tends to $0$ as $m$ goes to $+\infty$ thanks to the control on $ \mathbb{E}\left[\int_\Lambda\int _{0}^{T}|\uhnl(t,x)|^2\,dt\,dx\right]$ given by Lemma \ref{210611_lem01}. Finally, one can affirm that $\tilde{S}_{2,m}-\hat{S}_{2,m}\xrightarrow[m\rightarrow +\infty]{} 0$.\\
Thirdly, we show that the following convergence result holds: 
$$\hat{S}_{2,m}\xrightarrow[m\rightarrow +\infty]{}-\mathbb{E}\left[\mathds{1}_A\int_{\Lambda}\int_0^T\xi(s)g_u(s,x)\,dW(s)\varphi(x)\,dx\right].$$
To do so, recall that, thanks to It\^{o} isometry, the following linear application is continuous:
\begin{eqnarray*}
I_T:L^2_{\mathcal{P}_T}\big(\Omega\times(0,T); L^2(\Lambda)\big)&\rightarrow& L^2(\Omega; L^2(\Lambda))\\
X&\mapsto& \int_0^T X(\omega,t,x)\,dW(t).
\end{eqnarray*}
Set $(X_m)_m=(g(\uhnl)\xi)_m$, then thanks to Lemma \ref{CVguhnl}, up to a subsequence denoted in the same way, $(X_m)_m$ converges weakly towards $g_u\xi$ in $L^2_{\mathcal{P}_T}\big(\Omega\times(0,T); L^2(\Lambda)\big)$ and so since $\varphi\mathds{1}_A \in L^2(\Omega; L^2(\Lambda))$ one gets
\begin{eqnarray*}
\hat{S}_{2,m}&=&-\int_{\Omega}\int_{\Lambda}I_T(X_m)(\omega,x)\varphi(x)\mathds{1}_A(\omega)\,dx\,d\mathds{P}(\omega)\\
&\xrightarrow[m\rightarrow +\infty]{}&-\int_{\Omega}\int_{\Lambda}I_T(g_u\xi)(\omega,x)\varphi(x)\mathds{1}_A(\omega)\,dx\,d\mathds{P}(\omega)\\
&=&-\mathbb{E}\left[\mathds{1}_A\int_{\Lambda}\int_0^T\xi(s)g_u(s,x)\,dW(s)\varphi(x)\,dx\right]
\end{eqnarray*}
Finally, thanks to It\^{o} formula (see \cite{DPZ14} Theorem 4.17 p.105) we may apply a rule of stochastic integration by parts to conclude
$$-\mathbb{E}\left[\mathds{1}_A\hspace*{-0.1cm}\int_{\Lambda}\int_0^T\hspace*{-0.1cm}\xi(s)g_u(s,x)\,dW(s)\varphi(x)\,dx\right]\hspace*{-0.1cm}=\hspace*{-0.1cm}\mathbb{E}\left[\mathds{1}_A\hspace*{-0.1cm}\int_{\Lambda}\int_0^T\int_0^t\hspace*{-0.1cm}g_u(s,x)\,dW(s)\xi'(t)\varphi(x)\,dt\,dx\right].$$
$\bullet$ Study of $S_{3,m}$: One shows that 
\begin{align*}
S_{3,m}\xrightarrow[m\rightarrow +\infty]{} -\mathbb{E}\left[\mathds{1}_A\int_0^T\int_\Lambda\xi(t)\Delta\varphi(x)u(t,x)\,dx\,dt\right].
\end{align*}
Indeed, thanks to a discrete integration by part formula, $S_{3,m}$ can be written as
\[
 S_{3,m}
 =\mathbb{E}\left[\sum_{n=0}^{N-1}\int_{t_n}^{t_{n+1}}\mathds{1}_A\xi(t)
 \sum_{K\in\Tau}u_K^{n+1}
 \sum_{\sigma=K|L\in\edgesint\cap\edges_K}m_\sigma\left(\frac{\varphi(x_K)-\varphi(x_L)}{\dkl}\right)\,dt\right].
\]
Then, since $\nabla\varphi\cdot\mathbf{n}=0$ on $\partial\Lambda$, thanks to the Green-Ostrogradski Theorem one has,
\[
 \int_K\Delta\varphi(x)\,dx
 = \int_{\partial K}\nabla\varphi(x)\cdot\mathbf{n}(x)\,d\gamma(x)
 = \sum_{\sigma\in\edgesint\cap\edges_K} \int_\sigma \nabla\varphi(x)\cdot\mathbf{n}_{K,\sigma}\,d\gamma(x).
\]
Thus, we have
\begin{align*}
 S_{3,m}&= \mathbb{E}\left[\sum_{n=0}^{N-1}\int_{t_n}^{t_{n+1}}\mathds{1}_A\xi(t)
 \sum_{K\in\Tau} u_K^{n+1}\sum_{\sigma=K|L\in\edgesint\cap\edges_K}m_\sigma\left(\frac{\varphi(x_K)-\varphi(x_L)}{\dkl}\right)\,dt\right]\\
 &-\mathbb{E}\left[\sum_{n=0}^{N-1}\int_{t_n}^{t_{n+1}}\mathds{1}_A\xi(t)
 \sum_{K\in\Tau} u_K^{n+1}
 \left(\int_K\Delta\varphi(x)\,dx - \hspace*{-0.4cm}\sum_{\sigma\in\edgesint\cap\edges_K} \int_\sigma \nabla\varphi(x)\cdot\mathbf{n}_{K,\sigma}\,d\gamma(x) \right)\,dt\right] \\
 &= -\mathbb{E}\left[\int_0^T\mathds{1}_A\xi(t)\int_\Lambda \uhnr(t,x)\Delta\varphi(x)\,dx\,dt\right]\\
&+ \mathbb{E}\left[\sum_{n=0}^{N-1}\int_{t_n}^{t_{n+1}}\mathds{1}_A\xi(t)
\sum_{\sigma=K|L\in\edgesint} m_\sigma (u_K^{n+1}-u_L^{n+1}) R_\sigma^\varphi
 \,dt\right],
\end{align*}
with
\[
 R_\sigma^\varphi
 = \frac1{m_\sigma} \int_\sigma \nabla\varphi(x)\cdot\mathbf{n}_{K,\sigma}\,d\gamma(x)
 - \frac{\varphi(x_L)-\varphi(x_K)}{\dkl}.
\]
Using Proposition \ref{addreg u} and up to a subsequence denoted in the same way, one gets
\begin{align*}
-\mathbb{E}\left[\int_0^T\int_\Lambda\mathds{1}_A\xi(t)u_{h,N}^r(t,x)\Delta\varphi(x)\,dx\,dt\right]
\xrightarrow[m\rightarrow +\infty]{}-\mathbb{E}\left[\int_0^T\int_\Lambda\mathds{1}_A\xi(t)u(t,x)\Delta\varphi(x)\,dx\,dt\right].
\end{align*}
Note that one is able to control the deterministic rest $R_\sigma^\varphi$ since for any $\sigma=K|L\in\edgesint$, the orthogonality condition implies $x_L-x_K=\dkl\mathbf{n}_{K,\sigma}$, thus thanks to the Taylor formula there exists a constant $C_{\varphi}>0$ only depending on $\varphi$ such that for any $\sigma\in \edgesint$ one has
\[
 |R_\sigma^\varphi| \leq C_\varphi h.
\]
Therefore, using the fact that $\di\sum_{\sigma=K|L\in\edgesint}\frac{m_\sigma\dkl}{d}\leq |\Lambda|$, one finally obtains thanks to the Cauchy-Schwarz inequality and Lemma \ref{remarkuhnrboundintomega} \begin{align*}
 &\left|\mathbb{E}\left[\sum_{n=0}^{N-1}\int_{t_n}^{t_{n+1}}\mathds{1}_A\xi(t)
\sum_{\sigma=K|L\in\edgesint} m_\sigma (u_K^{n+1}-u_L^{n+1}) R_\sigma^\varphi
 \,dt\right]\right|\\
 &\leq C_\varphi h\mathbb{E}\left[\sum_{n=0}^{N-1}\int_{t_n}^{t_{n+1}}\mathds{1}_A |\xi(t)|
 \left(\sum_{\sigma=K|L\in\edgesint}m_\sigma\dkl\right)^\frac12 \left(\sum_{\sigma=K|L\in\edgesint}m_{\sigma}\frac{|u_K^{n+1}-u_L^{n+1}|^2}{\dkl}\right)^\frac12\,dt\right]\\
 &\leq \sqrt d C_\varphi |\Lambda|^\frac12 h\mathbb{E}\left[\int_0^T\mathds{1}_A |\xi(t)||\uhnr(t)|_{1,h}\,dt\right]\\
 &\leq h\sqrt d C_\varphi |\Lambda|^\frac12 \|\xi\mathds{1}_A\|_{L^2(\Omega\times(0,T))}\left(\mathbb{E}\left[\int_0^T|\uhnr(t)|^2_{1,h}\,dt\right]\right)^{\frac{1}{2}}
\xrightarrow[m\rightarrow +\infty]{} 0.
\end{align*}
$\bullet$ Study of $S_{4,m}$: Adapting to the stochastic case arguments exposed in \cite{gal} p.774-776 in the deterministic and elliptic case, one shows that 
$$S_{4,m}\xrightarrow[m\rightarrow +\infty]{} -\mathbb{E}\left[\mathds{1}_A\int_0^T\int_{\Lambda}\big(\ve(t,x)u(t,x)\big)\cdot\nabla \varphi(x)\xi(t)\,dx\,dt\right].$$
To do so, we decompose the term
$$S_{4,m}=\mathbb{E}\left[\sum_{n=0}^{N-1}\int_{t_n}^{t_{n+1}}\mathds{1}_A\xi(t)\sum_{K\in\Tau}\sum_{\sigma\in\edgesint\cap\edges_K} m_\sigma \vksnp\big(u^{n+1}_{\sigma}-u^{n+1}_K\big)\varphi(x_K)\,dt\right]$$
in the following manner: $S_{4,m}=S_{4,m}-\tilde{S}_{4,m}+\tilde{S}_{4,m}-\hat{S}_{4,m}+\hat{S}_{4,m}$
where 
\begin{align*}
\tilde{S}_{4,m}&=\mathbb{E}\left[\sum_{n=0}^{N-1}\int_{t_n}^{t_{n+1}}\hspace*{-0.25cm}\mathds{1}_A\xi(t)\hspace*{-0.1cm}\sum_{K\in\Tau}\hspace*{-0.1cm}\sum_{\sigma\in\edgesint\cap\edges_K} \hspace*{-0.4cm}\big(u^{n+1}_{\sigma}-u^{n+1}_K\big)
\frac{1}{\dlt}\int_{\tn}^{\tnp}\hspace*{-0.2cm} \int_\sigma\ve(s,x)\cdot\nks \varphi(x)\,d\gamma(x)dsdt\right]\\
\hat{S}_{4,m}&=-\mathbb{E}\left[\mathds{1}_A\int_0^T \int_{\Lambda}\xi(t) \uhnr(t,x)\diver\big(\ve(t,x) \varphi(x)\big)\,dx\,dt\right].
\end{align*}
Firstly, one shows that $|S_{4,m}-\tilde{S}_{4,m}|$ tends to $0$ as $m$ goes to $+\infty$. Indeed, by noticing that 
\begin{eqnarray*}
\left|\frac{1}{\ms\dlt}\int_{\tn}^{\tnp} \int_\sigma\ve(s,x)\cdot \nks \big(\varphi(x_K)-\varphi(x)\big)\,d\gamma(x)ds\right|&\leq &h ||\ve||_\infty ||\nabla \varphi||_\infty 
\end{eqnarray*}
one obtains 
\begin{eqnarray*}
|S_{4,m}-\tilde{S}_{4,m}|&=&\bigg|\mathbb{E}\Big[\sum_{n=0}^{N-1}\int_{t_n}^{t_{n+1}}\mathds{1}_A\xi(t)\sum_{K\in\Tau}\sum_{\sigma\in\edgesint\cap\edges_K} \ms\big(u^{n+1}_{\sigma}-u^{n+1}_K\big)\\
&&\hspace*{1cm}\times\frac{1}{\ms\dlt}\int_{\tn}^{\tnp}\int_\sigma\ve(s,x)\cdot \nks \big(\varphi(x_K)-\varphi(x)\big)\,d\gamma(x)\,ds\,dt\Big]\bigg|\\
&\leq&||\ve||_\infty ||\nabla \varphi||_\infty ||\xi||_\infty h \sum_{n=0}^{N-1}\sum_{K\in\Tau}\sum_{\sigma\in\edgesint\cap\edges_K} \dlt \ms \mathbb{E}\left[|\unps-\unpk|\right]\\
&\leq& ||\ve||_\infty ||\nabla \varphi||_\infty ||\xi||_\infty h \left(\sum_{n=0}^{N-1}\sum_{\sigma=K|L\in\edgesint} \dlt \ms \dkl\right)^{\frac{1}{2}}\\
&&\times \left(\sum_{n=0}^{N-1}\sum_{\sigma=K|L\in\edgesint} \dlt \frac{\ms}{\dkl} \mathbb{E}\Big[|\unpk-\unpl|^2\Big]\right)^{\frac{1}{2}} \\
&\leq&h  ||\ve||_\infty ||\nabla \varphi||_\infty ||\xi||_\infty \sqrt{d T|\Lambda|} \left(\int_0^T\erwb|\uhnr(t)|_{1,h}^2\erwe dt\right)^{\frac{1}{2}}\xrightarrow[m\rightarrow +\infty]{} 0.
\end{eqnarray*}
Secondly, one shows that $|\tilde{S}_{4,m}-\hat{S}_{4,m}|$ tends to $0$ as $m$ goes to $+\infty$. Indeed, for any $\sigma=K|L\in\edgesint $, since $\mathbf{n}_{K,\sigma}=-\mathbf{n}_{L,\sigma}$ then $\vksnp=-v_{L,\sigma}^{n+1}$ and so one remarks that $\tilde{S}_{4,m}$ can be rewritten as 
\begin{eqnarray*}
\tilde{S}_{4,m}=-\mathbb{E}\left[\sum_{n=0}^{N-1}\int_{t_n}^{t_{n+1}}\hspace*{-0.2cm}\mathds{1}_A\xi(t)\sum_{K\in\Tau}\sum_{\sigma\in\edgesint\cap\edges_K} \hspace*{-0.4cm}u^{n+1}_K
\frac{1}{\dlt}\int_{\tn}^{\tnp}\hspace*{-0.2cm} \int_\sigma\ve(s,x)\cdot \nks \varphi(x)\,d\gamma(x)\,ds\,dt\right].
\end{eqnarray*}
In this way, using the divergence-free property of $\ve$, one gets
\begin{eqnarray*}
\tilde{S}_{4,m}&=&-\mathbb{E}\left[\sum_{n=0}^{N-1}\int_{t_n}^{t_{n+1}}\hspace*{-0.2cm}\mathds{1}_A\xi(t)\sum_{K\in\Tau} u^{n+1}_K
\frac{1}{\dlt}\int_{\tn}^{\tnp} \int_{\partial K}\ve(s,x)\cdot \mathbf{ n}_K(x) \varphi(x)\,d\gamma(x)\,ds\,dt\right]\\
&=& -\mathbb{E}\left[\sum_{n=0}^{N-1}\int_{t_n}^{t_{n+1}}\hspace*{-0.2cm}\mathds{1}_A\xi(t)\sum_{K\in\Tau} u^{n+1}_K
\frac{1}{\dlt}\int_{\tn}^{\tnp} \int_{K}\diver\big(\ve(s,x) \varphi(x)\big)\,dx\,ds\,dt\right]\\
&=& -\mathbb{E}\left[\sum_{n=0}^{N-1}\int_{t_n}^{t_{n+1}}\hspace*{-0.2cm}\mathds{1}_A\xi(t)\sum_{K\in\Tau} u^{n+1}_K
\frac{1}{\dlt}\int_{\tn}^{\tnp} \int_{K}\ve(s,x)\cdot \nabla \varphi(x)\,dx\,ds\,dt\right].
\end{eqnarray*} 
In the same manner, we write $\hat{S}_{4,m}$ as
$$\hat{S}_{4,m}=-\mathbb{E}\left[\mathds{1}_A\int_0^T \int_{\Lambda}\xi(t) \uhnr(t,x)\ve(t,x)\cdot \nabla \varphi(x)\,dx\,dt\right]$$
and then 
\begin{eqnarray*}
&&\left|\tilde{S}_{4,m}-\hat{S}_{4,m}\right|\\
&=&
\left|\mathbb{E}\left[\mathds{1}_A\sum_{n=0}^{N-1}\sum_{K\in\Tau} u^{n+1}_K\hspace*{-0.1cm}\int_{t_n}^{t_{n+1}}\hspace*{-0.2cm}\xi(t)\hspace*{-0.1cm}\int_K \left\{\ve(t,x)\hspace*{-0.1cm}\cdot\hspace*{-0.1cm} \nabla \varphi(x)-
\frac{1}{\dlt}\int_{\tn}^{\tnp} \ve(s,x)\cdot\hspace*{-0.1cm}\nabla\hspace*{-0.01cm} \varphi(x)\,ds\hspace*{-0.1cm}\right\}\hspace*{-0.1cm}dxdt\right]\right|\\
&=&\left|\mathbb{E}\left[\mathds{1}_A\sum_{n=0}^{N-1}\sum_{K\in\Tau} u^{n+1}_K\int_{t_n}^{t_{n+1}}\hspace*{-0.2cm}\xi(t)\int_K 
\frac{1}{\dlt}\int_{\tn}^{\tnp} \Big\{\ve(t,x)-\ve(s,x)\Big\}\cdot\nabla \varphi(x)\,ds\,dx\,dt\right]\right|\\
&\leq&\dlt ||\xi||_\infty ||\partial_t \ve||_\infty ||\nabla \varphi ||_\infty  ||\uhnr||_{L^2(\Omega;L^2(0,T;L^2(\Lambda)))}\sqrt{T|\Lambda|} \xrightarrow[m\rightarrow +\infty]{} 0.
\end{eqnarray*} 
Thirdly, owing to the weak convergence of $(\uhnr)_m$ towards $u$ in $L^2(\Omega;L^2(0,T;L^2(\Lambda)))$ given by Proposition \ref{addreg u}, one gets directly that
\begin{eqnarray*}
\hat{S}_{4,m}&=&-\mathbb{E}\left[\mathds{1}_A\int_0^T \int_{\Lambda} \uhnr(t,x)\diver\big(\ve(t,x) \varphi(x)\big)\xi(t)\,dx\,dt\right]\\
&\xrightarrow[m\rightarrow +\infty]{}&-\mathbb{E}\left[\mathds{1}_A\int_0^T \int_{\Lambda} u(t,x)\diver\big(\ve(t,x) \varphi(x)\big)\xi(t)\,dx\,dt\right]\\
&=&-\mathbb{E}\left[\mathds{1}_A\int_0^T \int_{\Lambda}\big(\ve(t,x) u(t,x)\big) \cdot \nabla\varphi(x)\xi(t)\,dx\,dt\right]
\end{eqnarray*} 
$\bullet$ Study of $S_{5,m}$: Thanks to Lemma \ref{CVbetauhnl}, one shows that 
$$S_{5,m}\xrightarrow[m\rightarrow +\infty]{} \mathbb{E}\left[\mathds{1}_A\int_0^T \int_{\Lambda} \beta_u(t,x) \varphi(x)\xi(t)\,dx\,dt\right].$$
Indeed, let us remark that $S_{5,m}$ can be decomposed as $S_{5,m}=S_{5,m}-\tilde{S}_{5,m}+\tilde{S}_{5,m}$ where
$$\tilde{S}_{5,m}=\mathbb{E}\left[\mathds{1}_A\int_0^T\int_{\Lambda}\xi(t) \beta(\uhnr)\varphi(x)\,dx\,dt\right]\xrightarrow[m\rightarrow +\infty]{}\mathbb{E}\left[\mathds{1}_A\int_0^T \int_{\Lambda} \beta_u(t,x) \varphi(x)\xi(t)\,dx\,dt\right]$$
 and
\begin{eqnarray*}
|S_{5,m}-\tilde{S}_{5,m}|&=&\left|\mathbb{E}\left[\mathds{1}_A\sum_{n=0}^{N-1}\sum_{K\in\Tau}\int_{t_n}^{t_{n+1}}\int_{K}\xi(t) \beta(\unpk)\big(\varphi(x_K)-\varphi(x)\big)\,dx\,dt\right]\right|\\
&\leq& h||\xi||_{\infty} ||\nabla \varphi||_{\infty}  L_{\beta} ||\uhnr ||_{L^1\left(\Omega;L^1(0,T;L^1(\Lambda	))\right)}\xrightarrow[m\rightarrow +\infty]{}  0.
\end{eqnarray*}
\noindent Gathering all the previous convergence results, we pass to the limit in \eqref{discretequforlimit} to get that $\mathds{P}$-a.s. in $\Omega$, for all $\xi\in \big\{\phi\in\mathscr{D}(\re):  \phi(T)=0\big\}$ and all $\varphi\in \big\{\psi\in\mathscr{D}(\re^d) : \nabla\psi\cdot\mathbf{n}=0\text{ on }\partial\Lambda\big\},$
\begin{align}\label{210830_08}
\begin{split}
&-\int_0^T\int_\Lambda\left(u(t,x)-\int_0^{t}g_u(s,x)\,dW(s)\right)\xi'(t)\varphi(x)\,dx\,dt-\int_\Lambda u_0(x)\xi(0)\varphi(x)\,dx\\
&-\int_0^T\int_{\Lambda}\big(\ve(t,x)u(t,x)\big)\cdot\nabla \varphi(x)\xi(t)\,dx\,dt\\
=&\int_0^T\int_\Lambda  u(t,x)\Delta\varphi(x)\xi(t)\,dx\,dt+\int_0^T\int_\Lambda \beta_u(t,x)\varphi(x)\xi(t)\,dx\,dt
\end{split}
\end{align}
which can be rewritten as 
\begin{align}\label{210830_08bis}
\begin{split}
&-\int_0^T\int_\Lambda\left(u(t,x)-\int_0^{t}g_u(s,x)\,dW(s)\right)\xi'(t)\varphi(x)\,dx\,dt-\int_\Lambda u_0(x)\xi(0)\varphi(x)\,dx\\
&+\int_0^T\int_{\Lambda}\diver\big(\ve(t,x)u(t,x)\big) \varphi(x)\xi(t)\,dx\,dt\\
=&-\int_0^T \int_\Lambda \nabla u(t,x) \cdot \nabla \varphi(x)\xi(t)\,dx\,dt+\int_0^T\int_\Lambda \beta_u(t,x)\varphi(x)\xi(t)\,dx\,dt,
\end{split}
\end{align}
since $\ve(t,x)\cdot\mathbf{n}(x)=0$ and $\nabla \varphi(x)\cdot\mathbf{n}(x)=0$ for any $(t,x)\in [0,T]\times \partial \Lambda$.
\noindent By \cite[Theorem 1.1]{Droniou} the set $\{\psi\in\mathscr{D}(\mathbb{R}^d)\ | \ \nabla\psi\cdot\mathbf{n}=0 \ \text{on} \ \partial\Lambda\}$ is dense in $H^1(\Lambda)$ and therefore \eqref{210830_08bis} applies to all $\varphi\in H^1(\Lambda)$. 
In the following, we denote the dual space of $H^1(\Lambda)$ by $H^1(\Lambda)^{\ast}$. Recall that we have the following continuous and dense embeddings
\[H^1(\Lambda)\hookrightarrow L^2(\Lambda)\hookrightarrow H^1(\Lambda)^{\ast}.\]
Let us denote the $H^1(\Lambda)$-$H^1(\Lambda)^{\ast}$ duality bracket by $\langle\cdot,\cdot\rangle$ and the $L^2(\Lambda)$ scalar product by $(\cdot,\cdot)$. From Proposition \ref{addreg u}, we know that the weak limit $u$ belongs to $L^2(\Omega;L^2(0,T;H^1(\Lambda)))$ and thus it follows that 
\[\Delta u\in L^2\big(\Omega;L^2(0,T;H^1(\Lambda)^{\ast})\big)\]
which yields
\begin{align}\label{220125_01}
\begin{split}
-\int_0^T\int_{\Lambda}\nabla u(t,x)\cdot\nabla\varphi(x)\xi(t)\,dx\,dt=\int_0^T\langle \Delta u(t,\cdot),\varphi\rangle\xi(t)\,dt
\end{split}
\end{align}
$\mathds{P}$-a.s. in $\Omega$, for all $\xi\in \big\{\phi\in\mathscr{D}(\re):  \phi(T)=0\big\}$ and all 
$\varphi\in H^1(\Lambda)$. Combining \eqref{210830_08bis} with \eqref{220125_01} and with the identity
\begin{align}\label{eqliminitialdata}
-\int_{\Lambda}u_0(x)\varphi(x)\xi(0)\,dx=\int_0^T\int_{\Lambda}u_0(x)\varphi(x)\xi'(t)\,dx\,dt
\end{align}
(see, \cite[Lemma 7.3]{Roubicek}), from Fubini's theorem it follows that
\begin{align*}
&\left\langle-\int_0^T\left(u(t)-\int_0^t g_u(s)\,dW(s)-u_0\right)\xi'(t)\,dt,\varphi\right\rangle\\
&=\left\langle\int_0^T\Big(\Delta u(t)-\diver\big(\ve(t,\cdot)u(t)\big)+\beta_u(t)\Big)\xi(t)\,dt,\varphi\right\rangle,
\end{align*}
$\mathds{P}$-a.s. in $\Omega$, for all $\xi\in \big\{\phi\in\mathscr{D}(\re):  \phi(T)=0\big\}$ and all $\varphi\in H^1(\Lambda)$. Therefore
$$
-\hspace*{-0.1cm}\int_0^T\hspace*{-0.1cm}\left(u(t)-\int_0^t g_u(s)\,dW(s)-u_0\right)\xi'(t)\,dt=\hspace*{-0.1cm}\int_0^T\hspace*{-0.1cm}\Big(\Delta u(t)-\diver\big(\ve(t,\cdot)u(t)\big)+\beta_u(t)\Big)\xi(t)\,dt
$$
in $H^1(\Lambda)^{\ast}$, for all $\xi\in \big\{\phi\in\mathscr{D}(\re):  \phi(T)=0\big\}$, $\mathds{P}$-a.s. in $\Omega$ since, by a separability argument, the exceptional set in $\Omega$ may be chosen independently of $\varphi$. Consequently, (see, e.g. \cite[Proposition A6]{Brezis})
\[\left(u-\int_0^\cdot g_u\,dW-u_0\right)\in W^{1,2}(0,T;H^1(\Lambda)^{\ast})\quad \text{$\mathds{P}$-a.s. in $\Omega$}\]
and so
\begin{align}\label{220126_02}
\hspace*{-0.29cm}\frac{d}{dt}\left(u(t)-\int_0^t g_u(s)\,dW(s)-u_0\right)=\Delta u-\diver(\ve u)+\beta_u \text{ in }  L^2(\Omega;L^2(0,T;H^1(\Lambda)^{\ast}) .
\end{align}
Since $g_u\in L^2_{\mathcal{P}_T}\big(\Omega\times(0,T);H^1(\Lambda)\big)$, it follows thanks to the properties of the stochastic integral that (see \cite{DPZ14} Proposition 4.15 p.104 or \cite{PrevotRockner} Lemma 2.4.1 p.35)
\[\nabla\left(\int_0^\cdot g_u\,dW\right)=\int_0^\cdot\nabla g_u\,dW,\]
hence $\displaystyle u-\int_0^{\cdot}g_u\,dW$ is an element of $L^2(\Omega;L^2(0,T;H^1(\Lambda)))$.
Note that the remaining of the proof is very similar to the one exposed in our previous paper \cite{BNSZ22}, but for the sake of completeness, we decide to detail it again.
From \cite[Lemma 7.3]{Roubicek} we obtain at first that $u\in L^2\big(\Omega;\mathscr{C}([0,T];L^2(\Lambda))\big)$ and together with \eqref{220126_02}, the following rule of partial integration for all $0\leq t\leq T$, $\mathds{P}$-a.s. in $\Omega$: 
\begin{align}\label{220126_01}
\begin{split}
&\left(u(t)-\int_0^t g_u(\tau)\,dW(\tau)-u_0,\zeta(t)\right)-(u(0)-u_0,\zeta(0))\\
=&\int_0^t\left\langle\Delta u(s)-\diver\big(\ve(s,\cdot)u(s)\big) +\beta_u(s),\zeta(s)\right\rangle\,ds\\
&+\int_0^t\left\langle \partial_t\zeta(s),u(s)-\int_0^s g_u(\tau)\,dW(\tau)-u_0\right\rangle\,ds
\end{split}
\end{align}
for all $\zeta\in L^2(0,T;H^1(\Lambda))$ with $\partial_t\zeta\in L^2(0,T;H^1(\Lambda)^{\ast}))$. Choosing $\zeta:(t,x)\mapsto\xi(t)\varphi(x)$ with $\varphi\in H^1(\Lambda)$, $\xi\in \big\{\phi\in\mathscr{D}(\re):  \phi(T)=0\big\}$ in \eqref{220126_01}, we get
\begin{align}\label{220126_03}
\begin{split}
&\left(u(t)-\int_0^t g_u(\tau)\,dW(\tau)-u_0,\varphi\right)\xi(t)-(u(0)-u_0,\varphi)\xi(0)\\
=&\int_0^t\xi(s)\left\langle\Delta u(s)-\diver\big(\ve(s,\cdot)u(s)\big)+\beta_u(s),\varphi\right\rangle\,ds\\
&+\int_0^t \xi'(s)\left(u(s)-\int_0^s g_u(\tau)\,dW(\tau)-u_0,\varphi\right)\,ds
\end{split}
\end{align}
$\mathds{P}$-a.s. in $\Omega$. The particular choice of $t=T$ and $\xi\in\mathscr{D}(\mathbb{R})$ with $\xi(T)=0$ and $\xi(0)=1$ in \eqref{220126_03} combined with \eqref{210830_08}, \eqref{220125_01} and \eqref{eqliminitialdata} yields
\[(u(0)-u_0,\varphi)=0 \quad \text{for all $\varphi\in H^1(\Lambda)$, $\mathds{P}$-a.s. in $\Omega$}\]
and therefore $u(0)=u_0$ $\mathds{P}$-a.s. in $\Omega$.\\
Now, we fix $t\in [0,T)$ and choose $\xi\in\mathscr{D}(\mathbb{R})$ with $\xi(T)=0$ and $\xi(s)=1$ for all $s\in [0,t]$. With this choice, from \eqref{220126_03} we obtain
\begin{align}\label{220127_01}
\hspace*{-0.3cm}\left(u(t)-\int_0^t g_u(s)\,dW(s)-u(0),\varphi\right)\hspace*{-0.1cm}=\hspace*{-0.1cm}\int_0^t\left\langle\Delta u(s)-\diver\big(\ve(s,\cdot)u(s)\big)+\beta_u(s),\varphi\right\rangle\,ds
\end{align}
$\mathds{P}$-a.s. in $\Omega$ for all $\varphi\in H^1(\Lambda)$. Since, for fixed $\varphi\in H^1(\Lambda)$, 
$$t\mapsto \left(u(t)-\int_0^t g_u(s)\,dW(s)-u(0),\varphi\right)$$ 
$$\text{and }  t\mapsto \int_0^t\left\langle\Delta u(s)-\diver\big(\ve(s,\cdot)u(s)\big)+\beta_u(s),\varphi\right\rangle\,ds$$
are continuous on $[0,T]$, $\mathds{P}$-a.s. in $\Omega$, the exceptional set in $\Omega$ in \eqref{220127_01} may be chosen independently of $t\in [0,T)$ and \eqref{220127_01} also holds for $t=T$. This yields
\begin{align*}
u(t)-u(0)-\int_0^t g_u(s)\,dW(s)+\int_0^t \diver\big(\ve(s,\cdot)u(s)\big)ds- \int_0^t\beta_u(s)\,ds=\int_0^t\Delta u(s)\,ds,
\end{align*}
\text{in $H^1(\Lambda)^{\ast}$ and $\mathds{P}$-a.s. in $\Omega$}. 
To conclude, let us mention that since the left-hand side of the above equality is in $L^2(\Lambda)$, it also holds in $L^2(\Lambda)$.
\end{proof}

\begin{lem}{(Stochastic energy equality)
}\label{lemenergy}
For any $c>0$, the stochastic process $u$ introduced in Proposition \ref{addreg u} satisfies the following stochastic energy equality: 
\begin{align}\label{energy}
\begin{split}
&e^{-ct} \E\left[||u(t) ||^2_{L^2(\Lambda)}\right]+2\int_0^te^{-cs} \E\left[||\nabla u(s) ||^2_{L^2(\Lambda)}\right]\,ds\\
=\,& \E\left[|| u_0||^2_{L^2(\Lambda)}\right]-c\int_0^t e^{-cs} \E\left[|| u(s)||^2_{L^2(\Lambda)}\right]\,ds+\int_0^t e^{-cs} \E\left[||g_u(s)||^2_{L^2(\Lambda)}\right]\,ds\\
&+2\int_{0}^te^{-cs} \E\left[ \int_{\Lambda}\beta_u(s,x)u(s,x)\,dx \right]\,ds, \quad \forall t\in [0,T].
\end{split}
\end{align}
\end{lem}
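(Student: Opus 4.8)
The plan is to apply the variational (Gelfand-triple) Itô formula for the square of the $L^2(\Lambda)$-norm to the process $u$ of Proposition~\ref{addreg u}, to identify the drift pairing, and then to introduce the exponential weight by a deterministic product rule before taking expectations.

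First I would record what is already available. From Proposition~\ref{PTTL} and Lemmas~\ref{CVguhnl} and \ref{CVbetauhnl}, the process $u\in L^2(\Omega;\mathscr{C}([0,T];L^2(\Lambda)))\cap L^2(\Omega;L^2(0,T;H^1(\Lambda)))$ is predictable with values in $L^2(\Lambda)$, $g_u\in L^2_{\mathcal{P}_T}\big(\Omega\times(0,T);L^2(\Lambda)\big)$, $\beta_u\in L^2(\Omega;L^2(0,T;L^2(\Lambda)))$, and $u$ solves
\begin{align*}
u(t)=u_0+\int_0^tF(s)\,ds+\int_0^tg_u(s)\,dW(s),\qquad F:=\Delta u-\diver(\ve u)+\beta_u\in L^2\big(\Omega;L^2(0,T;H^1(\Lambda)^{\ast})\big),
\end{align*}
the identity holding in $H^1(\Lambda)^{\ast}$ (indeed in $L^2(\Lambda)$). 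This is precisely the setting of the variational Itô formula (see, e.g., \cite{PrevotRockner} Theorem~4.2.5, \cite{KryRoz81}, \cite{DPZ14}), which I would invoke to get, for all $t\in[0,T]$ and $\mathds{P}$-a.s.,
\begin{align*}
\|u(t)\|_{L^2(\Lambda)}^2=\|u_0\|_{L^2(\Lambda)}^2&+2\int_0^t\langle F(s),u(s)\rangle\,ds+\int_0^t\|g_u(s)\|_{L^2(\Lambda)}^2\,ds\\
&+2\int_0^t\big(g_u(s),u(s)\big)\,dW(s).
\end{align*}

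Next I would compute the drift pairing using $u(s)\in H^1(\Lambda)$. By the definition of $\Delta u$ as an element of $H^1(\Lambda)^{\ast}$ (cf.~\eqref{220125_01}), $\langle\Delta u(s),u(s)\rangle=-\|\nabla u(s)\|_{L^2(\Lambda)}^2$. The convection term vanishes:
\begin{align*}
\langle\diver\big(\ve(s,\cdot)u(s)\big),u(s)\rangle&=-\int_\Lambda u(s,x)\,\ve(s,x)\cdot\nabla u(s,x)\,dx\\
&=-\halbe\int_\Lambda\ve(s,x)\cdot\nabla\big(|u(s,x)|^2\big)\,dx=\halbe\int_\Lambda\diver(\ve(s,x))\,|u(s,x)|^2\,dx=0,
\end{align*}
where the integration by parts produces no boundary term because $\ve\cdot\mathbf{n}=0$ on $\partial\Lambda$ and the last equality uses $\diver\ve=0$ (Assumption~$\mathscr{A}_4$); the reaction term contributes $\int_\Lambda\beta_u(s,x)u(s,x)\,dx$. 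Substituting, writing $Y(t):=\|u(t)\|_{L^2(\Lambda)}^2$ and applying the product rule $d(e^{-ct}Y(t))=e^{-ct}\,dY(t)-ce^{-ct}Y(t)\,dt$ (legitimate since $t\mapsto e^{-ct}$ is smooth, deterministic and of finite variation), I would obtain $\mathds{P}$-a.s.
\begin{align*}
e^{-ct}\|u(t)\|_{L^2(\Lambda)}^2+2\int_0^te^{-cs}\|\nabla u(s)\|_{L^2(\Lambda)}^2\,ds=\,&\|u_0\|_{L^2(\Lambda)}^2-c\int_0^te^{-cs}\|u(s)\|_{L^2(\Lambda)}^2\,ds\\
&+\int_0^te^{-cs}\|g_u(s)\|_{L^2(\Lambda)}^2\,ds+2\int_0^te^{-cs}\int_\Lambda\beta_u(s,x)u(s,x)\,dx\,ds\\
&+2\int_0^te^{-cs}\big(g_u(s),u(s)\big)\,dW(s).
\end{align*}
Taking expectations then removes the last (stochastic) integral, because $s\mapsto e^{-cs}\big(g_u(s),u(s)\big)$ is predictable and, by Cauchy--Schwarz together with $u\in L^2(\Omega;\mathscr{C}([0,T];L^2(\Lambda)))$ and $g_u\in L^2(\Omega;L^2(0,T;L^2(\Lambda)))$, belongs to $L^2(\Omega\times(0,T))$; a use of Fubini's theorem to exchange $\E$ with the time integrals then gives exactly \eqref{energy}.

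The only delicate point is the first step: one must check scrupulously that the regularity collected in Proposition~\ref{PTTL} matches the hypotheses of the variational Itô formula — in particular that the drift $F$ and the diffusion $g_u$ are predictable (this is inherited at the limit from the finite-volume approximations) and have the required integrability, and that $u$ admits the continuous $L^2(\Lambda)$-valued modification on which the pointwise-in-time identity is asserted. Everything after that — the algebraic identification of $\langle F,u\rangle$ and the insertion of the exponential weight — is routine.
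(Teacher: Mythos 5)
Your proposal is correct and follows essentially the same route as the paper: both apply the variational It\^o formula to the square of the $L^2(\Lambda)$-norm of $u$ and kill the convection contribution $\int_\Lambda (\ve u)\cdot\nabla u\,dx=\tfrac12\int_\Lambda\ve\cdot\nabla(u^2)\,dx=0$ using $\diver\ve=0$ and $\ve\cdot\mathbf{n}=0$ on $\partial\Lambda$. The only cosmetic difference is that the paper applies It\^o directly to the time-dependent functional $(t,v)\mapsto e^{-ct}\|v\|_{L^2(\Lambda)}^2$, whereas you first treat $\|u(t)\|_{L^2(\Lambda)}^2$ and then insert the weight by the deterministic product rule.
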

\begin{proof} It is a direct application of It\^o formula to the stochastic process $u$ and the functional $\Psi: (t,v)\mapsto e^{-ct}||v||^2_{L^2(\Lambda)}$ defined on $[0,T]\times L^2(\Lambda)$. Le us precise that in the application of It\^o formula , 
the following contribution of the flux term appears
$$2\int_0^te^{-cs}\E\left[\int_\Lambda \big(\ve(s,x)u(s,x)\big)\cdot \nabla u(s,x)\,dx\right]ds$$
and since $\ve$ is divergence-free and satisfies $\ve\cdot \mathbf{ n}=0$ on $[0,T]\times \partial\Lambda$, one gets 
\begin{eqnarray*}
&&\int_0^te^{-cs}\E\left[\int_\Lambda \big(\ve(s,x)u(s,x)\big)\cdot \nabla u(s,x)\,dx\right]\,ds\\
&=&\int_0^te^{-cs}\E\left[\int_\Lambda \ve(s,x)\cdot \nabla \left (\int_0^{u(s,x)}zdz\right)\,dx\right]\,ds\\
&=&0.
\end{eqnarray*}
\end{proof}

\subsection{Identification of weak limits coming from the non-linear terms}

The next result gives a lower bound on the inferior limit of the integral over the interval $[0,T]$ of an exponential weight in time norm of $\E[|u^r_{h_m,N_m}(\cdot)|^2_{1,h}]$. This result is needed for the identification of the weak limits $g_u$ and $\beta_u$ of respectively $(g(u^l_{h,N}))_m$ and $(\beta(u^r_{h,N}))_m$. Indeed, in such an identification procedure, we are led to make appear the term $\int_0^T\int_0^te^{-cs} \E\left[\int_{\Lambda}|\nabla u(s,x) |^2\,dx\right]\,ds\,dt$ (with a constant $c>0$), and the following lemma is essential for that.

\begin{lem}\label{keylemma} For any $c>0$, the stochastic process $u$ introduced in Proposition \ref{addreg u} satisfies the following inequality:
\begin{eqnarray}\label{liminfinequality}
\hspace*{-0.4cm}\int_0^T\int_0^t e^{-cs}\E\left[\int_{\Lambda}|\nabla u(x,s) |^2\,dx\right]\,ds\,dt\leq\liminf_{m\rightarrow+\infty}\int_0^T\int_{0}^{t}e^{-cs}\E[|u^r_{h_m,N_m}(s)|^2_{1,h_m}]\,ds\,dt.
\end{eqnarray}
\end{lem}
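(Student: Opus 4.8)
The plan is to derive \eqref{liminfinequality} from the weak convergence $u^r_{h_m,N_m}\rightharpoonup u$ in $L^2(\Omega;L^2(0,T;L^2(\Lambda)))$ established in Proposition~\ref{addreg u}, by combining a sharp discrete translation estimate with weak lower semicontinuity of $L^2$-norms. The translation estimate I would use is the following refinement of \cite[Lemma~9.3]{gal}: for $w_h\equiv(w_K)_{K\in\Tau}\in\re^{d_h}$, any unit vector $\mathbf{e}\in\re^2$, any $\rho>0$ and any open set $\Lambda'$ with $\overline{\Lambda'}\subset\Lambda$ and $\rho<\operatorname{dist}(\Lambda',\partial\Lambda)$,
\[
\int_{\Lambda'}\big|w_h(x+\rho\mathbf{e})-w_h(x)\big|^2\,dx\ \le\ \rho\,(\rho+2h)\,|w_h|_{1,h}^2 .
\]
It follows by writing $w_h(x+\rho\mathbf{e})-w_h(x)$ as a telescoping sum $\sum_l(w_{K_l}-w_{K_{l-1}})$ over the control volumes $K_0,\dots,K_p$ and edges $\sigma_1,\dots,\sigma_p$ met by the segment $[x,x+\rho\mathbf{e}]$: the orthogonality condition $x_{K_l}-x_{K_{l-1}}=d_{K_{l-1}|K_l}\mathbf{n}_{K_{l-1},\sigma_l}$ gives $\mathbf{e}\cdot\mathbf{n}_{K_{l-1},\sigma_l}>0$ along the segment and $\sum_l d_{K_{l-1}|K_l}\,|\mathbf{e}\cdot\mathbf{n}_{K_{l-1},\sigma_l}|=(x_{K_p}-x_{K_0})\cdot\mathbf{e}\le\rho+2h$, so a Cauchy--Schwarz inequality along the segment with the weights $d_{K_{l-1}|K_l}|\mathbf{e}\cdot\mathbf{n}_{K_{l-1},\sigma_l}|$, integration in $x$, and Fubini (the set of $x$ whose segment meets a given edge $\sigma$ having measure $m_\sigma\rho|\mathbf{e}\cdot\mathbf{n}_{K,\sigma}|$) produce exactly the right-hand side, the weights $|\mathbf{e}\cdot\mathbf{n}_{K,\sigma}|$ cancelling.

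Next I would apply this inequality to $w_h=u^r_{h_m,N_m}(\omega,s)$ and integrate it against $e^{-cs}\mathds{1}_{\{0\le s\le t\}}\,d\mathds{P}\,ds\,dt$. Since $u^r_{h_m,N_m}\rightharpoonup u$ weakly in $L^2(\Omega;L^2(0,T;L^2(\Lambda)))$, the shifts $u^r_{h_m,N_m}(\cdot+\rho\mathbf{e})-u^r_{h_m,N_m}$ converge weakly in $L^2(\Omega\times(0,T)\times\Lambda')$ to $u(\cdot+\rho\mathbf{e})-u$, so weak lower semicontinuity of the norm in $L^2\big(\Omega\times(0,T)\times(0,T)\times\Lambda'\big)$ equipped with the weight $e^{-cs}\mathds{1}_{\{s\le t\}}$, together with $h_m\to0$, yields
\[
\int_0^T\!\!\int_0^t e^{-cs}\,\E\Big[\|u(s,\cdot+\rho\mathbf{e})-u(s)\|_{L^2(\Lambda')}^2\Big]ds\,dt\ \le\ \rho^2\liminf_{m\to+\infty}\int_0^T\!\!\int_0^t e^{-cs}\,\E\Big[|u^r_{h_m,N_m}(s)|_{1,h_m}^2\Big]ds\,dt .
\]
Dividing by $\rho^2$ and letting $\rho\downarrow0$, the difference quotients $\rho^{-1}\big(u(s,\cdot+\rho\mathbf{e})-u(s)\big)$ tend to $\mathbf{e}\cdot\nabla u(s)$ in $L^2(\Lambda')$ for a.e.\ $(\omega,s)$ and are bounded by $\|\nabla u(s)\|_{L^2(\Lambda)}$, which is square-integrable on $\Omega\times(0,T)$ since $u\in L^2(\Omega;L^2(0,T;H^1(\Lambda)))$ (Proposition~\ref{addreg u}); by dominated convergence one obtains, for every unit $\mathbf{e}\in\re^2$ and every $\Lambda'$ as above,
\[
\int_0^T\!\!\int_0^t e^{-cs}\,\E\Big[\int_{\Lambda'}|\mathbf{e}\cdot\nabla u(s,x)|^2\,dx\Big]ds\,dt\ \le\ \liminf_{m\to+\infty}\int_0^T\!\!\int_0^t e^{-cs}\,\E\Big[|u^r_{h_m,N_m}(s)|_{1,h_m}^2\Big]ds\,dt .
\]

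It remains to pass from a fixed direction to the full gradient, and this is where I expect the main difficulty to lie. Because the translation estimate above is local — its right-hand side can be localized to the edges lying in a fixed neighbourhood of any sub-box $Q\subset\Lambda'$ — one may split $\Lambda'$ into small cubes and, on each cube and each $(\omega,s)$, translate in a direction aligned with $\nabla u(s)$ there; summing the local estimates, refining the partition, and checking that the overlaps of the cubes' shadows contribute a term which vanishes in the limit (using $u(s)\in H^1(\Lambda)$ and the non-concentration of the discrete energies), the left-hand side of the last display is replaced by $\int_0^T\int_0^t e^{-cs}\,\E\big[\int_{\Lambda'}|\nabla u(s,x)|^2\,dx\big]ds\,dt$. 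A monotone passage $\Lambda'\uparrow\Lambda$ then gives \eqref{liminfinequality}. This localization step — synchronising the direction-dependent edge accounting of $|\cdot|_{1,h}$ with a $\nabla u$-adapted choice of shift directions while controlling the boundary and overlap errors — is exactly the mechanism behind the lower-semicontinuity assertions in \cite[Lemma~2]{eymardgallouet} and \cite[Lemma~9.3, Theorem~14.3]{gal}, which may alternatively be invoked directly once the reduction from $\Omega\times(0,T)$ to a.e.\ $(\omega,s)$ has been organised as above.
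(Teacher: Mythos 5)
Your fixed-direction part is sound: the discrete translation estimate, the weak convergence of the shifted differences, the weak lower semicontinuity of the weighted norm, and the passage $\rho\downarrow 0$ all work, and they deliver, for each \emph{fixed} unit vector $\mathbf{e}$,
\[
\int_0^T\!\!\int_0^t e^{-cs}\,\E\Big[\|\mathbf{e}\cdot\nabla u(s)\|_{L^2(\Lambda')}^2\Big]\,ds\,dt\;\le\;\liminf_{m\to+\infty}\int_0^T\!\!\int_0^t e^{-cs}\,\E\big[|\uhnr(s)|_{1,h}^2\big]\,ds\,dt .
\]
This is exactly where the genuine gap sits: summing over an orthonormal basis only yields the full gradient with the constant $2$ (the space dimension), i.e. $\|\nabla u\|^2\le 2\liminf_m|\uhnr|^2_{1,h}$, whereas \eqref{liminfinequality} requires the constant $1$. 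The constant is not cosmetic: in Proposition \ref{PISL} the term $2\int_0^T\int_0^te^{-cs}\E[\|\nabla u(s)\|^2]\,ds\,dt$ produced by the energy equality \eqref{energy} must be absorbed \emph{exactly} by $2\liminf_m\int_0^T\int_0^te^{-cs}\E[|\uhnr(s)|^2_{1,h}]\,ds\,dt$; with a factor $2$ the cancellation, and hence the strong convergence and the identification $g_u=g(u)$, collapse. Your proposed remedy --- cube-wise shifts in a direction adapted to $\nabla u$ --- is the right idea in spirit, but your sketch rests on a ``non-concentration of the discrete energies'' near the cube boundaries to control the overlap of the cubes' shadows, and no such equi-integrability of $\frac{m_\sigma}{\dkl}|u_K^{n+1}-u_L^{n+1}|^2$ along the sequence $m\to\infty$ is available (only liminf-type bounds are); making the cube grid $m$-dependent to average out the boundary energy then conflicts with the weak lower semicontinuity step, and the direction field must in addition depend measurably on $(\omega,s)$, forcing a further approximation of $u$ by simple processes with smooth values. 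Finally, the fallback you invoke does not close the gap: \cite[Lemma 2]{eymardgallouet} and \cite[Theorem 14.3]{gal} are used in Proposition \ref{addreg u} to get the $H^1$-regularity of the limit and the weak convergence of the discrete gradients, not the sharp lower semicontinuity with constant $1$.

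The paper takes a different, duality-based route (a stochastic, evolutionary version of \cite[Lemma 2.2]{HerbinMarchand}) which obtains the sharp constant automatically: approximate $u$ by simple processes $\upe=\sum_i\mathds{1}_{A_i}\vartheta_{i,\epsilon}$ with smooth $\vartheta_{i,\epsilon}$ satisfying $\nabla\vartheta_{i,\epsilon}\cdot\mathbf{n}=0$ on $\partial\Lambda$; write $\E[\int_0^Te^{-ct}\int_\Lambda\nabla u\cdot\nabla\upe]$ as the limit of the discrete bilinear form $\E[\sum_n\sum_{\sigma=K|L}(u_K^{n+1}-u_L^{n+1})\int_{t_n}^{t_{n+1}}e^{-ct}\int_\sigma\nabla\upe\cdot\mathbf{n}_{K,L}]$ using the weak convergence of $\uhnr$ and the consistency of the two-point flux; bound this form by Cauchy--Schwarz by $(\E[\int_0^Te^{-ct}|\uhnr|^2_{1,h}\,dt])^{1/2}$ times the discrete $H^1$-seminorm of the point-interpolant of $\upe$, which converges to $\|\nabla\upe\|$ exactly, with no dimensional factor; then divide and let $\epsilon\to0$, $p\to+\infty$. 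The test function automatically ``chooses the right direction'', which is precisely what your cube decomposition attempts by hand; carrying out that localization rigorously would be considerably harder than the duality argument.
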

\begin{proof} The idea is to generalize the Lemma 2.2 of \cite{HerbinMarchand} to the evolutionary in time and stochastic case.
Following their work, we start by introducing an approximation of $u$ constructed from several density results. For this we will need the following functional spaces: 
\begin{eqnarray*}
\mathscr{D}(\overline{\Lambda})&=&\left\{\varphi_{|_\Lambda}, \varphi\in \mathscr{C}^\infty_c(\mathbb{R}^d)\right\}\\
\mathcal{V}&=&\left\{\varphi\in \mathscr{D}(\overline{\Lambda}) : \nabla \varphi\cdot\textbf{n}=0\text{ on }\partial \Lambda\right\}\\
\mathscr{V}&=&\left\{\sum_{k \text{ finite}}\xi_k\varphi_k, \xi_k\in \mathscr{C}^\infty_c (0,T) \text{ and }\varphi_k\in\mathcal{V} \right\}.
\end{eqnarray*} Firstly, note that from \cite{ABM} (Chapter 6, Remark 6.2.1.p.223), $\mathcal{V}$ is a dense part of $H^1(\Lambda)$, and secondly, from \cite{D} (Corollary 1.3.1 p.13), $\mathscr{V}$ is dense in $L^2(0,T;H^1(\Lambda))$.
Now owing to \cite{D} (Proposition 1.3.1 p.13), since $u$ belongs to $L^2(\Omega; L^2(0,T;H^1(\Lambda)))$, then there exists a sequence $(u_p)_{p\in \mathbb{N}^{\star}}$ of simple random variables $\displaystyle u_p=\sum_{i=1}^p\mathds{1}_{A_i}\vartheta_i$, where for any $i$ in $\{1,...,p\}$, $A_i\in \mathcal{A}$, $\vartheta_i\in L^2(0,T;H^1(\Lambda))$ and such that 
$$u_p\xrightarrow[p\rightarrow+\infty]{}u\text{ in } L^2(\Omega; L^2(0,T;H^1(\Lambda))).$$ 
Set $p\in \mathbb{N}^{\star}$, then for $i\in \{1,...,p\}$ since $\vartheta_i\in L^2(0,T;H^1(\Lambda))$ there exists a sequence $(\vartheta_{i,\epsilon_i})_{\epsilon_i>0}\subset \mathscr{V}$ such that 
$$\vartheta_{i,\epsilon_i}\xrightarrow[\epsilon_i\rightarrow 0]{}\vartheta_i \text{ in } L^2(0,T;H^1(\Lambda)).$$
Note that since $p$ is finite, we can introduce a common $\epsilon>0$ such that 
$$\vartheta_{i,\epsilon}\xrightarrow[\epsilon\rightarrow 0]{}\vartheta_i \text{ in } L^2(0,T;H^1(\Lambda)).$$
Then we define $\displaystyle \upe=\sum_{i=1}^p \mathds{1}_{A_i}\vartheta_{i,\epsilon}$ and we have for any fixed $p$, $$\upe\xrightarrow[\epsilon\rightarrow 0]{}u_p\text{ in } L^2(\Omega; L^2(0,T;H^1(\Lambda))).$$
Set $c>0$, $p\in  \mathbb{N}^{\star}$ and $\epsilon>0$. We denote $\upenk=\upe(\tn, x_K)$ $\mathds{P}$-a.s. in $\Omega$, for any $n\in \{0,...,N-1\}$ and any $K\in \Tau$. Due to the weak convergence of the finite-volume approximation $(u^r_{h,N})_m$ towards $u$ in $L^2(\Omega; L^2(0,T;L^2(\Lambda)))$, we have
\begin{align}\label{LienT1m}
\begin{split}
&\E\left[\int_0^T\emct\int_{\Lambda} \nabla u(t,x)\cdot \nabla \upe(t,x)\,dx\,dt \right]\\
=\,&-\E\left[\int_0^T\emct\int_{\Lambda} u(t,x)  \diver\left(\nabla \upe(t,x)\right)\,dx\,dt \right]\\
=\,&-\lim_{m\rightarrow +\infty} \E\left[\int_0^T\emct\int_{\Lambda} u^r_{h,N}(t,x)\diver\left(\nabla \upe(t,x)\right)\,dx\,dt \right].
\end{split}
\end{align}
Note that
\begin{eqnarray*}
&&\E\left[\int_0^T\emct\int_{\Lambda} u^r_{h,N}(t,x)\diver\left(\nabla \upe(t,x)\right)\,dx\,dt \right]\\
&=&\E\left[\sum_{n=0}^{N-1}\sum_{K\in \Tau}\int_{\tn}^{\tnp}\emct\int_Ku^{n+1}_K\diver\left(\nabla \upe(t,x)\right)\,dx\,dt \right]\\
&=&\E\left[\sum_{n=0}^{N-1}\sum_{K\in \Tau}u^{n+1}_K\int_{\tn}^{\tnp}\emct \int_{\partial K}\nabla \upe(t,x)\cdot\textbf{n}_{K}(x)\,d\gamma(x)\,dt \right]\\
&=&\E\left[\sum_{n=0}^{N-1}\sum_{\sigma=K|L\in \edgesint}(u^{n+1}_K-u^{n+1}_L)\int_{\tn}^{\tnp}\emct\int_{\sigma}\nabla \upe(t,x)\cdot\textbf{n}_{K,L}\,d\gamma(x)\,dt \right]\\
&=&T_{1,m}-T_{2,m}+T_{2,m}
\end{eqnarray*}
where
\begin{eqnarray*}
T_{1,m}&=&\E\left[\sum_{n=0}^{N-1}\sum_{\sigma=K|L\in \edgesint}(u^{n+1}_K-u^{n+1}_L)\int_{\tn}^{\tnp}\emct\int_{\sigma}\nabla \upe(t,x)\cdot\textbf{n}_{K,L}\,d\gamma(x)\,dt \right]\\
\text{and }T_{2,m}&=&\E\left[\sum_{n=0}^{N-1}\sum_{\sigma=K|L\in \edgesint}(u^{n+1}_K-u^{n+1}_L)\int_{\tn}^{\tnp}\emct\frac{m_{\sigma}}{d_{K|L}}(\upenpl-\upenpk) \,dt\right].
\end{eqnarray*}
Firstly, one shows that $|T_{1,m}-T_{2,m}|\xrightarrow[m\rightarrow +\infty]{}0$. To show this, we rewrite $T_{1,m}-T_{2,m}$ as
\begin{eqnarray*}
&&T_{1,m}-T_{2,m}
=\E\left[\sum_{n=0}^{N-1}\sum_{\sigma=K|L\in \edgesint}\Delta t m_{\sigma}(u^{n+1}_K-u^{n+1}_L)\times R^{n+1}_{\sigma}(\upe) \right]
\end{eqnarray*}
where
\begin{eqnarray}\label{Rnpun}
R^{n+1}_{\sigma}(\upe)=\frac{1}{\Delta t m_{\sigma}}\int_{\tn}^{\tnp}\emct\int_{\sigma}\left(\nabla \upe(t,x)\cdot\textbf{n}_{K,L}-\frac{\upenpl-\upenpk}{d_{K|L}}\right)\,d\gamma(x)\,dt
\end{eqnarray}
which can be rewritten as
\begin{eqnarray*}
R^{n+1}_{\sigma}(\upe)&=&\frac{1}{\Delta t m_{\sigma}}\int_{\tn}^{\tnp}\hspace*{-0.2cm}e^{-ct}\hspace*{-0.1cm}\int_{\sigma}\left(\nabla \upe(t,x)\cdot\textbf{n}_{K,L}-\frac{\upe(t,x_L)-\upe(t,x_K)}{d_{K|L}}\right)\,d\gamma(x)\,dt\\
&&+\frac{1}{\Delta t }\int_{\tn}^{\tnp}e^{-ct}\left(\frac{\upe(t,x_L)-\upe(t,x_K)}{d_{K|L}}-\frac{\upenpl-\upenpk}{d_{K|L}}\right)\,dt.
\end{eqnarray*}
Let us denote by $\mathcal{H}_{\upe}$ the Hessian matrix of $\upe$ defined for any $i,j\in \{1,..,d\}$ by $\displaystyle(\mathcal{H}_{\upe})_{ij}=\frac{\partial^2 \upe}{\partial x_i\partial x_j}$. Then, 
thanks to a Taylor's expansion, for any $t\in [0,T]$ and any $x\in\sigma$, we have \begin{eqnarray*}
\upe(t,x_L)&=&\upe(t,x)+(x_L-x)\cdot\nabla \upe(t,x)+ \frac{1}{2}(x_L-x)^T\mathcal{H}_{\upe}(t,\overline{x})(x_L-x)\\
\upe(t,x_K)&=&\upe(t,x)+(x_K-x)\cdot\nabla \upe(t,x)+ \frac{1}{2}(x_K-x)^T\mathcal{H}_{\upe}(t,\hat{x})(x_K-x),
\end{eqnarray*}
where $\overline{x}=\lambda x_L+(1-\lambda)x$ and $\hat{x}=\alpha x_K+(1-\alpha)x$ for $\lambda,\alpha\in [0,1]$. Remind that the orthogonality condition on the mesh implies that for any $\sigma=K|L\in \edgesint$, $x_L-x_K=d_{K|L}\mathbf{n}_{K,\sigma}$ and so, using \eqref{hoverdkl}
\begin{align}\label{TExpx}
\begin{split}
&\left|\nabla \upe(t,x)\cdot\textbf{n}_{K,L}-\frac{\upe(t,x_L)-\upe(t,x_K)}{d_{K|L}}\right| \\
=\,&\left|\frac{1}{2}\frac{(x_K-x)^T\mathcal{H}_{\upe}(t,\hat{x})(x_K-x)-(x_L-x)^T\mathcal{H}_{\upe}(t,\overline{x})(x_L-x)}{d_{K|L}}\right|\\
\leq\,&h\reg\sup_{(t,y)\in [0,T]\times \Lambda}|\mathcal{H}_{\upe}(t,y)|.
\end{split}
\end{align}
Additionally, using the following equalities for any $t$ in $[\tn,\tnp]$
$$\upenpk=\upe(\tnp,x_K)=\upe(t,x_K)+\int_{t}^{\tnp}\partial_t \upe(s,x_K)\,ds$$
and 
$$\upenpl=\upe(\tnp,x_L)=\upe(t,x_L)+\int_{t}^{\tnp}\partial_t \upe(s,x_L)\,ds,$$
and the fact that for any $s$ in $[0,T]$
\begin{eqnarray*}
\big|\partial_t \upe(s,x_K)-\partial_t \upe(s,x_L)\big|&=&\left|\int_0^1\Big(\nabla\big(\partial_t \upe\big)(s,\mu x_K+(1-\mu) x_L)\Big)\cdot(x_K-x_L)\,d\mu\right|\\
&\leq& 2h \sup_{(s,x)\in [0,T]\times \Lambda}|\nabla(\partial_t \upe)(s,x)|,
\end{eqnarray*}
we obtain using again \eqref{hoverdkl}
\begin{align}\label{TExpt}
\begin{split}
&\left|\frac{\upe(t,x_L)-\upe(t,x_K)}{d_{K|L}}-\frac{\upenpl-\upenpk}{d_{K|L}}\right|\\
=\,&\left|\int_{t}^{\tnp}\frac{\partial_t \upe(s,x_K)-\partial_t \upe(s,x_L)}{\dkl}\,ds\right|\\
\leq\,&\Delta t\reg \hspace*{-0.2cm} \sup_{(s,x)\in [0,T]\times \Lambda}\hspace*{-0.15cm}|\nabla(\partial_t \upe)(s,x)|.
\end{split}
\end{align}
Noticing that $\displaystyle \int_{\tn}^{\tnp}\emct \,dt\leq \dlt$ and combining \eqref{TExpx} and \eqref{TExpt}, we obtain the existence of a constant $K_{p,\epsilon}>0$ only depending on $\upe$ such that
\begin{eqnarray}\label{bornekpe}
\E\left[|R^{n+1}_{\sigma}(\upe)|^2\right]&\leq& \big(K_{p,\epsilon}(\Delta t+h)\reg\big)^2.
\end{eqnarray}
Applying Cauchy-Schwarz inequality yields
\begin{align}\label{majt1m}
\begin{split}
&|T_{1,m}-T_{2,m}|^2\\
=\,&\left|\E\left[\sum_{n=0}^{N-1}\sum_{\sigma=K|L\in \edgesint}\Delta t \frac{m_{\sigma}}{\sqrt{d_{K|L}}}(u^{n+1}_K-u^{n+1}_L)\times \sqrt{d_{K|L}}R^{n+1}_{\sigma}(\upe) \right]\right|^2\\
\leq\,&\E\left[\sum_{n=0}^{N-1}\sum_{\sigma=K|L\in \edgesint}\Delta t \frac{m_{\sigma}}{d_{K|L}}\big|u^{n+1}_K-u^{n+1}_L\big|^2\right]\\
&\times d\E\left[\sum_{n=0}^{N-1}\sum_{\sigma=K|L\in \edgesint}\Delta t \frac{m_{\sigma}d_{K|L}}{d} |R^{n+1}_{\sigma}(\upe) |^2\right]\\
\leq\,& dT|\Lambda|\big(K_{p,\epsilon}(\Delta t+h)\reg\big)^2 \E\left[\int_{0}^T|u^r_{h,N}(t)|_{1,h}^2\,dt\right], 
\end{split}
\end{align}
which tends to $0$ as $m\rightarrow +\infty$. We can thus affirm that $T_{2,m}$ admits a limit which is known thanks to \eqref{LienT1m}
\begin{eqnarray*}
\lim_{m\rightarrow +\infty}T_{2,m}=\lim_{m\rightarrow +\infty}T_{1,m}
=\E\left[\int_0^T\emct\int_{\Lambda}  \nabla u(t,x)\cdot\nabla \upe(t,x)\,dx\,dt \right].
\end{eqnarray*}

Secondly, thanks to Cauchy-Schwarz inequality, we have
\begin{align}\label{majt2m}
|T_{2,m}|^2\leq\,&\E\left[\sum_{n=0}^{N-1}\sum_{\sigma=K|L\in \edgesint}\int_{\tn}^{\tnp}\emct\frac{m_{\sigma}}{d_{K|L}}\big|u^{n+1}_K-u^{n+1}_L\big|^2\,dt\right] \nonumber \\
&\times \E\left[\sum_{n=0}^{N-1}\sum_{\sigma=K|L\in \edgesint}\int_{\tn}^{\tnp}\emct\frac{m_{\sigma}}{d_{K|L}}\big|\upenpk-\upenpl\big|^2\,dt\right]\\
=\,&\E\left[\int_{0}^T\hspace*{-0.1cm}\emct|u^r_{h,N}(t)|_{1,h}^2\,dt\right]\times  \E\left[\sum_{n=0}^{N-1}\sum_{\sigma=K|L\in \edgesint}\int_{\tn}^{\tnp}\hspace*{-0.2cm}\emct\frac{m_{\sigma}}{d_{K|L}}\big|\upenpk-\upenpl\big|^2 dt\right].\nonumber
\end{align}

Now, due to the regularity of $\upe$, by adapting previous arguments applied to $\uhnr$, one can show that 
\begin{eqnarray*}
&&\lim_{m\rightarrow  +\infty}\E\left[\sum_{n=0}^{N-1}\sum_{\sigma=K|L\in \edgesint}\int_{\tn}^{\tnp}\emct\frac{m_{\sigma}}{d_{K|L}}\big|\upenpk-\upenpl\big|^2\,dt\right]\\
&=&\E\left[\int_0^T\emct\int_{\Lambda}|\nabla \upe(t,x)|^2\,dx\,dt\right].
\end{eqnarray*}

Using this convergence result, one obtains by passing to the inferior limit in \eqref{majt2m}

\begin{eqnarray*}
&&\left(\E\left[\int_0^T\emct\int_{\Lambda} \nabla u(t,x)\cdot\nabla \upe(t,x)\,dx\,dt \right]\right)^2\\
&\leq&\liminf_{m\rightarrow +\infty}\E\left[\int_{0}^T\emct|u^r_{h,N}(t)|_{1,h}^2\,dt\right]\times \E\left[\int_0^T\emct\int_{\Lambda}|\nabla \upe(t,x)|^2\,dx\,dt\right].
\end{eqnarray*}
Then, using strong convergences in $L^2(\Omega; L^2(0,T;H^1(\Lambda)))$ of $(\upe)_{\epsilon}$ towards $u_p$ (for a fixed $p$) as $\epsilon$ goes to $0$ and of $(u_p)_p$ towards $u$ as $p$ goes to $+\infty$, the following holds
\begin{eqnarray*}
\E\left[\int_0^T\emct\int_{\Lambda}|\nabla u(t,x)| ^2\,dx\,dt\right]
&\leq&\liminf_{m\rightarrow +\infty}\E\left[\int_{0}^T\emct|u^r_{h,N}(t)|_{1,h}^2\,dt\right]. \\
\end{eqnarray*}
Note that using the same reasoning we show that for any $t\in [0,T]$, 
\begin{eqnarray*}
\E\left[\int_0^t\emcs\int_{\Lambda}|\nabla u(s,x)|^2\,dx\,ds\right]
&\leq&\liminf_{m\rightarrow +\infty}\E\left[\int_{0}^t\emcs|u^r_{h,N}(s)|_{1,h}^2\,ds\right] \\
\end{eqnarray*}
and thanks to Fatou's Lemma, we finally have
\begin{eqnarray*}
\int_{0}^T\int_0^t\emcs\E\left[\int_{\Lambda}|\nabla u(x,s)| ^2\,dx\right]\,ds\,dt
&=&\int_{0}^T\E\left[\int_0^t\emcs\int_{\Lambda}|\nabla u(s,x)| ^2\,dx\,ds\right]\,dt\\
&\leq&\int_{0}^T\liminf_{m\rightarrow +\infty}\E\left[\int_{0}^t\emcs|u^r_{h,N}(s)|_{1,h}^2\,ds\right]\,dt \\
&\leq&\liminf_{m\rightarrow +\infty}\int_{0}^T\E\left[\int_{0}^t\emcs|u^r_{h,N}(s)|_{1,h}^2\,ds\right]\,dt\\
&=&\liminf_{m\rightarrow +\infty}\int_{0}^T\int_{0}^t\emcs\E\left[|u^r_{h,N}(s)|_{1,h}^2\right]\,ds\,dt,
\end{eqnarray*}
and the proof is complete.
\end{proof}

\begin{remark} Let us detail here why we do consider such an approximation of $u$ in the previous proof. Firstly, we need to have the following equality: 
$$\E\left[\int_0^T \emct\hspace*{-0.1cm}\int_{\Lambda}\nabla u(t,x)\cdot \nabla \upe(t,x)\,dx\,dt\right]=-\E\left[\int_0^T\emct\hspace*{-0.1cm} \int_{\Lambda} u(t,x)\diver(\nabla \upe(t,x))\,dx\,dt\right],$$
and if $\nabla \upe\cdot\mathbf{ n}\neq 0$ then the boundary term appears in the application of the Gauss-Green formula and our argumentation fails. For this reason, we choose the density of $\mathcal{V}$ in $H^1(\Lambda)$. Secondly, we need a control of the term $\E\big[|R^{n+1}_{\sigma}(\upe)|^2\big]$, and for this reason we choose a particular approximation $(\upe)_{p,\epsilon}$ of elementary processes's type. Indeed, if we consider another approximation $(U_{p,\epsilon})_{p,\epsilon}$ with the only information $(U_{p,\epsilon})_{p,\epsilon}\subset L^2(\Omega; \mathscr{V})$, we don't know if we control or not the expectation of the following quantities: 
$$\left(\sup_{(t,y)\in [0,T]\times \Lambda}\big|\mathcal{H}_{U_{p,\epsilon}}(t,y)\big|\right)^2\text{ and } \left(\sup_{(s,x)\in [0,T]\times \Lambda}\big|\nabla(\partial_t U_{p,\epsilon})( s,x)\big|\right)^2.$$
\end{remark}

Now, we have all the necessary tools on the one hand for the identification of $g_u$ and $\beta_u$, and on the other hand for completing the proof of Theorem \ref{mainresult}.

\begin{prop}\label{PISL}
The sequences $(u_{h,N}^r)_m$ and $(u_{h,N}^l)_m$ converge strongly in \linebreak $L^2(\Omega;L^2(0,T;L^2(\Lambda)))$ to the unique variational solution of Problem \eqref{equation} in the sense of Definition \ref{solution}.
\end{prop}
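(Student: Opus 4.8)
First I would reduce the statement to convergence of (weighted) norms. Since $\uhnr$ and $\uhnl$ already converge weakly to $u$ in the Hilbert space $L^2(\Omega;L^2(0,T;L^2(\Lambda)))$, strong convergence follows once the corresponding $e^{-ct}$-weighted norms are shown to converge; moreover the identifications $g_u=g(u)$ and $\beta_u=\beta(u)$ will come out of the very same computation, with the free parameter $c>0$ eventually chosen large to close the loop.

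To that end I would establish an \emph{exact} discrete energy identity. Multiplying the scheme \eqref{equationapproxter} by $2\Delta t\,u_K^{k+1}$, summing over $K\in\Tau$ and taking the expectation, the diffusion term produces $2\Delta t\,\E|u_h^{k+1}|_{1,h}^2$ via the discrete integration by parts \eqref{PInt}, the convection contribution is nonnegative by the computation used in Proposition~\ref{bounds}, and the time-difference term yields $\E\|u_h^{k+1}\|^2-\E\|u_h^k\|^2+\E\|u_h^{k+1}-u_h^k\|^2$. The crucial point is to treat the martingale term \emph{without} Young's inequality: writing $u_h^{k+1}=u_h^k+(u_h^{k+1}-u_h^k)$, the $u_h^k$-part has zero expectation by independence, and in the remaining part one substitutes the scheme once more to obtain, with $R_h^{k+1}:=u_h^{k+1}-u_h^k-g(u_h^k)(W^{k+1}-W^k)$,
\[
2\,\E\big[\big(g(u_h^k)(W^{k+1}-W^k),\,u_h^{k+1}-u_h^k\big)_{L^2(\Lambda)}\big]=\Delta t\,\E\|g(u_h^k)\|_{L^2(\Lambda)}^2+\E\|u_h^{k+1}-u_h^k\|_{L^2(\Lambda)}^2-\E\|R_h^{k+1}\|_{L^2(\Lambda)}^2 .
\]
The $\E\|u_h^{k+1}-u_h^k\|^2$ terms then cancel, leaving an identity whose left-hand side still carries the two nonnegative ``bonus'' terms coming from convection and from $\E\|R_h^{k+1}\|^2$. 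Multiplying this identity by $e^{-ct_{k+1}}$, summing over $k\in\{0,\dots,n-1\}$ (the Abel summation producing the term $c\int e^{-cs}\E\|\uhnl(s)\|^2\,ds$) and then integrating in $t$ over $[0,T]$, one gets an inequality that can be compared term by term with the stochastic energy equality of Lemma~\ref{lemenergy} integrated once in $t$, valid for any $c>0$.

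The heart of the proof is the passage to the limit $m\to+\infty$ in this comparison. On the ``$u$-side'' I would discard the nonnegative bonus terms, use weak lower semicontinuity of the norm for the $\int e^{-ct}\E\|\uhnr\|^2$ and $c\int\!\!\int e^{-cs}\E\|\uhnl\|^2$ terms, invoke Lemma~\ref{keylemma} for the $\int\!\!\int e^{-cs}\E|\uhnr|_{1,h}^2$ term, and use that $u_h^0\to u_0$ in $L^2(\Omega;L^2(\Lambda))$ and $\uhnr-\uhnl\to 0$ in $L^2(\Omega;L^2(0,T;L^2(\Lambda)))$. On the ``data-side'' I would expand, for the left approximation,
\[
\|g(\uhnl)\|^2-\|g_u\|^2=\|g(\uhnl)-g(u)\|^2+2\big(g(\uhnl)-g(u),g(u)\big)+\big(\|g(u)\|^2-\|g_u\|^2\big)
\]
and decompose $(\beta(\uhnr),\uhnr)-(\beta_u,u)$ analogously; the cross terms converge by the weak convergences of Lemmas~\ref{CVguhnl}--\ref{CVbetauhnl} and Proposition~\ref{addreg u}, the last two $g$-terms combine in the limit into $-\E\|g_u-g(u)\|^2\le 0$, the corresponding $\beta$-terms cancel, and what is left is controlled by $L_g^2\,\|\uhnl-u\|^2$ and $L_\beta\,\|\uhnr-u\|^2$ thanks to $\mathscr{A}_2$--$\mathscr{A}_3$. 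Writing also $\|w\|^2-\|u\|^2=\|w-u\|^2+2(w-u,u)$ for $w\in\{\uhnr,\uhnl\}$ and discarding those cross terms by weak convergence, one arrives at an inequality of the form
\[
\limsup_{m\to+\infty}\int_0^T e^{-ct}\E\|\uhnr(t)-u(t)\|_{L^2(\Lambda)}^2\,dt+(c-\kappa)\limsup_{m\to+\infty}\int_0^T\!\!\int_0^t e^{-cs}\E\|\uhnl(s)-u(s)\|_{L^2(\Lambda)}^2\,ds\,dt+\int_0^T\!\!\int_0^t e^{-cs}\E\|g_u(s)-g(u(s))\|_{L^2(\Lambda)}^2\,ds\,dt\le 0,
\]
where $\kappa$ depends only on $L_g$ and $L_\beta$. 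Choosing $c>\kappa$ makes all three terms nonnegative, hence each one vanishes: in particular $\int_0^T e^{-ct}\E\|\uhnr(t)-u(t)\|^2\,dt\to 0$, i.e.\ $\uhnr\to u$ strongly in $L^2(\Omega;L^2(0,T;L^2(\Lambda)))$ (and then $\uhnl\to u$ too, since $\uhnr-\uhnl\to 0$), while $g_u=g(u)$ almost everywhere.

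It remains to conclude. From the strong convergence and $\mathscr{A}_3$ one gets $\beta(\uhnr)\to\beta(u)$, hence $\beta_u=\beta(u)$. Substituting $g_u=g(u)$ and $\beta_u=\beta(u)$ into Proposition~\ref{PTTL} shows that $u$ is a variational solution of \eqref{equation} in the sense of Definition~\ref{solution}, hence \emph{the} variational solution by the uniqueness recalled after Definition~\ref{solution}. Since this limit is independent of the subsequences extracted in Proposition~\ref{addreg u} and in Lemmas~\ref{CVguhnl}--\ref{CVbetauhnl}, the usual subsequence argument then shows that the full sequences $(\uhnr)_m$ and $(\uhnl)_m$ converge strongly to $u$, which also finishes the proof of Theorem~\ref{mainresult}. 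The main difficulty is the third paragraph: only because the exponential weight puts the term $+c\int\!\!\int e^{-cs}\E\|\uhnl-u\|^2$ on the favourable side of the energy balance can the Lipschitz contributions of $g$ and $\beta$ be absorbed, which is what lets the strong convergence and the identification of the nonlinear weak limits be obtained simultaneously.
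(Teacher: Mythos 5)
Your proposal is correct and follows essentially the same route as the paper: an exponentially weighted discrete energy estimate obtained by testing the scheme with $u_K^{k+1}$, integration in $t$ and comparison with the energy equality of Lemma~\ref{lemenergy}, Lemma~\ref{keylemma} for the discrete gradient term, and absorption of the Lipschitz contributions of $g$ and $\beta$ by choosing $c$ large, yielding simultaneously the strong convergence and $g_u=g(u)$, then $\beta_u=\beta(u)$ and the identification of $u$. The only (immaterial) variants are that you treat the martingale term by an exact polarization identity where the paper uses Young's inequality, and that you expand $\|\uhnr-u\|^2$ directly where the paper concludes via norm convergence plus weak convergence.
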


\begin{proof}
Let us fix $n\in \{0,...,N-1\}$, $K\in \Tau$, and multiply \eqref{equationapproxter}
by $\Delta t u_K^{n+1}$, use the formula $a(a-b)=\frac{1}{2}(a^2-b^2+(a-b)^2)$ with $a=u_K^{n+1}$ and $b=u_K^n$, take the expectation, and proceed as for the obtention of \eqref{term3} to arrive at
\begin{eqnarray*}
&&\frac{m_K}{2}\E\left[(u_K^{n+1})^2-(u_K^n)^2\right]+\frac{m_K}{2}\E\left[(u_K^{n+1}-u_K^n)^2\right]\\
&&+\dlt\sum_{\sigma=K|L\in\edgesint\cap\edges_K} m_\sigma (\vksnp)^-\erwb(\unpk-\unpl)u_K^{n+1}\erwe\\
&&+\Delta t\sum_{\sigma=K|L\in\edgesint\cap\edges_K}\frac{m_\sigma}{\dkl}\E\left[(u_K^{n+1}-u_L^{n+1})u_K^{n+1}\right]\\
&\leq&\frac{m_K}{2}\E\left[(u_K^{n+1}-u_K^n)^2\right]+\frac{m_K\Delta t}{2}\E\left[g^2(u_K^n)\right]+\dlt m_K \E\left[\beta(\unpk)\unpk\right].
\end{eqnarray*}
Now, we multiply the last inequality by $e^{-c\tn}$ for arbitrary $c>0$. Then, summing over $K\in \Tau$ and $n\in\{0, ..., k\}$ for $k\in\{0, ..., N-1\}$, using \eqref{PInt} and reasoning as in the proof of \eqref{term2} one gets
\begin{eqnarray*}
&&\frac{1}{2}\sum_{n=0}^{k}\sum_{K\in \Tau}m_Ke^{-c\tn}\E\left[(u_K^{n+1})^2-(u_K^n)^2\right]+\Delta t\sum_{n=0}^{k}e^{-c\tn}\hspace*{-0.4cm}\sum_{\sigma=K|L\in\edgesint}\frac{m_\sigma}{\dkl}\E\left[|u_K^{n+1}-u_L^{n+1}|^2\right]\\
&\leq&\frac{\Delta t}{2}\sum_{n=0}^{k}\sum_{K\in \Tau}m_Ke^{-c\tn}\E\left[g^2(u_K^n)\right]+\Delta t\sum_{n=0}^{k}\sum_{K\in \Tau}m_Ke^{-c\tn}\E\left[\beta(\unpk)\unpk\right].
\end{eqnarray*}
Let us focus on each sum of this last inequality separately. \\
$\bullet$ Note that the general term of the first sum can be decomposed in the following way: 
\begin{eqnarray*}
&&e^{-c\tn}\E\left[(u_K^{n+1})^2-(u_K^n)^2\right]\\
&=&e^{-c\tn}\E\left[(u_K^{n+1})^2\right]-e^{-ct_{n-1}}\E\left[(u_K^{n})^2\right]-\E\left[(u^n_K)^2\right]\left( e^{-c\tn}-e^{-ct_{n-1}}  \right),
\end{eqnarray*}
where $t_{-1}:=-\Delta t$.
Firstly, we have
\begin{align}\label{terme1a}
\begin{split}
&\frac{1}{2}\sum_{n=0}^{k}\sum_{K\in \Tau}m_K \left(e^{-c\tn}\E\left[(u_K^{n+1})^2\right]-e^{-ct_{n-1}}\E\left[(u_K^{n})^2\right]\right)\\
=\,&\frac{1}{2}\sum_{K\in \Tau} m_Ke^{-ct_k}\E\left[(u_K^{k+1})^2\right]-\frac{1}{2}\sum_{K\in \Tau} m_K\E\left[(u_K^{0})^2\right]e^{c\Delta t}.
\end{split}
\end{align}
Secondly, since there exists $\xi\in \big(-c\tn, -ct_{n-1}\big)$ such that $$e^{-c\tn}-e^{-ct_{n-1}}=e^\xi (-c\tn+ct_{n-1})=-c\Delta te^\xi<-c\Delta t e^{-c\tn},$$
the following inequality holds
\begin{eqnarray*}
-\sum_{n=1}^{k}\sum_{K\in \Tau}m_K \E\left[(u^n_K)^2\right]\left( e^{-c\tn}-e^{-ct_{n-1}} \right)>c\Delta t \sum_{n=1}^{k}\sum_{K\in \Tau}m_K e^{-c\tn} \E\left[(u_K^{n})^2\right].
\end{eqnarray*}
In particular, we have since for any $s\in [\tn, \tnp]$, $e^{-cs}\leq e^{-c\tn}$,
\begin{eqnarray*}
c\Delta t \sum_{n=1}^{k}\sum_{K\in \Tau}m_K e^{-c\tn} \E\left[(u_K^{n})^2\right]
&=&c\Delta t \sum_{n=0}^{k-1}\sum_{K\in \Tau}m_K e^{-c\tnp} \E\left[(u_K^{n+1})^2\right]\\
&=&ce^{-c\Delta t}\sum_{n=0}^{k-1}  \sum_{K\in \Tau}m_K \Delta te^{-c\tn}\E\left[(u_K^{n+1})^2\right]\\
&\geq&ce^{-c\Delta t}\sum_{n=0}^{k-1}  \sum_{K\in \Tau}m_K\int_{\tn}^{\tnp}e^{-cs}\E\left[(u_K^{n+1})^2\right]ds\\
&=&ce^{-c\Delta t}\int_0^{t_k}e^{-cs}\E\left[||u^r_{h,N}(s)||^2_{L^2(\Lambda)}\right]ds.
\end{eqnarray*}
In this manner
\begin{align}\label{terme1b}
\begin{split}
&-\frac{1}{2}\sum_{n=0}^{k}\sum_{K\in \Tau}m_K \E\left[(u^n_K)^2\right]\left( e^{-c\tn}-e^{-ct_{n-1}}  \right)\\
>&-\frac{1}{2}\sum_{K\in \Tau}m_K \E\left[(u^0_K)^2\right]\left( 1-e^{c\Delta t}  \right) +\frac{c}{2}e^{-c\Delta t}\int_0^{t_k}e^{-cs}\E\left[||u^r_{h,N}(s)||^2_{L^2(\Lambda)}\right]ds\\
>&\ \frac{c}{2}e^{-c\Delta t}\int_0^{t_k}e^{-cs}\E\left[||u^r_{h,N}(s)||^2_{L^2(\Lambda)}\right]ds.
\end{split}
\end{align}
$\bullet$ The second sum can be handled in the following manner
\begin{align}\label{terme2}
\begin{split}
\Delta t\sum_{n=0}^{k}e^{-c\tn}\sum_{\sigma=K|L\in\edgesint}\frac{m_\sigma}{\dkl}\E\left[|u_K^{n+1}-u_L^{n+1}|^2\right]
=&\ \Delta t\sum_{n=0}^{k}e^{-c\tn}\E[|u_h^{n+1}|^2_{1,h}]\\
\geq&\ \int_{0}^{t_{k+1}}e^{-cs}\E[|u^r_{h,N}(s)|^2_{1,h}]\, ds.
\end{split}
\end{align}
Indeed, since for any $s\in [\tn, \tnp]$, $e^{-cs}\leq e^{-c\tn}$ one gets
\begin{eqnarray*}
\int_{0}^{t_{k+1}}e^{-cs}\E[|u^r_{h,N}(s)|^2_{1,h}]\,ds&=&\sum_{n=0}^k\int_{\tn}^{\tnp}e^{-cs}\E[|u_h^{n+1}|^2_{1,h}]\,ds\\
&\leq&\sum_{n=0}^k \E[|u_h^{n+1}|^2_{1,h}]e^{-c\tn}\int_{\tn}^{\tnp}1\,ds\\
&=&\Delta t\sum_{n=0}^{k}e^{-c\tn}\E[|u_h^{n+1}|^2_{1,h}].
\end{eqnarray*}
$\bullet$ We have the following majoration of the third sum: 
\begin{align}\label{terme3}
\begin{split}
&\frac{\Delta t}{2}\sum_{n=0}^{k}\sum_{K\in \Tau}m_Ke^{-c\tn}\E\left[g^2(u_K^n)\right]\\
\leq\,&\frac{\Delta t}{2}\sum_{K\in \Tau}m_K\E\left[g^2(u_K^0)\right]+\frac{1}{2}\int_{0}^{t_k} e^{-cs}\E\left[||g(u^r_{h,N})(s)||^2_{L^2(\Lambda)}\right]\,ds.
\end{split}
\end{align}
Indeed, 
\begin{eqnarray*}
\frac{\Delta t}{2}\sum_{n=1}^{k}\sum_{K\in \Tau}m_Ke^{-c\tn}\E\left[g^2(u_K^n)\right]
&=&\frac{\Delta t}{2}\sum_{n=0}^{k-1}\sum_{K\in \Tau}m_Ke^{-c\tnp}\E\left[g^2(u_K^{n+1})\right]\\
&\leq&\frac{1}{2}\sum_{n=0}^{k-1}\sum_{K\in \Tau}m_K\int_{\tn}^{\tnp }e^{-cs}\E\left[g^2(u_K^{n+1})\right]\,ds\\
&=&\frac{1}{2}\int_{0}^{t_k} e^{-cs}\E\left[||g(u^r_{h,N})(s)||^2_{L^2(\Lambda)}\right]\,ds.
\end{eqnarray*}
$\bullet$ The last sum can be handled in the following manner: 
\begin{align}\label{terme4}
\begin{split}
&\Delta t\sum_{n=0}^{k}\sum_{K\in \Tau}m_Ke^{-c\tn}\E\left[\beta(\unpk)\unpk\right]\\
\leq\,&c\dlt L_{\beta} ||\uhnr||^2_{L^2\left(\Omega;L^2(0,T;L^2(\Lambda))\right)}+\int_{0}^{t_{k+1}}e^{-cs}\E\left[\int_{\Lambda}\beta(\uhnr(s,x))\uhnr(s,x)\,dx\right]\,ds.
\end{split}
\end{align}
Indeed, 
\begin{align*}
& \left|\Delta t\sum_{n=0}^{k}\sum_{K\in \Tau}m_Ke^{-c\tn}\E\left[\beta(\unpk)\unpk\right]-\int_{0}^{t_{k+1}}e^{-cs}\E\left[\int_{\Lambda}\beta(\uhnr(s,x))\uhnr(s,x)\,dx\right]\,ds\right|\\
=& \left|\sum_{n=0}^{k}\sum_{K\in \Tau}\int_{\tn}^{\tnp}\int_{K}(e^{-c\tn}-e^{-cs})\E\left[\beta(\unpk)\unpk\right]\,dx\,ds\right|\leq c\dlt L_{\beta} ||\uhnr||^2_{L^2\left(\Omega;L^2(0,T;L^2(\Lambda))\right)}.
\end{align*}
Combining \eqref{terme1a}, \eqref{terme1b}, \eqref{terme2}, \eqref{terme3} and \eqref{terme4} one gets
\begin{eqnarray*}
&&\sum_{K\in \Tau} m_Ke^{-ct_k}\E\left[(u_K^{k+1})^2\right]+2\int_{0}^{t_{k+1}}e^{-cs}\E[|u^r_{h,N}(s)|^2_{1,h}]\,ds\\
&\leq&e^{c\Delta t}\sum_{K\in \Tau} m_K\E\left[(u_K^{0})^2\right]+\Delta t\sum_{K\in \Tau}m_K\E\left[g^2(u_K^0)\right]+\int_{0}^{t_k} e^{-cs}\E\left[||g(u^r_{h,N})(s)||^2_{L^2(\Lambda)}\right]ds\\
&&-ce^{-c\Delta t}\int_0^{t_k}e^{-cs}\E\left[||u^r_{h,N}(s)||^2_{L^2(\Lambda)}\right]ds +2c\dlt L_{\beta} ||\uhnr||^2_{L^2\left(\Omega;L^2(0,T;L^2(\Lambda))\right)}\\
&&+2\int_{0}^{t_{k+1}}e^{-cs}\E\left[\int_{\Lambda}\beta(\uhnr(s,x))\uhnr(s,x)\,dx\right]\,ds.
\end{eqnarray*}
For $t\in [t_k, t_{k+1})$ since $e^{-ct}\leq e^{-ct_k}$ and $(t-\Delta t)^+\leq t_k$, one gets
\begin{eqnarray*}
&&e^{-ct}\E\left[||u_{h,N}^r(t)||^2_{L^2(\Lambda)}\right]+2\int_{0}^{t}e^{-cs}\E[|u^r_{h,N}(s)|^2_{1,h}]\,ds\\
&\leq&e^{c\Delta t}\sum_{K\in \Tau} m_K\E\left[(u_K^{0})^2\right]+\Delta t\sum_{K\in \Tau}m_K\E\left[g^2(u_K^0)\right]+\int_{0}^{t} e^{-cs}\E\left[||g(u^r_{h,N})(s)||^2_{L^2(\Lambda)}\right]\,ds\\
&&-ce^{-c\Delta t}\int_0^{(t-\Delta t)^+}e^{-cs}\E\left[||u^r_{h,N}(s)||^2_{L^2(\Lambda)}\right]\,ds +2c\dlt L_{\beta} ||\uhnr||^2_{L^2\left(\Omega;L^2(0,T;L^2(\Lambda))\right)}\\
&&+2\int_{0}^{t}e^{-cs}\E\left[\int_{\Lambda}\beta(\uhnr(s,x))\uhnr(s,x)\,dx\right]\,ds\\
&&+2\int_{t}^{t_{k+1}}e^{-cs}\E\left[\int_{\Lambda}\beta(\uhnr(s,x))\uhnr(s,x)\,dx\right]\,ds
\end{eqnarray*}
Using the fact that $(u^r_{h,N})_{m}$ is bounded in $L^{\infty}\left(0,T;L^2(\Omega;L^2(\Lambda))\right)$ by Proposition~\ref{bounds} one gets 
\begin{align*}
&-ce^{-c\Delta t}\int_{(t-\Delta t)^+}^te^{-cs}\E\left[||u^r_{h,N}(s)||^2_{L^2(\Lambda)}\right]ds\\
&+2\int_{t}^{t_{k+1}}e^{-cs}\E\left[\int_{\Lambda}\beta(\uhnr(s,x))\uhnr(s,x)\,dx\right]\,ds\\
\leq\,&\Delta t (c+2L_{\beta})||u^r_{h,N}||^2_{L^{\infty}\left(0,T;L^2(\Omega;L^2(\Lambda))\right)}
\end{align*}
and so (since $-\int_0^{(t-\Delta t)^+}=-\int_0^t+\int_{(t-\Delta t)^+}^t$)
\begin{align}\label{ineginter}
&e^{-ct}\E\left[||u_{h,N}^r(t)||^2_{L^2(\Lambda)}\right]+2\int_{0}^{t}e^{-cs}\E[|u^r_{h,N}(s)|^2_{1,h}]\,ds\nonumber \\
\leq\,&e^{c\Delta t}\E[||u_0||^2_{L^2(\Lambda)}]+\Delta t L_g^2\E[||u_0||^2_{L^2(\Lambda)}]+\int_{0}^{t} e^{-cs}\E\left[||g(u^r_{h,N})(s)||^2_{L^2(\Lambda)}\right]\,ds\\
&-ce^{-c\Delta t}\int_0^{t}e^{-cs}\E\left[||u^r_{h,N}(s)||^2_{L^2(\Lambda)}\right]\,ds+2c\Delta t L_\beta\|\uhnr\|_{L^2(\Omega;L^2(0,T;L^2(\Lambda)))}^2 \nonumber \\
&+\Delta t  (c+2L_{\beta})||u^r_{h,N}||^2_{L^{\infty}\left(0,T;L^2(\Omega;L^2(\Lambda))\right)}+2\int_{0}^{t}e^{-cs}\E\left[\int_{\Lambda}\beta(\uhnr(s,x))\uhnr(s,x)\,dx\right]\,ds.\nonumber
\end{align}
Moreover,
\begin{align}\label{decompog}
\begin{split}
&\int_{0}^{t} e^{-cs}\E\left[||g(u^r_{h,N})(s)||^2_{L^2(\Lambda)}\right]ds=\int_{0}^{t} e^{-cs}\E\left[||g(u^r_{h,N})(s)-g(u)(s)||^2_{L^2(\Lambda)}\right]ds\\
&+2\int_{0}^{t} e^{-cs}\E\left[\int_{\Lambda}g(u^r_{h,N})(s,x)g(u)(s,x)dx\right]\,ds-\int_{0}^{t} e^{-cs}\E\left[||g(u)(s)||^2_{L^2(\Lambda)}\right]ds.
\end{split}
\end{align}
In the same manner,
\begin{eqnarray}\label{decompou}
&&\hspace*{-1.3cm}-ce^{-c\Delta t}\int_0^{t}e^{-cs}\E\left[||u^r_{h,N}(s)||^2_{L^2(\Lambda)}\right]ds=-ce^{-c\Delta t}\int_{0}^{t} e^{-cs}\E\left[||u^r_{h,N}(s)-u(s)||^2_{L^2(\Lambda)}\right]ds\nonumber\\
&&\hspace*{-1.3cm}-2ce^{-c\Delta t}\int_{0}^{t} e^{-cs}\E\left[\int_{\Lambda}u^r_{h,N}(s,x)u(s,x)\,dx\right]\,ds+ce^{-c\Delta t}\int_{0}^{t} e^{-cs}\E\left[||u(s)||^2_{L^2(\Lambda)}\right]ds.
\end{eqnarray}
At last
\begin{align}\label{decompobeta}
&\int_{0}^{t}e^{-cs}\E\left[\int_{\Lambda}\beta(\uhnr(s,x))\uhnr(s,x)\,dx\right]\,ds=\int_{0}^{t}e^{-cs}\E\left[\int_{\Lambda}\beta(\uhnr(s,x))u(s,x)\,dx\right]\,ds\nonumber\\
&+\int_{0}^{t}e^{-cs}\E\left[\int_{\Lambda}\big(\beta(\uhnr(s,x))-\beta(u(s,x))\big)(\uhnr(s,x)-u(s,x))\,dx\right]\,ds\\
&+\int_{0}^{t}e^{-cs}\E\left[\int_{\Lambda}\beta(u(s,x))(\uhnr(s,x)-u(s,x))\,dx\right]\,ds.\nonumber
\end{align}
Note that there exists $c>0$ depending only on $L_g$ and $L_{\beta}$ such that for any $N$ big enough
\begin{align*}
&\int_{0}^{t} e^{-cs}\E\left[||g(u^r_{h,N}(s))-g(u(s))||^2_{L^2(\Lambda)}\right]ds-ce^{-c\Delta t}\int_{0}^{t} e^{-cs}\E\left[||u^r_{h,N}(s)-u(s)||^2_{L^2(\Lambda)}\right]ds\\
&+2\int_{0}^{t}e^{-cs}\E\left[\int_{\Lambda}\big(\beta(\uhnr(s,x))-\beta(u(s,x))\big)(\uhnr(s,x)-u(s,x))\,dx\right]\,ds\leq 0.
\end{align*}
From now on, we will consider such a choice of $c$.
After injecting \eqref{decompog}, \eqref{decompou} and \eqref{decompobeta} in \eqref{ineginter}, we arrive at
\begin{align*}
&e^{-ct}\E\left[||u_{h,N}^r(t)||^2_{L^2(\Lambda)}\right]+2\int_{0}^{t}e^{-cs}\E[|u^r_{h,N}(s)|^2_{1,h}]\,ds\\
\leq\,&\E[||u_0||^2_{L^2(\Lambda)}]+\Delta t L_g^2\E[||u_0||^2_{L^2(\Lambda)}]+2\int_{0}^{t} e^{-cs}\E\left[\int_{\Lambda}g(u^r_{h,N})(s,x)g(u)(s,x)\,dx\right]\,ds\\
&-\int_{0}^{t} e^{-cs}\E\left[||g(u)(s)||^2_{L^2(\Lambda)}\right]ds-2ce^{-c\Delta t}\int_{0}^{t} e^{-cs}\E\left[\int_{\Lambda}u^r_{h,N}(s,x)u(s,x)\,dx\right]\,ds\\
&+ce^{-c\Delta t}\int_{0}^{t} e^{-cs}\E\left[||u(s)||^2_{L^2(\Lambda)}\right]ds+2c\Delta tL_\beta\|\uhnr\|_{L^2(\Omega;L^2(0,T);L^2(\Lambda)))}^2\\
&+\Delta t (c+2L_{\beta}) ||u^r_{h,N}||^2_{L^{\infty}\left(0,T;L^2(\Omega;L^2(\Lambda))\right)}+\left(e^{c\Delta t}-1\right)\E\left[||u_0||^2_{L^2(\Lambda)}\right]\\
&+2\int_{0}^{t}e^{-cs}\E\left[\int_{\Lambda}\beta(\uhnr(s,x))u(s,x)\,dx\right]\,ds\\
& +2\int_{0}^{t}e^{-cs}\E\left[\int_{\Lambda}\beta(u(s,x))\big(\uhnr(s,x)-u(s,x)\big)\,dx\right]\,ds.
\end{align*}
Then, integrating this last inequality from $0$ to $T$ the following holds
\begin{align*}
&\int_0^Te^{-ct}\E\left[||u_{h,N}^r(t)||^2_{L^2(\Lambda)}\right]\,dt+2\int_0^T\int_{0}^{t}e^{-cs}\E[|u^r_{h,N}(s)|^2_{1,h}]\,ds\,dt\\
\leq\,&\int_0^T\E[||u_0||^2_{L^2(\Lambda)}]\,dt+2\int_0^T\int_{0}^{t} e^{-cs}\E\left[\int_{\Lambda}g(u^r_{h,N})(s,x)g(u)(s,x)\,dx\right]\,ds\,dt\\
&-\int_0^T\int_{0}^{t} e^{-cs}\E\left[||g(u(s))||^2_{L^2(\Lambda)}\right]\,ds\,dt\\
&-2ce^{-c\Delta t}\int_0^T\int_{0}^{t} e^{-cs}\E\left[\int_{\Lambda}u^r_{h,N}(s,x)u(s,x)\,dx\right]\,ds\,dt\\
&+ce^{-c\Delta t}\int_0^T\int_{0}^{t} e^{-cs}\E\left[||u(s)||^2_{L^2(\Lambda)}\right]\,ds\,dt+T\left(e^{c\Delta t}-1\right)\E[||u_0||^2_{L^2(\Lambda)}]\\
&+T\Delta t \Big((c+2L_{\beta})||u^r_{h,N}||^2_{L^{\infty}\left(0,T;L^2(\Omega;L^2(\Lambda))\right)}+L_g^2\E[||u_0||^2_{L^2(\Lambda)}]\\
&+2cL_\beta\|\uhnr\|_{L^2(\Omega;L^2(0,T;L^2(\Lambda)))}^2\Big)\\
&+2\int_{0}^{T}\int_{0}^{t}e^{-cs}\E\left[\int_{\Lambda}\beta(\uhnr(s,x))u(s,x)\,dx\right]\,ds\,dt\\
&+2\int_{0}^{T}\int_{0}^{t}e^{-cs}\E\left[\int_{\Lambda}\beta(u(s,x))\big(\uhnr(s,x)-u(s,x)\big)\,dx\right]\,ds\,dt.
\end{align*}
Firstly, by passing to the superior limit one gets
\begin{align*}
&\limsup_{m\rightarrow+\infty}\int_0^Te^{-ct}\E\left[||u_{h,N}^r(t)||^2_{L^2(\Lambda)}\right]\,dt+2\liminf_{m\rightarrow+\infty}\int_0^T\int_{0}^{t}e^{-cs}\E[|u^r_{h,N}(s)|^2_{1,h}]\,ds\,dt\\
&\leq\int_0^T\E[||u_0||^2_{L^2(\Lambda)}]\,dt-c\int_0^T\int_{0}^{t} e^{-cs}\E\left[||u(s)||^2_{L^2(\Lambda)}\right]\,ds\,dt\\
&+2\int_0^T\int_{0}^{t} e^{-cs}\E\left[\int_{\Lambda}g_u(s,x)g(u)(s,x)\,dx\right]\,ds\,dt-\int_0^T\int_{0}^{t} e^{-cs}\E\left[||g(u(s))||^2_{L^2(\Lambda)}\right]\,ds\,dt\\
&+2\int_{0}^{T}\int_{0}^{t}e^{-cs}\E\left[\int_{\Lambda}\beta_u(s,x)u(s,x)\,dx\right]\,ds\,dt.
\end{align*}
Secondly, thanks to the stochastic energy equality \eqref{energy} one arrives at
\begin{align*}
&\limsup_{m\rightarrow+\infty}\int_0^Te^{-ct}\E\left[||u_{h,N}^r(t)||^2_{L^2(\Lambda)}\right]\,dt+2\liminf_{m\rightarrow+\infty}\int_0^T\int_{0}^{t}e^{-cs}\E[|u^r_{h,N}(s)|^2_{1,h}]\,ds\,dt\\
&\leq\int_0^T    e^{-ct} \E\left[||u(t) ||^2_{L^2(\Lambda)}\right]\,dt +2\int_0^T\int_0^te^{-cs} \E\left[||\nabla u(s) ||^2_{L^2(\Lambda)}\right]\,ds\,dt\\
&-\int_0^T\int_0^t e^{-cs} \E\left[||g_u(s) ||^2_{L^2(\Lambda)}\right]\,ds\,dt+2\int_0^T\int_{0}^{t} e^{-cs}\E\left[\int_{\Lambda}g_u(s,x)g(u(s,x))\,dx\right]\,ds\,dt\\
&-\int_0^T\int_{0}^{t} e^{-cs}\E\left[||g(u(s))||^2_{L^2(\Lambda)}\right]\,ds\,dt,
\end{align*}
which yields to
\begin{eqnarray*}
&&\limsup_{m\rightarrow+\infty}\int_0^Te^{-ct}\E\left[||u_{h,N}^r(t)||^2_{L^2(\Lambda)}\right]\,dt+2\liminf_{m\rightarrow+\infty}\int_0^T\int_{0}^{t}e^{-cs}\E[|u^r_{h,N}(s)|^2_{1,h}]\,ds\,dt\\
&\leq&\int_0^T    e^{-ct} \E\left[||u(t) ||^2_{L^2(\Lambda)}\right]\,dt +2\int_0^T\int_0^te^{-cs} \E\left[||\nabla u(s) ||^2_{L^2(\Lambda)}\right]\,ds\,dt\\
&&-\int_0^T\int_0^t e^{-cs} \E\left[||g(u(s))-g_u(s)||^2_{L^2(\Lambda)}\right]\,ds\,dt.
\end{eqnarray*}
Thirdly, owing to \eqref{liminfinequality} one obtains
\begin{eqnarray*}
&&\limsup_{m\rightarrow+\infty}\int_0^Te^{-ct}\E\left[||u_{h,N}^r(t)||^2_{L^2(\Lambda)}\right]\,dt+\int_0^T\int_0^t e^{-cs} \E\left[||g(u(s))-g_u(s) ||^2_{L^2(\Lambda)}\right]\,ds\,dt
\\
&\leq&\int_0^T    e^{-ct} \E\left[||u(t) ||^2_{L^2(\Lambda)}\right]\,dt.
\end{eqnarray*}
\noindent Note that by weak convergence of $(u_{h,N}^r)_{m}$ towards $u$ in $L^2(\Omega;L^2(0,T;L^2(\Lambda)))$, the following inequality is always true 
$$\int_0^T    e^{-ct} \E\left[||u(t) ||^2_{L^2(\Lambda)}\right]\,dt\leq \liminf_{m\rightarrow+\infty}\int_0^Te^{-ct}\E\left[||u_{h,N}^r(t)||^2_{L^2(\Lambda)}\right]\,dt,$$
and so this allows us to conclude firstly that $g_u=g(u)$, secondly that $(u_{h,N}^r)_m$, $(u_{h,N}^l)_m$ converges strongly to $u$ in $L^2(\Omega;L^2(0,T;L^2(\Lambda)))$ and thirdly that $\beta_u=\beta(u)$. 
At last, thanks to  Vitali's theorem (see \cite{D}), the boundedness  of $(\uhnr)_m$ in $L^{\infty}(0,T; L^2(\Omega\times \Lambda))$ combined with its strong convergence in $L^2(\Omega;L^2(0,T;L^2(\Lambda)))\cong L^{2}(0,T; L^2(\Omega\times \Lambda))$, allow us to conclude that such a convergence finally holds strongly in $L^{p}(0,T; L^2(\Omega\times \Lambda))$ for any finite $p\geq 1$, and then that the limit $u$ is the unique variational solution of Problem \eqref{equation} in the sense of Definition \ref{solution}.
\end{proof}
\quad\\
\textbf{Acknowledgments} The authors would like to thank T. Gallou\"et for
his valuable suggestions. This work has been supported by the German Research Foundation project (ZI 1542/3-1), the Institut de M\'ecanique et d'Ingenierie of Marseille and various Procope programs: Project-Related Personal Exchange France-Germany (49368YE), Procope Mobility Program (DEU-22-0004 LG1) and Procope Plus Project.

%\nocite{*}

\bibliographystyle{plain}
\bibliography{bibfinitevolume2.bib}

\end{document}